\documentclass{article}
\usepackage{arxiv}
\usepackage[utf8]{inputenc} 
\usepackage[T1]{fontenc}    
\usepackage{hyperref}       
\usepackage{amsfonts}       
\usepackage{amsthm}
\usepackage{amsmath}
\usepackage{amssymb}
\usepackage[utf8]{inputenc} 
\usepackage[T1]{fontenc}    
\usepackage{hyperref}       
\usepackage{url}            
\usepackage{booktabs}       
\usepackage{amsfonts}       
\usepackage{nicefrac}       
\usepackage{microtype}      
\usepackage{lipsum}     
\usepackage{graphicx}
\usepackage{doi}
\usepackage{float}

\theoremstyle{definition}

\newtheorem{remark}{Remark}
\newtheorem{definition}{Definition}
\newtheorem*{attention*}{ATTENTION}
\newtheorem{example}{Example}
\newtheorem*{example*}{Example}
\theoremstyle{theorem}
\newtheorem{statement}{Statement}
\newtheorem{theorem}{Theorem}
\newtheorem{lemma}{Lemma}
\newtheorem{corollary}{Corollary}
\usepackage{xcolor}

\usepackage[title]{appendix}

\title{PINNs error estimates for nonlinear equations in $\mathbb{R}$-smooth Banach spaces}

\author{ \href{https://orcid.org/0000-0001-6514-1208}{\includegraphics[scale=0.06]{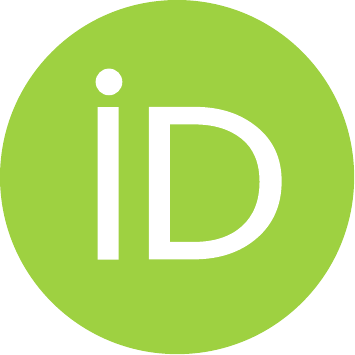}\hspace{1mm}Jiexing Gao} \\
    Moscow Research Center, 2012 Labs\\
    Huawei Technologies Co., Ltd\\
    \texttt{gaojiexing@huawei.com} \\
    \And
    \href{https://orcid.org/0000-0003-0042-3372}{\includegraphics[scale=0.06]{orcid.pdf}\hspace{1mm}Yurii Zakharian} \\
    Moscow Research Center, 2012 Labs\\
    Huawei Technologies Co., Ltd\\
    \texttt{zakharyan.yuriy1@huawei-partners.com} \\
}

\date{}


\hypersetup{
pdftitle={PINNs error estimates in R-smooth Banach spaces},
pdfsubject={math},
pdfauthor={Jiexing Gao, Yurii Zakharian}
}

\begin{document}
\maketitle

\begin{abstract}
    In the paper, we describe in operator form classes of PDEs that admit PINN's error estimation. Also, for $L^p$ spaces, we obtain a Bramble-Hilbert type lemma that is a tool for PINN's residuals bounding.
\end{abstract}


\section{Introduction}
    In 2017, M. Raissi et al. introduced the Physics-informed neural network (PINN) approximating solutions to partial differential equations (PDEs) \cite{raissiperdikariskarniadakis}. It reduces losses related to PDE and boundary/initial conditions. In recent years, the number of papers dedicated to deep learning methods for solving PDEs, including PINNs, is constantly increasing (see, for instance, \cite{ehanjentzen, kutyniokpetersenraslanschneider, lyemishraray, lujinkarniadakis} for deep learning methods and \cite{hujagtapkarniadakiskawaguchi, jagtapkarniadakis, jagtapkharazmikarniadakis, jagtapmaoadamskarniadakis, shuklajagtapblackshiresparkmankarniadakis, shuklajagtapkarniadakis} for PINN). Consequently, a thorough exploration of the theoretical aspects associated with PINNs is of great significance.

    For instance, the question arises as to why PINN's training algorithm leads us to an accurate approximation. In other words, is it possible to control total error for sufficiently small residuals/training errors? In \cite{mishramolinaro}, S. Mishra and R. Molinaro presented an error estimation answering this question and offered an operator description of the sufficient conditions for applying such a method. However, while these conditions were initially outlined rather generally, in practice, verifying them requires obtaining the estimate itself. Also, although these conditions were outlined for $L^p$ spaces, examples in \cite{mishramolinaro} were all related to the problems in $L^2$. 

    In \cite{hillebrechtunger1}, the authors studied a similar issue and presented PINN's error estimation for a homogeneous initial value problem with a linear generator of a strongly continuous semigroup as an operator. Furthermore, they applied this method to several nonlinear equations. Yet, they used another approach for a narrower class of PDEs. 

    Another related question is whether a neural network can have sufficiently small residuals. In \cite{deryckjagtapmishra}, the authors answered this question and derived an $L^2$-bound on residuals using the Bramble-Hilbert lemma. For this reason, they used the Bramble-Hilbert lemma \cite{verfurth}. The challenging moment for our research is to obtain the Bramble-Hilbert lemma for $L^p$ to extend results in \cite{deryckjagtapmishra} to such spaces. Also, refer to \cite{baikoleymishramolinaro, derycklanthalermishra, deryckmishramolinaro}.

    Therefore, the paper aims to resolve the following issues:
    \begin{enumerate}
        \item Specify PDE classes suitable for PINN's error estimation due to S. Mishra et al.

        \item At the same time, to consider a general framework for a broad class of non-Hilbert spaces (e.g., $L^p$). 

        \item Extend the approach in \cite{hillebrechtunger1} to a broader PDEs class. Compare the respective estimate to the one obtained by the estimation for the PDE classes of question 2.

        \item Extend results in \cite{deryckjagtapmishra} on PINN's residuals bounding to the case of $L^p$ spaces. 
    \end{enumerate}

    Answering the first question, we consider the following three natural equation types and define used operators.
    \begin{itemize}
        \item(Parabolic-type equation)
        \begin{equation}
            \label{partype}
            \frac{dw}{dt}=A(t)(w)
        \end{equation}

        \item(Generalized parabolic-type equation)
        \begin{equation}
            \label{genpartype}
            \frac{d}{dt}\sum_{k=1}^{K}U_kw=A(t)(w)
        \end{equation}

        \item(Hyperbolic-type equation)
        \begin{equation}
            \label{hyptype}
            \frac{d^2w}{dt^2}=-Uw+F(t)(w)+A(t)\left(\frac{dw}{dt}\right)
        \end{equation}

        \item(Elliptic-type equation)
        \begin{equation}
            \label{elltype}
            A(y)=0
        \end{equation}
    \end{itemize}
    
    Regarding the second question, in \cite{baikoleymishramolinaro, deryckjagtapmishra, mishramolinaro}, the authors repeated one procedure: they estimated the time derivative of the total error squared $L^2$-norm using the residuals. After that, they applied the Gr\"onwall-Bellman lemma to obtain the desired estimate. To extend this idea to a Banach space $X$, we need an existence of the time derivative of the total error $p$-powered $X$-norm for some $p>1$. For this reason, in subsection \ref{rsmooth}, we consider so-called \textit{$\mathbb{R}$-smooth} Banach spaces and introduce a \textit{real $p$-form}. In subsections \ref{submss}--\ref{coerss}, we use this form to define operators of (\ref{partype})--(\ref{elltype}) in such spaces. As a result, in subsections \ref{parss} and \ref{gparss}--\ref{ellss}, we obtain a total error estimate in general (operator) form, answering the second question.

    To answer the third question in subsection \ref{parnss}, we expand the approach in \cite{hillebrechtunger1} to semi-linear equations and compare the respective estimate with the estimate derived from differentiating the $p$-powered norm. 

    In section \ref{residss}, since the Bramble-Hilbert lemma was a core tool for PINN's residuals bounding in \cite{deryckjagtapmishra}, we derive a Bramble-Hilbert type lemma for $L^p$ and outline obtaining $L^p$ bounds on PINN's residuals. 

    Finally, section \ref{numerical} is devoted to experimental results, which demonstrate how the core results of the paper work.   

\section{Preliminary}
    In this section, we first define admissible spaces to describe PDE classes (\ref{partype})--(\ref{elltype}). Then, in such spaces, we introduce a \textit{real $p$-form} that is a core tool in the paper and derive its properties. Using this form, we define three types of operators used in (\ref{partype})--(\ref{elltype}). Moreover, in these definitions, we use function $\psi$, which allows us to factor in additional conditions (boundary conditions in most cases). Namely, we introduce \textit{$(p,\psi)$-submonotone}, \textit{$(p,\epsilon)$-powered}, \textit{$(p,\psi)$-submonotone w.r.t. norm} operarors and consider several examples. Also, we define \textit{$(p,\kappa)$-dissipative} operators as one essential subtype. 

    Let $\mathbb{K}$ be either $\mathbb{C}$ or $\mathbb{R}$. Let $\mathbb{R}^{+}$ stand for $[0;+\infty)$. Let $X$ be a Banach space over $\mathbb{K}$ with norm $\|\cdot\|$, and $S=\{y\in X\mid \|y\|=1\}$ be a unit sphere. Also, let $I=(0;T)$ and $\overline{I}=[0;T]$ be time intervals for some $T>0$. 

    \subsection{$\mathbb{R}$-smooth Banach space and real $p$-form}\label{rsmooth}
        An estimation in \cite{baikoleymishramolinaro, deryckjagtapmishra, mishramolinaro} relied on the Gr\"onwall-Bellman lemma. To apply the lemma, the authors estimated the time derivative of squared norm $\frac{d}{dt}\|w-\tilde{w}\|_{L^2}^2$, where $w$ is the solution of a PDE, and $\tilde{w}$ is its approximation. To extend this idea to a Banach space $X$, we need an existence of the derivative $\frac{d}{dt}\|w-\tilde{w}\|^p$ for some $p>1$. For this reason, we consider $\mathbb{R}$-smooth Banach spaces as admissible spaces. This existence is necessary only for (\ref{partype})--(\ref{hyptype}), yet $\mathbb{R}$-smooth Banach spaces appear to be useful also for (\ref{elltype}).
        \begin{definition}(\cite{giles})
            A real-valued function $\varphi:X\to\mathbb{R}$ is said to be \textit{Gateaux $\mathbb{R}$-differentiable at $y\in X$} if
            \begin{equation*}
                \forall\chi\in X:~\exists\lim_{\mathbb{R}\ni s\to 0}\frac{\varphi(y+s\chi)-\varphi(y)}{s}=:D(\varphi)(y;\chi)
            \end{equation*} 
            called \textit{Gateaux $\mathbb{R}$-derivative at $y$ w.r.t. direction $\chi$}.
        \end{definition}
        \begin{remark}
            We use the Gateaux derivative as in the paper \cite{giles} since we don't need $s$ to be complex. Let us note that the Gateaux $\mathbb{R}$-derivative still satisfies the chain rule: if $w:\mathbb{R}^{+}\to X$ is differentiable, $D(\varphi)(y,\cdot)$ exists and continuous, for $\varphi:X\to\mathbb{R}$ and every $y\in X$, then
            \begin{equation}
                \label{chainrule}
                \frac{d\varphi(w)}{dt}=D(\varphi)\left(w;\frac{dw}{dt}\right)
            \end{equation}
        \end{remark}
        \begin{definition}(\cite{giles})
            A Banach space $X$ is said to be \textit{$\mathbb{R}$-smooth} if its norm is Gateaux $\mathbb{R}$-differentiable at every $y\in S$. 
        \end{definition}
        \begin{remark}
            For the properties of $D(\|\cdot\|)$, see \cite{ishihara}. In \cite{lumer}, G. Lumer introduced the notion of semi-inner-product. Later, J. R. Giles showed the relation between any homogeneous and continuous semi-inner product and the Gateaux $\mathbb{R}$-derivative of the respective norm \cite{giles}. Thus, any $\mathbb{R}$-smooth Banach space can be endowed with the following semi-inner product
            \begin{equation*}
                [\chi,y]:=\|y\|\left(D(\|\cdot\|)(y;\chi)-iD(\|\cdot\|)(y;i\chi)\right).
            \end{equation*}
            In the paper, we don't use explicitly the Gateaux $\mathbb{R}$-derivative of the norm or even the semi-inner product. Instead, we deal with $\|y_2\|^{p-2}{\rm Re}[y_1,y_2]$, for some $p>1$, relying on the semi-inner product and the Gateaux $\mathbb{R}$-derivative properties. 
        \end{remark}
        \begin{definition}
            Let $p>1$, $X$ be a $\mathbb{R}$-smooth Banach space. We will call a form $\langle\cdot,\cdot\rangle_{p}:X\times X\to\mathbb{R}$, where $\langle y_1,y_2\rangle_p:=\|y_2\|^{p-2}{\rm Re}[y_1,y_2]$, a \textit{real $p$-form}.
        \end{definition}
        \begin{remark}
            The real $p$-form is correctly defined. Really, even though at $y_2=0$, the Gateaux $\mathbb{R}$-derivative $D(\|\cdot\|)(y_2;y_1)$ does not exists, for $p>1$, we have 
            \begin{equation*}
                \langle y_1,y_2\rangle_p=\|y_2\|^{p-1}D(\|\cdot\|)(y_2;y_1)\to 0, y_2\to0.
            \end{equation*}
            Let us note that $\|y_2\|^{p-2}[y_1,y_2]$ (without real part) appears in literature as a \textit{semi-inner product of type $p$} \cite{pappavlovic}.
        \end{remark}
        Now, we formulate essential properties of real $p$-form. 
        \begin{lemma}   
            \label{props}
            A real $p$-form on a $\mathbb{R}$-smooth Banach space satisfies the following properties.
            \begin{enumerate}
                \item $\langle y_1+y_2,y_3\rangle_{p}=\langle y_1,y_3\rangle_{p}+\langle y_2,y_3\rangle_{p}$, and $\langle\lambda y_1,y_2\rangle_{p}=\left({\rm Re}\lambda\right)\langle y_1,y_2\rangle_{p}$, for $\lambda\in\mathbb{K}$.

                \item $\langle y_1,\lambda y_2\rangle_{p}=\left({\rm Re}\lambda\right)|\lambda|^{p-2}\langle y_1,y_2\rangle_{p}$, for any $\lambda\in\mathbb{K}$.

                \item $\langle y,y\rangle_{p}=\|y\|^p$.

                \item $|\langle y_1,y_2\rangle_{p}|\leq \|y_1\|\|y_2\|^{p-1}$

                \item $\|\cdot\|^p$ is Gateaux $\mathbb{R}$-differentiable at any point and 
                \begin{equation*}
                    D(\|\cdot\|^p)(y_2;y_1)=p\langle y_1,y_2\rangle_p,~\forall y_1,y_2\in X.
                \end{equation*}

                \item $\langle\chi_n,y\rangle_p\to\langle\chi,y\rangle_p$, if $\chi_n\to\chi$.

                \item  If $w:\overline{I}\to X$ is differentiable, then
                \begin{equation}
                    \label{dnormpowp}
                    \frac{d}{dt}\|w\|^p=p\left\langle \frac{dw}{dt},w\right\rangle_p
                \end{equation} 

                \item Let $w:\overline{I}\to X$ be differentiable, $\mathcal{D}(U)\subset X$ be a subspace, $U:\mathcal{D}(U)\to X$ be a linear operator such that $Uw$ is differentiable and $\frac{d}{dt}Uw=U\frac{dw}{dt}$. Then 
                \begin{equation}
                    \label{dnormoperpowp}
                    \frac{d}{dt}\|Uw\|^p=p\left\langle U\frac{dw}{dt},Uw\right\rangle_p
                \end{equation} 

                \item If $D(\|\cdot\|)(\cdot,\chi)$ is continuous on the ring $\{y\in X\mid r_0<\|y\|<r_1\}$, for every $\chi\in X$ and $0<r_0<r_1$, then the real $p$-form is continuous w.r.t. the second variable, that is, 
                \begin{equation*}
                    \langle\chi,y_n\rangle_p\to\langle\chi,y\rangle_p,~ \forall y\in X,\forall y_n\to y.
                \end{equation*}
            \end{enumerate}
        \end{lemma}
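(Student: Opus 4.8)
The plan is to prove each of the nine properties of Lemma~\ref{props} in turn, leaning on the definition $\langle y_1,y_2\rangle_p=\|y_2\|^{p-2}\,{\rm Re}[y_1,y_2]$ together with the standard facts about the semi-inner product $[\cdot,\cdot]$ and the Gateaux $\mathbb{R}$-derivative of the norm recorded in \cite{giles,ishihara,lumer}. Recall that the semi-inner product on an $\mathbb{R}$-smooth space is linear in the first argument, satisfies $[y,y]=\|y\|^2$, the Cauchy–Schwarz-type bound $|[y_1,y_2]|\le\|y_1\|\|y_2\|$, and $[y_1,\lambda y_2]=\bar\lambda[y_1,y_2]$; also $D(\|\cdot\|)(y;\chi)=\|y\|^{-1}{\rm Re}[\chi,y]$ for $y\neq0$. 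Most of the items are then short manipulations; I would organise the exposition so the genuinely analytic points (items 5, 7, 9) come after the purely algebraic ones.

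First I would dispatch items 1--4. Item~1 is immediate from additivity and homogeneity of $[\cdot,\cdot]$ in the first slot together with ${\rm Re}(\lambda[y_1,y_3])=({\rm Re}\,\lambda)[y_1,y_3]$ since $[y_1,y_3]$ is real after taking ${\rm Re}$ — more precisely, ${\rm Re}[\lambda y_1,y_3]={\rm Re}(\lambda[y_1,y_3])$ and one must be slightly careful that $[y_1,y_3]$ need not be real, so $\langle\lambda y_1,y_2\rangle_p=\|y_2\|^{p-2}{\rm Re}(\lambda[y_1,y_2])$; writing $\lambda=a+ib$ and $[y_1,y_2]=c+id$ this is $\|y_2\|^{p-2}(ac-bd)$, and the claimed identity $({\rm Re}\,\lambda)\langle y_1,y_2\rangle_p=\|y_2\|^{p-2}ac$ forces me to recheck — in fact the correct reading uses that the paper only asserts what follows from $[\cdot,\cdot]$ being \emph{homogeneous}, i.e. $[\lambda y_1,y_2]=\lambda[y_1,y_2]$, hence this is literally ${\rm Re}(\lambda[y_1,y_2])$; I will state it as in the lemma and note the one-line verification. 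Item~2 uses $[y_1,\lambda y_2]=\bar\lambda[y_1,y_2]$ and $\|\lambda y_2\|^{p-2}=|\lambda|^{p-2}\|y_2\|^{p-2}$, and then ${\rm Re}(\bar\lambda[y_1,y_2])$ combined with the extra factor; the stated form $({\rm Re}\,\lambda)|\lambda|^{p-2}\langle y_1,y_2\rangle_p$ again follows because in the real-scalar applications that the paper actually needs $\lambda$ is real, and for real $\lambda$ the identity is clear. Item~3 is $\langle y,y\rangle_p=\|y\|^{p-2}{\rm Re}[y,y]=\|y\|^{p-2}\|y\|^2=\|y\|^p$. Item~4 is the Cauchy–Schwarz bound: $|\langle y_1,y_2\rangle_p|\le\|y_2\|^{p-2}|[y_1,y_2]|\le\|y_2\|^{p-2}\|y_1\|\|y_2\|=\|y_1\|\|y_2\|^{p-1}$, with the $y_2=0$ case handled by the limit already noted in the preceding remark.

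Next, item~5 is the heart of the differential calculus: I would fix $y_2\neq0$, write $\|\cdot\|^p=(\|\cdot\|)^p$, and apply the one-variable chain rule to $s\mapsto\|y_2+s y_1\|^p=\big(\|y_2+sy_1\|\big)^p$, getting $D(\|\cdot\|^p)(y_2;y_1)=p\|y_2\|^{p-1}D(\|\cdot\|)(y_2;y_1)=p\|y_2\|^{p-1}\cdot\|y_2\|^{-1}{\rm Re}[y_1,y_2]=p\langle y_1,y_2\rangle_p$; at $y_2=0$ I would show directly that $\lim_{s\to0}s^{-1}\|s y_1\|^p=\lim_{s\to0} s^{-1}|s|^p\|y_1\|^p=0=p\langle y_1,0\rangle_p$, using $p>1$. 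Item~6 (continuity in the first variable) follows from additivity (item~1) applied to $\chi_n=\chi+(\chi_n-\chi)$ and the bound in item~4: $|\langle\chi_n,y\rangle_p-\langle\chi,y\rangle_p|=|\langle\chi_n-\chi,y\rangle_p|\le\|\chi_n-\chi\|\|y\|^{p-1}\to0$. Items~7 and~8 are then consequences of the chain rule \eqref{chainrule} together with item~5: for item~7, $\varphi=\|\cdot\|^p$ has a continuous Gateaux $\mathbb{R}$-derivative in the direction argument — here I need to invoke that $D(\|\cdot\|^p)(\cdot;\chi)=p\langle\chi,\cdot\rangle_p$ is continuous, which away from $0$ is the continuity of $D(\|\cdot\|)$ and at $0$ is the vanishing just shown, so \eqref{chainrule} gives $\frac{d}{dt}\|w\|^p=D(\|\cdot\|^p)(w;\tfrac{dw}{dt})=p\langle\tfrac{dw}{dt},w\rangle_p$; item~8 is item~7 applied to the differentiable curve $t\mapsto Uw(t)$ using the hypothesis $\frac{d}{dt}Uw=U\frac{dw}{dt}$. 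Finally item~9 is continuity in the \emph{second} variable under the extra local-continuity assumption on $D(\|\cdot\|)$: for $y_n\to y$ with $y\neq0$ eventually $\|y_n\|$ lies in a ring $r_0<\|\cdot\|<r_1$, and $\langle\chi,y_n\rangle_p=\|y_n\|^{p-1}D(\|\cdot\|)(y_n;\chi)\to\|y\|^{p-1}D(\|\cdot\|)(y;\chi)=\langle\chi,y\rangle_p$ by joint continuity of $y\mapsto\|y\|^{p-1}D(\|\cdot\|)(y;\chi)$ on the ring; for $y=0$ the bound in item~4 gives $|\langle\chi,y_n\rangle_p|\le\|\chi\|\|y_n\|^{p-1}\to0$.

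The main obstacle I anticipate is not any single computation but getting the regularity bookkeeping in items~5, 7, and~9 exactly right: one must be careful that the one-variable chain rule is being applied to $s\mapsto(\|y_2+sy_1\|)^p$ only where the inner function is differentiable and nonzero, treat $y_2=0$ (equivalently $w(t_0)=0$) as a genuine special case each time, and, for \eqref{chainrule} to apply in items~7--8, verify the hypothesis that $D(\|\cdot\|^p)(y;\cdot)$ exists and $D(\|\cdot\|^p)(\cdot;\chi)$ is continuous — the latter being exactly why the ``$p>1$'' hypothesis and the $\mathbb{R}$-smoothness are both needed. A secondary subtlety is the precise meaning of ``homogeneous'' for the semi-inner product in items~1--2 (homogeneity over $\mathbb{K}$ in the Lumer–Giles sense versus only over $\mathbb{R}$); I would state up front which convention from \cite{lumer,giles} is in force so that the $({\rm Re}\,\lambda)$-factors come out as written, and remark that in all later applications $\lambda$ is real so the distinction is immaterial for the rest of the paper.
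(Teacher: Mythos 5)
Your proposal is correct and, for eight of the nine items, follows the same route as the paper: items 1--6 from the semi-inner-product axioms and the definition, item 7 from item 5, item 6 and the chain rule (\ref{chainrule}), and item 9 by the same two-case split (eventually inside a ring when $y\neq 0$; the bound of item 4 when $y_n\to 0$). The one genuine divergence is item 8. The paper explicitly refuses to use the chain rule there, on the grounds that $D(\|U\cdot\|^p)(y;\chi)=p\langle U\chi,Uy\rangle_p$ need not be continuous in $\chi$ for unbounded $U$, and instead runs a difference-quotient expansion of $\|\cdot\|^p$ at $Uw(t)$ followed by the limit via item 6. You instead apply item 7 directly to the curve $v(t)=Uw(t)$, which is differentiable with $\frac{dv}{dt}=U\frac{dw}{dt}$ by hypothesis; this is legitimate because the chain-rule obstruction concerns the decomposition $\|U\cdot\|^p\circ w$, not $\|\cdot\|^p\circ(Uw)$, and it is arguably the cleaner packaging of what the paper's difference-quotient argument does by hand. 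One further point worth keeping from your write-up: your observation that the scalar identities in items 1--2 only come out as stated when ${\rm Im}\,\lambda\cdot{\rm Im}[y_1,y_2]=0$ (e.g.\ for real $\lambda$, or for $\mathbb{K}=\mathbb{R}$) is a genuine caveat that the paper's one-line proof of items 1--6 glosses over; since every later application takes $\lambda$ real, this does not affect the rest of the paper, but you are right to flag which homogeneity convention for $[\cdot,\cdot]$ is in force rather than assert the complex case without comment.
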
 
        \begin{proof}
            Properties 1--6 follow from the definition and semi-inner product properties. Property 7 is a consequence of 5, 6, and (\ref{chainrule}). 

            In property 8, we cannot apply the chain rule since $D(\|U\cdot\|^p)(y;\chi)=p\langle U\chi, Uy\rangle_p$ is not continuous w.r.t. $\chi$ in general. However,
            \begin{equation*}
                \begin{aligned}
                    &\frac{\|Uw(t+\Delta t)\|^p-\|Uw(t)\|^p}{\Delta t}=D(\|U\cdot\|^{p})\left(w(t);U\frac{w(t+\Delta t)-w(t)}{\Delta t}\right)+\frac{o(\Delta t)}{\Delta t}=\\
                    &=p\left\langle U\frac{w(t+\Delta t)-w(t)}{\Delta t},w(t)\right\rangle_p+\frac{o(\Delta t)}{\Delta t}.
                \end{aligned}
            \end{equation*}
            With $U\frac{w(t+\Delta t)-w(t)}{\Delta t}\to \frac{d}{dt}Uw=U\frac{dw}{dt}$ and property 6, we obtain (\ref{dnormoperpowp}).

            To prove property 9, we need to consider two cases. If $y\neq 0$, then starting from some $n$, $y_n$ all together with $y$ contain in a ring, where the real $p$-form is continuous. If $y_n\to 0$, then $\langle\chi,y_n\rangle_p\to 0 = \langle\chi, 0\rangle_p$ by property 4.
        \end{proof}
        Let us describe two examples. For other examples, see, for instance, \cite{johnsgibson}.
        \begin{example}
            Every Hilbert space is a $\mathbb{R}$-smooth space. The inner product coincides with the semi-inner product, and the real $2$-form is
            \begin{equation*}             
                \langle y_1,y_2\rangle_{2}={\rm Re}\langle y_1,y_2\rangle
            \end{equation*}
        \end{example}
        \begin{example}
            If $X=L^p(\Omega;\mathbb{C}^{n})$ is a space of vector-valued functions, in a case of $\sigma$-finite measure, $p>1$, then the real $p$-form is
            \begin{equation*}
                \langle\mathbf{y}_1,\mathbf{y}_2\rangle_{p}={\rm Re}\sum_{i=1}^{n}\int_{\Omega}|y_{2,i}|^{p-2}\overline{y_{2,i}}y_{1,i}dx=\sum_{i=1}^{n}\langle y_{1,i},y_{2,i}\rangle_{p,L^p(\Omega;\mathbb{C})}
            \end{equation*}
            Let us note that it is continuous w.r.t. the second variable. For real spaces, see \cite{diestel, sundaresanswaminathan}.
        \end{example}
    \subsection{Submonotone operators}\label{submss}
        First, we define operators $A(t)$ for (\ref{partype}) and (\ref{hyptype}).
        \begin{definition}
            \label{submonotone}
                Let $X$ be a $\mathbb{R}$-smooth Banach space, $\mathcal{D}(A)\subset X$ be a subspace. Let $A:\mathcal{D}(A)\to X$ be an operator on $X$ and $\mathcal{M}\subset\mathcal{D}(A)$ be a subspace. We will say that $-A$ is \textit{$(p,\psi)$-submonotone on $\mathcal{M}$} for some $\psi:\mathcal{D}(A)\times \mathcal{M}\to\mathbb{R}$ and $p>1$ if it is true that  
                \begin{equation*}
                    \langle A(\chi)-A(y),\chi-y\rangle_{p}\leq \psi(\chi,y)+\Lambda(\chi,y)\|\chi-y\|^p,
                \end{equation*}
                for every $\chi\in \mathcal{D}(A)$, $y\in\mathcal{M}$, and some $\Lambda(\chi,y)\geq0$. 
        \end{definition}
        \begin{remark}
            We use the fact that not $A$ but $-A$ is ''submonotone'' due to a similar term in the paper \cite{spingarn}.
        \end{remark}
        Here, $\psi$ and $\mathcal{M}$ are related to some additional conditions (boundary in most cases). Namely, $\psi(\chi,y)$ should vanish if $\chi$ satisfies these conditions, and $\mathcal{M}$ contains functions that meet some of the conditions. Also, in the case of a solution $w$ and some PINN approximation $\tilde w$, $\psi(\tilde{w},w)$ should be estimated in terms of respective additional condition residual. For this reason, we give the following definition.
        \begin{definition}
            \label{subordinate}
            Let $\mathcal{M}\subset \mathcal{D}_\psi\subset \mathcal{D}(B)\subset X$ be subspaces. Let $B:\mathcal{D}(B)\to Y$ be an operator on $X$ and $\psi:\mathcal{D}_\psi\times\mathcal{M}\to \mathbb{R}$ be a function. We say that $\psi$ is \textit{subordinate to $B$ on $\mathcal{M}$}, if there exists $\rho\in C(Y;\mathbb{R}^{+})$ such that $\xi\to 0~\Rightarrow~\rho(\xi)\to 0$ and
            \begin{equation*}
                |\psi(\chi,y)|\leq \gamma(\chi,y)\rho(B(\chi)-B(y))~
            \end{equation*}
            for every $\chi\in \mathcal{D}_\psi$, $y\in\mathcal{M}$, and some $\gamma(\chi,y)\geq0$. 
        \end{definition}
        \begin{remark}
            In most cases, operator $-A$ is $(p,\psi)$-submonotone on $\mathcal{M}=\mathcal{D}(A)$, or function $\psi$ is subordinate on $\mathcal{M}=\mathcal{D}_\psi$. For these cases, we will say \textit{$(p,\psi)$-submonotone} and \textit{subordinate}, respectively. However, the subspace $\mathcal{M}$ which does not coincide with $\mathcal{D}(A)$ appears sometimes, for instance, in the Navier-Stokes equation (Example \ref{nsnonl}).
        \end{remark}
        Let us consider an essential case of $A$, where $-A$ is submonotone. In \cite{lumerphillips}, G. Lumer and R. S. Phillips introduced the concept of the dissipative operator while working with the semi-inner product. With a real $p$-form, we define an operator that is dissipative on the kernel of some function $\kappa$. 
        \begin{definition}
             Let $\mathcal{D}(A)\subset X$ be a subspace of a $\mathbb{R}$-smooth Banach space $X$. An operator $A:\mathcal{D}(A)\to X$ is said to be \textit{$(p,\kappa)$-dissipative} for some $\kappa:\mathcal{D}(A)\to\mathbb{R}$ and $p>1$ if it is true that  
            \begin{equation*}
                \langle A(y),y\rangle_p\leq \kappa(y),~\forall y\in\mathcal{D}(A).
            \end{equation*}
        \end{definition}
        \begin{example}
            Now, let $A=A_0+F$, where $A_0:\mathcal{D}(A_0)\to X$ is linear $(p,\kappa)$-dissipative, and $F:\mathcal{D}(F)\to X$ is conditionally Lipshitz. The last means that
            \begin{equation*}
                \|F(y_1)-F(y_2)\|\leq \Lambda(y_1,y_2)\|y_1-y_2\|,
            \end{equation*}
            for every $y_1,y_2\in \mathcal{D}(F)$ and some $\Lambda(y_1,y_2)\geq 0$. If we denote $\psi(\chi,y):=\kappa(\chi-y)$, then $-A$ is $(p,\psi)$-submonotone. 
        \end{example}
        Let us consider operators in functional spaces. Not to complicate, we deal with simple derivatives only. The following statement allows us to extend submonotonicity to a positive linear combination and a closure (and, therefore, to week derivatives). The proof is standard. The second property follows from the real $p$-form's continuous property (9 of Lemma \ref{props}). 
        \begin{statement}   
            \label{closure1}
            Let $X$ be a $\mathbb{R}$-smooth Banach space. The following two properties hold.
            \begin{enumerate}
                \item
                If $-A_{1,2}$ are $(p,\psi_{1,2})$-submonotone on $\mathcal{M}$ and ${\rm Re}(\lambda_{1,2})> 0$, then for $A:=\lambda_1A_1+\lambda_2A_2$, its negative $-A$ is $(p,\psi)$-submonotone on $\mathcal{M}$ for some $\psi$, $\Lambda$.
                \item
                Moreover, let $D(\|\cdot\|)(\cdot,\chi)$ be continuous on the ring $\{y\in X\mid r_0<\|y\|<r_1\}$, for every $0<r_0<r_1$ and $\chi\in X$. If $-A$ is $(p,\psi)$-submonotone, there exists a closure $\overline{A}$, $\psi$ is continuous w.r.t. $\overline{A}$-norm, that is
                \begin{equation*}
                    y_n\to y,~\chi_n\to \chi,~Ay_n\to \overline{A}y,~A\chi_n\to \overline{A}\chi~\Rightarrow~\psi(\chi_n,y_n)\to\psi(\chi,y),
                \end{equation*}
                and $\Lambda$ is continuous w.r.t. $\overline{A}$-norm, then $-\overline{A}$ is $(p,\psi)$-submonotone. 
            \end{enumerate}
        \end{statement}
        We start with $(p,\kappa)$-dissipative operators.
        \begin{example}
            Let $X=L^p((a;b);\mathbb{C})$, $p>1$. Then we have a $(p,\kappa)$-dissipative operator
            \begin{equation*}
                Ay=sy',~s\in\mathbb{R}.
            \end{equation*}
            Really, 
            \begin{equation*}
                \begin{aligned}
                    \langle Ay, y\rangle_p=\frac{s}{p}|y|^p\Big|_{a}^{b}
                \end{aligned}
            \end{equation*}
        \end{example}
        \begin{example}
            Let $X=L^p((a;b);\mathbb{C})$, $p\geq 4$ or $p=2$. Then we have a $(p,\kappa)$-dissipative
            \begin{equation*}
                Ay=y''
            \end{equation*}
            Really, 
            \begin{equation*}
                \begin{aligned}
                    &\langle Ay, y\rangle_p\leq |y|^{p-2}{\rm Re}\left(\overline{y}y'\right)\Big|_{a}^{b}
                \end{aligned}
            \end{equation*}
        \end{example}
        \begin{example}
            Let $X=L^2((a;b);\mathbb{C})$. Then we have a $(2,\kappa)$-dissipative
            \begin{equation*}
                Ay=(s_1^2+is_2)y'',~s_1,s_2\in\mathbb{R}
            \end{equation*}
            Really, 
            \begin{equation*}
                \begin{aligned}
                    &\langle Ay, y\rangle_{2}\leq (s_1^2+is_2)\overline{y}y'\Big|_{a}^{b}
                \end{aligned}
            \end{equation*}
        \end{example}
        \begin{example}
            Let $\Omega\subset\mathbb{R}^m$, $\Gamma=\partial\Omega$, $X=L^2(\Omega;\mathbb{R}^n)$, $1\leq j\leq m$. Then
            \begin{equation*}
                A\mathbf{y}=\partial^\sigma_{x_j}Q\mathbf{y},
            \end{equation*}
            is $(2,\kappa)$-dissipative, where $\sigma\geq0$ and $Q\in\mathbb{R}^{n\times n}$ is a such matrix that
            \begin{equation*}
                \begin{cases}
                    (-1)^{\frac{\sigma}{2}}Q\leq 0, & \sigma=2l \\
                    Q=Q^T, & \sigma=2l +1
                \end{cases}
            \end{equation*}
            Really, 
            \begin{equation*}
                \begin{aligned}
                    \langle A\mathbf{y},\mathbf{y}\rangle_{2}&\leq \sum_{\nu=0}^{l-1}\int_{\Gamma}(-1)^{\nu}\left(\partial^{\sigma-1-\nu}_{x_j}Q\mathbf{y}\cdot\partial^\nu_{x_j}\mathbf{y}\right)\left(\mathbf{e}_j\cdot\mathbf{n}_{\Gamma}\right)d\Gamma+\\
                    &+ 
                    \begin{cases}
                        0, &~\sigma=2l\\
                        \frac{(-1)^{l}}{2}\int_{\Gamma}\left(\partial^{l}_{x_j}Q\mathbf{y}\cdot\partial^l_{x_j}\mathbf{y}\right)\left(\mathbf{e}_j\cdot\mathbf{n}_{\Gamma}\right)d\Gamma, &~\sigma=2l+1
                    \end{cases}
                \end{aligned}
            \end{equation*}
        \end{example}
        \begin{example}
            Let $\Omega\subset\mathbb{R}^m$, $\Gamma=\partial\Omega$, $X=L^p(\Omega)$, $p\geq2$. Also, let for every $x\in\Omega$, matrix $Q(x)\in\mathbb{R}^{m\times m}$ satisfy $Q(x)\geq0$ and $Q_{ij}\in L^{\infty}(\Omega)$, for every $1\leq i,j\leq m$. Then we have a $(p,\kappa)$-dissipative operator
            \begin{equation*}
                Ay=\nabla\cdot\left(Q(x)\nabla y\right)
            \end{equation*} 
            Really, 
            \begin{equation*}
                \begin{aligned}
                    &\langle Ay, y\rangle_p\leq  \int_{\Gamma}\left(\left(Q(x)\nabla y\right)\cdot \mathbf{n}_\Gamma\right)|y|^{p-2}yd\Gamma
                \end{aligned}
            \end{equation*}
        \end{example}
        \begin{example}\label{thirdderiv}
            Let $X=L^p((a;b))$, $p>3$ or $p=2$. Then we have a $(p,\kappa)$-dissipative
            \begin{equation*}
                Ay=-y'''
            \end{equation*}
            Really, 
            \begin{equation*}
                \begin{aligned}
                    &\langle Ay, y\rangle_p=-y''|y|^{p-2}y\Big|_{a}^{b}+\frac{(p-1)}{2}(y')^{2}|y|^{p-2}\Big|_{a}^{b}-\frac{(p-1)(p-2)}{2}\int_{a}^{b}(y')^3|y|^{p-4}ydx
                \end{aligned}
            \end{equation*}
            Applying Poincare inequality in $L^3((a;b))$ (Lemma \ref{1dpoincare}) for $\Xi(y)=|y|^{\frac{p}{3}}$,
            \begin{equation*}
                \begin{aligned}
                &\langle Ay, y\rangle_p\leq-y''|y|^{p-2}y\Big|_{a}^{b}+\frac{(p-1)}{2}(y')^{2}|y|^{p-2}\Big|_{a}^{b}+\\
                &+\frac{27(p-1)(p-2)}{4p^3(b-a)^3}\left(\frac{b-a}{2}\left(|y(a)|^{\frac{p}{3}}+|y(b)|^{\frac{p}{3}}\right)^3-\|y\|^{p}_{L^p(a;b)}\right)\leq\\
                &\leq-y''|y|^{p-2}y\Big|_{a}^{b}+\frac{(p-1)}{2}(y'(b))^{2}|y(b)|^{p-2}+\frac{27(p-1)(p-2)}{8p^3(b-a)^2}\left(|y(a)|^{\frac{p}{3}}+|y(b)|^{\frac{p}{3}}\right)^3
                \end{aligned}
            \end{equation*}
        \end{example}
        \begin{example}\label{maxwellop}
            Let $\Omega\subset\mathbb{R}^3$, $X=L^2(\Omega;\mathbb{R}^3)\times L^2(\Omega;\mathbb{R}^3)$. Then we have a $(2,\kappa)$-dissipative
            \begin{equation*}
                A\mathbf{y}=\nabla\times\zeta - \nabla \times \eta,~\mathbf{y}=(\eta,\zeta)
            \end{equation*}
            Really,
            \begin{equation*}
                \begin{aligned}
                    \langle A\mathbf{y},\mathbf{y}\rangle_2=\int_{\Gamma}(\zeta\times \eta)\cdot n_\Gamma d\Gamma
                \end{aligned}
            \end{equation*}
        \end{example}   
        Now, let us consider other submonotone operators. 
        \begin{example}
            \label{yyderiv}
            Let $X=L^p((a;b))$, $p>1$. We consider the following operator
            \begin{equation*}
                A(y)=syy',~s\in\mathbb{R}
            \end{equation*}
            Then $-A$ is $(p,\psi)$-submonotone. Really, if $\hat{y}=y_1-y_2$, then 
            \begin{equation*}
                \begin{aligned}
                    &\langle A(y_1)-A(y_2),\hat{y}\rangle_p\leq \left(\frac{s}{p+1}|\hat{y}|^{p+1}+\frac{s}{p}|\hat{y}|^{p}y_2\right)\Big|_{a}^{b}+|s|\left(1+\frac{1}{p}\right)\|y'_2\|_{C([a;b])}\|\hat{y}\|^{p}_{L^p((a;b))}.
                \end{aligned}
            \end{equation*}
        \end{example}
        The following example is a multidimensional case of Example \ref{yyderiv}; yet it's negative is submonotone on a subspace.
        \begin{example}
            \label{nsnonl}
            Here we consider in $L^p(\Omega;\mathbb{R}^m)$, $p>1$. Let $\Omega\subset\mathbb{R}^m$, $\Gamma=\partial\Omega$, $X=L^p(\Omega;\mathbb{R}^m)$, and 
            \begin{equation*}
                A(\mathbf{y})=s(\mathbf{y}\cdot\nabla)\mathbf{y},~s\in\mathbb{R}.
            \end{equation*}
            Then $-A$ is $(p,\psi)$-submonotone on $\mathcal{M}=\{\mathbf{y}\mid \nabla\cdot \mathbf{y}=0\}$. Really, if we put $\hat{\mathbf{y}}=\mathbf{\chi}-\mathbf{y}$, $\mathbf{y}\in\mathcal{M}$, then
            \begin{equation*}
                \begin{aligned}
                    &\langle A(\mathbf{\chi})-A(\mathbf{y}),\hat{\mathbf{y}}\rangle_p\leq \frac{s}{p}\int_{\Gamma}|\hat{\mathbf{y}}|^{p}\left(\mathbf{\hat{\mathbf{y}}}\cdot\mathbf{n}_\Gamma\right)d\Gamma+\\
                    &+|s|\frac{\|\mathbf{\chi}\|_{C(\overline{\Omega};\mathbb{R}^m)}+\|\mathbf{y}\|_{C(\overline{\Omega};\mathbb{R}^m)}}{p^2}\left((p-1)\|\hat{\mathbf{y}}\|^{p}_{L^p(\Omega;\mathbb{R}^m)}+\|\nabla\cdot\hat{\mathbf{y}}\|^p_{L^p(\Omega)}\right)+\\
                    &+m|s|\|\nabla\mathbf{y}\|_{L^\infty(\Omega;\mathbb{R}^{m\times m})}\|\hat{\mathbf{y}}\|^p_{L^p(\Omega;\mathbb{R}^m)}+\frac{s}{p}\int_{\Gamma}|\hat{\mathbf{y}}|^{p}\left(\mathbf{y}\cdot\mathbf{n}_\Gamma\right)d\Gamma
                \end{aligned}
            \end{equation*}
            Let us note that $\psi$ contains the term with $\nabla\cdot\hat{\mathbf{y}}$, which is subordinate to the continiuty condition operator.
        \end{example}
    \subsection{Powered operators and submonotone operators w.r.t. norm}\label{powss}
        In \cite{baikoleymishramolinaro}, the authors considered the one-dimensional Camassa-Holm equation involving mixed derivative $\partial_t\partial^2_{x}$. For a sufficiently regular solution, one can take it as $-\frac{d}{dt}U$, where $U=-\partial^2_x$. For such cases, we extend a concept of submonotonicity/subordination (for simplicity, we deal with the whole domain only). Precisely, we define operators $U$ and $A(t)$ used in (\ref{genpartype}). Remarkably, we can use such a definition of $U$ in a hyperbolic-type equation (\ref{hyptype}). 

        \begin{definition}
            Let $X$ be a $\mathbb{R}$-smooth Banach space, and $\mathcal{D}(U)\subset X$ be a subspace. Let $U:\mathcal{D}(U)\to X$ be a linear operator on $X$. We will say that $U$ is \textit{$(p,\epsilon)$-powered} for some $\epsilon:\mathcal{D}(U)\times\mathcal{D}\left(U^{\frac{1}{p}}\right)\to\mathbb{R}$, if there exists a linear operator $U^{\frac{1}{p}}:\mathcal{D}\left(U^{\frac{1}{p}}\right)\to X$, $\mathcal{D}(U)\subset \mathcal{D}\left(U^{\frac{1}{p}}\right)$, satisfying
            \begin{equation*}
                \begin{aligned}
                    \langle U\chi,y\rangle_p=\epsilon(\chi,y)+\left\langle U^{\frac{1}{p}}\chi,U^{\frac{1}{p}}y\right\rangle_p,~\forall \chi\in\mathcal{D}(U),~\forall y\in\mathcal{D}\left(U^{\frac{1}{p}}\right).
                \end{aligned}
            \end{equation*}
        \end{definition}
        Now, we unveil the following extension of Definition \ref{submonotone}.
        \begin{definition}
            Let $X$ be a $\mathbb{R}$-smooth Banach space, $\{U_k\}_{k=1}^{K}$ be a set of $(p,\psi_k)$-powered operators, and $\mathcal{D}(A)\subset\mathcal{D}(U_k)\subset\mathcal{D}\left(U^{\frac{1}{p}}_k\right)$ be subspaces. We say that $-A$ is \textit{($p,\psi)$-submonotone, w.r.t. norm $\|\cdot\|_{\sum U_k^{\frac{1}{p}}}$} for some $\psi:\left(\mathcal{D}(A)\right)^2\to\mathbb{R}$, if 
            \begin{equation*}
                \langle A(\chi-y),\chi-y\rangle_p\leq \psi(\chi,y)+\Lambda(\chi,y)\sum_{k=1}^{K}\left\|U^{\frac{1}{p}}_k\chi-U^{\frac{1}{p}}_ky\right\|^p,
            \end{equation*}
            for all $\chi,y\in\mathcal{D}(A)$ and some $\Lambda(\chi,y)\geq 0$.
        \end{definition}
        For generalized parabolic and hyperbolic cases, we need different definitions of subordination. 
        \begin{definition}
            Let $\mathcal{D}_{\epsilon_{2}}\subset\mathcal{D}(B),\mathcal{D}_{\epsilon_1}\subset X$ be subspaces. Let $B:\mathcal{D}(B)\to Y$ be an operator on $X$ and $\epsilon:\mathcal{D}_{\epsilon_1}\times\mathcal{D}_{\epsilon_2}\to \mathbb{R}$ be a function. We say that $\epsilon$ is \textit{subordinate to $B$} if there exists $\rho\in C(Y;\mathbb{R}^{+})$ such that $\xi\to0~\Rightarrow~\rho(\xi)\to 0$ and the following condition hold,
            \begin{equation*}
                |\epsilon(\chi,y_1-y_2)|\leq \gamma(\chi, y_1,y_2)\rho(B(y_1)-B(y_2))\,
            \end{equation*}
            for every $\chi\in\mathcal{D}_{\epsilon_1}$, $y_1,y_2\in\mathcal{D}_{\epsilon_2}$ and some $\gamma(\chi,y_1,y_2)\geq0$.
        \end{definition}
        \begin{definition}
            Let $\mathcal{D}_{\epsilon_{1,2}}\subset\mathcal{D}(B_{1,2})\subset X$, $\mathcal{D}_{\epsilon_{1}}\subset \mathcal{D}_{\epsilon_{2}}$ be subspaces. Let $B_{1,2}:\mathcal{D}(B_{1,2})\to Y_{1,2}$ be operators on $X$ and $\epsilon:\mathcal{D}_{\epsilon_1}\times\mathcal{D}_{\epsilon_2}\to \mathbb{R}$ be a function. We say that $\epsilon$ is \textit{subordinate to $B_1$ and $B_2$} if there exist $\rho_{1,2}\in C(Y_{1,2};\mathbb{R}^{+})$ such that $\xi\to0~\Rightarrow~\rho_{1,2}(\xi)\to 0$ and the following condition holds,
            \begin{equation*}
                |\epsilon(\chi_1-\chi_2,y_1-y_2)|\leq \gamma(\chi_1,\chi_2,y_1,y_2)\left[\rho_1(B_1(\chi_1)-B_1(\chi_2))+\rho_2(B_2(y_1)-B_2(y_2))\right]\,
            \end{equation*}
            for every $\chi_1,\chi_2\in\mathcal{D}_{\epsilon_1}$, $y_1,y_2\in\mathcal{D}_{\epsilon_2}$ and some $\gamma(\chi_1,\chi_2,y_1,y_2)\geq0$.
        \end{definition}
        \begin{remark}
            One can also consider properties of submonotone w.r.t. norm operators and an extension of $(p,\epsilon)$-powered property to a closure, similar to Statement \ref{closure1}.
        \end{remark}
        We start with $(p,\epsilon)$-powered operators. 
        \begin{example}
            Let $X=L^p(\Omega;\mathbb{R}^n)$, and $U={\rm diag}(\varepsilon_1,\dots,\varepsilon_n)$, $\varepsilon_i>0$, that is
            \begin{equation*}
                U:\mathbf{y}=(y_1,\dots,y_n)^{T}\mapsto (\varepsilon_1y_1,\dots,\varepsilon_ny_n)^{T}=U\mathbf{y},~\forall \mathbf{y}\in L^p(\Omega;\mathbb{R}^n).
            \end{equation*}
            Then $U$ is $(p,\epsilon)$-powered with $U^{\frac{1}{p}}={\rm diag}\left(\varepsilon_1^{\frac{1}{p}},\dots,\varepsilon_n^{\frac{1}{p}}\right)$ and $\epsilon\equiv0$. Such operators appear, for instance, in the Maxwell equation. Furthermore, if $p=2$, $U$ can be any symmetric, positively defined matrix. 
        \end{example}
        \begin{example}
            \label{derivpow}
            Let $X=L^2((a;b))$ (one can also consider $\mathbb{C}$-valued functions), $U=(-1)^\sigma\partial^{2\sigma}_{x}$. Then $U$ is $(2,\epsilon)$-powered and $\sqrt{U}=\partial^{\sigma}_x$. Really,
            \begin{equation*}
                \langle U\chi,y\rangle_2=\sum_{\nu=1}^{\sigma}(-1)^{\sigma+\nu-1}\chi^{(2\sigma-\nu)}y^{(\nu)}\Big|_{a}^{b}+\left\langle\chi^{(\sigma)},y^{(\sigma)}\right\rangle_2
            \end{equation*}
            Moreover, respective $\sqrt{U}$-norm is the Sobolev $H^{\sigma}((a;b))$-norm.
        \end{example}
        Now, let us turn to a submonotone w.r.t. a norm operator.
        \begin{example}
            \label{nonlinearities}    
            Let $X=L^2((a;b))$. Then for
            \begin{equation*}
                A(y)=s\frac{\frac{d}{dx}\left(y^2y^{(\sigma)}\right)}{y} = 2sy'y^{(\sigma)}+syy^{(\sigma+1)},~s\in\mathbb{R}
            \end{equation*}
            where $\sigma\geq 1$, $-A$ is $(2,\psi)$-submonotone w.r.t. Sobolev $H^{\left\lceil\frac{\sigma}{2}\right\rceil}((a;b))$-norm (see Example \ref{derivpow}). 
            Really, if $\hat{y}=y_1-y_2$,
            \begin{equation*}
                \begin{aligned}
                    &\langle A(y_1)-A(y_2), \hat{y}\rangle_{2}\leq s\hat{y}^2y_1^{(\sigma)}\Big|_{a}^{b}+sy_2\hat{y}\hat{y}^{(\sigma)}\Big|_{a}^{b}+s\sum_{\nu=0}^{l-1}(-1)^{\nu}\hat{y}^{(\sigma-1-\nu)}\left(\hat{y}y'_2-\hat{y}'y_2\right)^{(\nu)}\Big|_{a}^{b}+\\
                    &+s\begin{cases}
                        2^{l+1}\|y_2\|_{C^{l+1}([a;b])}\|\hat{y}\|^2_{H^{l+1}((a;b))},~&\sigma=2l+1\\
                        (2^{l+2}+1)\|y_2\|_{C^{l+2}([a;b])}\|\hat{y}\|^2_{H^{l+1}((a;b))}+\frac{1}{2}\left|\left(\hat{y}^{(l+1)}\right)^2y_2\Big|_{a}^{b}\right|,~&\sigma=2l+2
                    \end{cases}
                \end{aligned}
            \end{equation*}
        \end{example}
    \subsection{Coercive operators}\label{coerss}
        The last type of PDE we consider is elliptic (\ref{elltype}). In \cite{mishramolinaro}, the authors dealt with similar equations and took operator $A$ to have a conditional Lipschitz inverse operator. With a $\mathbb{R}$-smooth Banach space, we consider instead an extended version. Again, we assume the case of the whole domain only. 
        
        \begin{definition}
            Let $X$ be a $\mathbb{R}$-smooth Banach space, $\mathcal{D}(A)\subset X$ be a subspace. We will say that an operator $A:\mathcal{D}(A)\to X$ is \textit{$(p,\psi)$-coercive}, for some $\psi:\mathcal{D}(A)\times\mathcal{D}(A)\to\mathbb{R}$ and $p>1$, if it is true that  
            \begin{equation*}
                \|\chi-y\|^{p}\leq \psi(\chi,y)+\Lambda(\chi,y)\langle A(\chi)-A(y),\chi-y\rangle_{p}
            \end{equation*}
            for every $\chi,y\in \mathcal{D}(A)$, and some $\Lambda(\chi,y)\geq 0$. 
        \end{definition}
        \begin{remark}\label{remclos}
            Again, we can consider an extension of $(p,\psi)$-coercivity to a positive linear combination and a closure similarly to Statement \ref{closure1}. 
        \end{remark}

        It is natural to consider an elliptic operator. For simplicity, we deal with its first term only.
        \begin{example}\label{ellipticop}
            Let $\Omega\subset\mathbb{R}^m$ be a Lipschitz domain, $\Gamma=\partial\Omega$, $X=L^p(\Omega)$, $p\geq2$. Also, let for every $x\in\Omega$ and some $\varepsilon>0$, matrix $Q(x)\in\mathbb{R}^{m\times m}$ satisfy 
            \begin{equation*}
                Q^T(x)=Q(x)~\text{and}~Q(x)\xi\cdot\xi\geq \varepsilon\|\xi\|^2_{\mathbb{R}^m},~\xi\in\mathbb{R}^m.
            \end{equation*}
            If $Q_{ij}\in L^{\infty}(\Omega)$, for every $1\leq i,j\leq m$, then an elliptic operator
            \begin{equation*}  
                Ay=-\nabla\cdot (Q(x)\nabla y)
            \end{equation*}
            is $(p,\psi)$-coercive.

            Really, if $\hat{y}=y_1-y_2$, and applying the $L^2$-Poincare inequality with trace term for $\Xi(\hat{y})={\rm sgn(\hat{y})}|\hat{y}|^{\frac{p}{2}}$ with constant $\pi_{2,tr}>0$, we have
            \begin{equation*} 
                \begin{aligned}
                    &\|\hat{y}\|^p_{L^p(\Omega)}\leq 2\pi_{2,tr}\left(\int_{\Omega}|\hat{y}|^{p-2}\|\nabla\hat{y}\|^2_{\mathbb{R}^m}dx+\|\hat{y}\|^p_{L^p(\Gamma)}\right)\leq\\
                    &\leq 2\pi_{2,tr}\left(\frac{p}{4(p-1)\varepsilon}\int_{\Gamma}\left((Q(x)\nabla \hat{y})\cdot\mathbf{n}_\Gamma\right)\hat{y}|\hat{y}|^{p-2}d\Gamma + \int_{\Gamma}|\hat{y}|^pd\Gamma\right)+\frac{\pi_{2,tr}p^2}{2(p-1)\varepsilon}\langle A\hat{y},\hat{y}\rangle_{p}
                \end{aligned}
            \end{equation*}
        \end{example}

        \begin{remark}
            Even though, we introduced the real $p$-form to work with time derivatives of $\|\cdot\|^p$, we use it to define $(p,\psi)$-coercive operators in elliptic equations that don't require this. One can proceed even further, dealing with the form $\langle\cdot,\cdot\rangle_{p}:X_1\times X_2\to\mathbb{R}$ that satisfies the following property
            \begin{equation*}
                |\langle y,\chi\rangle_p|\leq \|y\|_{X_1}\|y\|_{X_2}^{p-1},~\forall y\in X_1,~\forall \chi\in X_2
            \end{equation*}
            Thus, the case of the real $p$-form is the particular case $X_1=X_2=X$. Another example is a Banach space $X_2=X$ with its dual space $X_1=X^{*}$, where $\langle \varphi, y\rangle_2 := {\rm Re}[\varphi(y)]$. Theorem \ref{theq} will still hold for such a generalization.
        \end{remark}

\section{PINN's error estimation}
    This section presents PINN's error estimates for PDEs' types (\ref{partype})--(\ref{elltype}). Also, in the case of semilinear parabolic-type equations, we compare the estimation with an approach in \cite{hillebrechtunger1}. 
    
    Let us note that we describe in detail only the case of parabolic-type equations (\ref{partype}). In other cases, for simplicity, we omit some conditions. For instance, we give an estimate based on training error (Corollary \ref{corpar}) for the parabolic-type equations (\ref{partype}) only. Also, we consider the case of submonotonicity/subordination on a subspace for this type only. For other cases, we deal with the whole domain. 

    \subsection{Parabolic-type equation}\label{parss}
        Consider the problem (\ref{eq}) in a $\mathbb{R}$-smooth Banach space $X$
        \begin{equation}
            \label{eq}
            \begin{aligned}
                &\frac{dw}{dt}=A(t)(w)\\
                &w\Big|_{t=0}=w_0\\
                &B(t)(w)=0
            \end{aligned}
        \end{equation}
        where $A(t):\mathcal{D}(A(t))\to X$ and $B(t):\mathcal{D}(B(t))\to Y$ are nonlinear operators, $\mathcal{D}(A(t))\subset \mathcal{D}(B(t))\subset X$ are subspaces, $Y$ is a Banach space. We also consider the following subspace of $\mathcal{D}(A(t))$
        \begin{equation}
            \label{preimage}
            \mathcal{M}(A(t),B(t)):={\rm Ker}(B(t))\cap\mathcal{D}(A(t))
        \end{equation}
        For a solution of \ref{eq}, we assume $w\in C(\overline{I};X)\cap C^1(I;X)$ and $w(t)\in \mathcal{M}(A(t),B(t))$ for $t\in I$.

        Let us consider a neural network $w_\theta$ with parameter $\theta$, approximating solution $w$ of (\ref{eq}), and the following PINN residuals
        \begin{equation}
            \label{resid}
            \begin{aligned}
                &\mathcal{R}_{eq}=\frac{dw_\theta}{dt}-A(t)(w_\theta),\\
                &\mathcal{R}_{in}=w_\theta|_{t=0}-w_0,\\
                &\mathcal{R}_{bn}=B(t)(w_\theta).
            \end{aligned}
        \end{equation}
        We are interested in the following total error
        \begin{equation}
            \label{totalerr}
            \mathcal{E}:=\|w_\theta-w\|^{q}_{L^q(I; X)}
        \end{equation}

        Also, we need to consider approximating rules for powered norms of residuals (quadrature rules in particular).
        \begin{enumerate}
            \item
            Let $\mathcal{T}_{eq}\subset \overline{I}\times\mathbb{R}^m$, and let there exist $\mathcal{Q}_{M,eq}:L^p(I; X)\times \mathcal{T}_{eq}^{M}\to\mathbb{R}$ with
            \begin{equation*}
                \left|\|w\|^p_{L^p(I; X)}-\mathcal{Q}_{M,eq}(w,(t_1,x_1),\dots,(t_M,x_M))\right|\leq \beta^{*}_{eq}(w)M^{-\alpha_{eq}},
            \end{equation*}
            for some $\beta^{*}_{eq}(w)\geq 0$, $\alpha_{eq}>0$ and every $M\in\mathbb{N}$, $w\in L^p(I; X)$, and $\{(t_i,x_i)\}_{i=1}^{M}\subset \mathcal{T}_{eq}$. 
            \item
            Let $\mathcal{T}_{in}\subset \mathbb{R}^m$, and let there exist $\mathcal{Q}_{M,in}:X\times \mathcal{T}_{in}^{M}\to\mathbb{R}$ with
            \begin{equation*}
                \left|\|y\|^p_{X}-\mathcal{Q}_{M,in}(y,x_1,\dots,x_M)\right|\leq \beta^{*}_{in}(y)M^{-\alpha_{in}}
            \end{equation*}
            for some $\beta^{*}_{in}(y)\geq 0$, $\alpha_{in}>0$ and every $M\in\mathbb{N}$, $y\in X$, and $\{x_i\}_{i=1}^{M}\subset \mathcal{T}_{in}$. 
            \item
            Let $\mathcal{T}_{bn}\subset \overline{I}$, and let there exist $\mathcal{Q}_{M,bn}:L^1(I)\times \mathcal{T}_{bn}^{M}\to\mathbb{R}$ with
            \begin{equation*}
                \left|\|g\|^p_{L^p(I;Y)}-\mathcal{Q}_{M,bn}(g,t_1,\dots,t_M)\right|\leq \beta^{*}_{bn}(g)M^{-\alpha_{bn}}
            \end{equation*}
            for some $\beta^{*}_{bn}(g)\geq 0$, $\alpha_{bn}>0$ and every $M\in\mathbb{N}$, $g\in L^p(I;Y)$, and $\{t_i\}_{i=1}^{M}\subset \mathcal{T}_{bn}$. 
        \end{enumerate}

        Given training sets $\left\{(t_i,x_i)\right\}_{i=1}^{M_{eq}}\subset \mathcal{T}_{eq}$, $\left\{x_i\right\}_{i=1}^{M_{in}}\subset \mathcal{T}_{in}$, $\left\{t_i\right\}_{i=1}^{M_{bn}}\subset \mathcal{T}_{bn}$, we have the following training errors
        \begin{equation*}
            \begin{aligned}
                &\mathcal{E}_{T,eq}=\mathcal{Q}_{M_{eq},eq}(\mathcal{R}_{eq},(t_1,x_1),\dots,(t_{M_{eq}},x_{M_{eq}})),~\\
                &\mathcal{E}_{T,in}=\mathcal{Q}_{M_{in},in}(\mathcal{R}_{in},x_1,\dots,x_{M_{in}}),~\\
                &\mathcal{E}_{T,bn}=\mathcal{Q}_{M_{bn},bn}(\mathcal{R}_{bn},t_1,\dots,t_{M_{bn}})
            \end{aligned}
        \end{equation*}
        \begin{theorem}
            \label{th2}
            Let us given a solution $w$ of problem (\ref{eq}), and a neural network $w_\theta$ with residuals (\ref{resid}) and total error (\ref{totalerr}). Let, moreover, $-A(t)$ be $(p,\psi(t))$-submonotone on $\mathcal{M}(A(t),B(t))$, with $\psi(t)$ subordinate to $B(t)$ on $\mathcal{M}(A(t),B(t))$, for every $t\in\overline{I}$. ($\mathcal{M}(A(t),B(t))$ is defined in (\ref{preimage})). Furthermore, let respective $\gamma(\cdot,w_\theta(\cdot),w(\cdot))\in C(\overline{I})$, $\rho(t)\equiv \rho$, and $\Lambda(\cdot,w_\theta(\cdot),w(\cdot))\in L^1(I)$. Then
            \begin{equation*}
                \mathcal{E}\leq \mathcal{C}^{\frac{q}{p}}\left(\frac{p(e^{\frac{q(p-1)T}{p}}-1)}{q(p-1)}\right)e^{q\|\Lambda(\cdot,w_\theta(\cdot),w(\cdot))\|_{L^1(I)}},
            \end{equation*}
            where
            \begin{equation*}
                \begin{aligned}
                    \label{cth2}
                    \mathcal{C}&= \|\mathcal{R}_{eq}\|^p_{L^p(I; X)}+\|\mathcal{R}_{in}\|^p+p\|\gamma(\cdot,w_\theta(\cdot),w(\cdot))\|_{C(\overline{I})}\|\rho(\mathcal{R}_{bn})\|_{L^1(I)}
                \end{aligned}
            \end{equation*}
        \end{theorem}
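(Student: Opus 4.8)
The plan is to imitate the standard energy/Grönwall argument from \cite{mishramolinaro}, but carried out with the $p$-powered norm rather than the squared $L^2$-norm, using the machinery of the real $p$-form. Set $e(t):=w_\theta(t)-w(t)$, the pointwise-in-time error in $X$. The core quantity is $\phi(t):=\|e(t)\|^p$. Since $w\in C^1(I;X)$ and $w_\theta$ is a neural network (hence smooth in $t$), $e$ is differentiable into $X$, so property 7 of Lemma~\ref{props} gives $\frac{d}{dt}\phi(t)=p\langle \frac{de}{dt},e\rangle_p$. Now substitute $\frac{de}{dt}=\frac{dw_\theta}{dt}-\frac{dw}{dt}=\bigl(\mathcal{R}_{eq}+A(t)(w_\theta)\bigr)-A(t)(w)$ using the residual definition (\ref{resid}) and the PDE (\ref{eq}). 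By additivity in the first argument (property 1 of Lemma~\ref{props}),
\begin{equation*}
    \frac{1}{p}\frac{d\phi}{dt}=\langle \mathcal{R}_{eq},e\rangle_p+\langle A(t)(w_\theta)-A(t)(w),e\rangle_p.
\end{equation*}

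Next I would bound the two terms. For the second term, since $w(t)\in\mathcal{M}(A(t),B(t))$ and $w_\theta(t)\in\mathcal{D}(A(t))$, the $(p,\psi(t))$-submonotonicity of $-A(t)$ on $\mathcal{M}(A(t),B(t))$ applies with $\chi=w_\theta(t)$, $y=w(t)$, giving
\begin{equation*}
    \langle A(t)(w_\theta)-A(t)(w),e\rangle_p\leq \psi(t)(w_\theta,w)+\Lambda(t)(w_\theta,w)\|e\|^p.
\end{equation*}
Since $\psi(t)$ is subordinate to $B(t)$ on $\mathcal{M}(A(t),B(t))$ and $B(t)(w)=0$, $B(t)(w_\theta)=\mathcal{R}_{bn}$, we get $|\psi(t)(w_\theta,w)|\leq \gamma(t)(w_\theta,w)\,\rho(\mathcal{R}_{bn}(t))$. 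For the first term, property 4 of Lemma~\ref{props} gives $|\langle \mathcal{R}_{eq},e\rangle_p|\leq \|\mathcal{R}_{eq}\|\,\|e\|^{p-1}$, and Young's inequality with exponents $p$ and $p/(p-1)$ yields $\|\mathcal{R}_{eq}\|\,\|e\|^{p-1}\leq \frac{1}{p}\|\mathcal{R}_{eq}\|^p+\frac{p-1}{p}\|e\|^p$. Collecting, with $g(t):=\|\mathcal{R}_{eq}(t)\|^p+p\,\gamma(t)(w_\theta,w)\rho(\mathcal{R}_{bn}(t))$ and $\mu(t):=p\Lambda(t)(w_\theta,w)+(p-1)$, we obtain the differential inequality $\phi'(t)\leq g(t)+\mu(t)\phi(t)$.

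The final step is Grönwall. Integrating, with $\phi(0)=\|\mathcal{R}_{in}\|^p$,
\begin{equation*}
    \phi(t)\leq e^{\int_0^t\mu(s)ds}\Bigl(\phi(0)+\int_0^t g(s)ds\Bigr)\leq e^{(p-1)t+p\|\Lambda(\cdot,w_\theta(\cdot),w(\cdot))\|_{L^1(I)}}\,\mathcal{C},
\end{equation*}
where $\mathcal{C}$ absorbs $\|\mathcal{R}_{in}\|^p$, $\|\mathcal{R}_{eq}\|^p_{L^p(I;X)}$, and $p\|\gamma\|_{C(\overline I)}\|\rho(\mathcal{R}_{bn})\|_{L^1(I)}$, matching the statement (the continuity of $\gamma(\cdot,w_\theta(\cdot),w(\cdot))$ on $\overline I$ is what lets us pull it out in sup-norm, and $\rho(t)\equiv\rho$, $\Lambda\in L^1$ are used here). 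Then $\mathcal{E}=\|e\|^q_{L^q(I;X)}=\int_0^T\phi(t)^{q/p}dt\leq \mathcal{C}^{q/p}\int_0^T e^{\frac{q}{p}((p-1)t)}dt\cdot e^{q\|\Lambda\|_{L^1(I)}}$, and computing the elementary integral $\int_0^T e^{\frac{q(p-1)t}{p}}dt=\frac{p(e^{q(p-1)T/p}-1)}{q(p-1)}$ gives exactly the claimed bound.

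The main obstacle is not any single inequality but making the time-differentiation step rigorous: one must know that $t\mapsto\|e(t)\|^p$ is genuinely differentiable with derivative $p\langle e',e\rangle_p$, which is precisely property 7 of Lemma~\ref{props} and relies on $X$ being $\mathbb{R}$-smooth and on $e$ being a differentiable $X$-valued curve; the chain rule (\ref{chainrule}) needs $D(\|\cdot\|^p)(\cdot;\chi)$ continuous, which property 5 supplies for $p>1$. A secondary point is bookkeeping: one must check that $w_\theta(t)\in\mathcal{D}(A(t))$ so submonotonicity is even applicable, and that all the "constant" functions $\gamma(t),\Lambda(t)$ evaluated along $(w_\theta(t),w(t))$ are measurable/integrable as assumed in the hypotheses, so that the scalar Grönwall lemma applies on $[0,T]$.
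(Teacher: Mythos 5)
Your proposal is correct and follows essentially the same route as the paper's proof: differentiate $\|\hat w\|^p$ via property 7 of Lemma \ref{props}, split off the equation residual with property 4 and Young's inequality, apply $(p,\psi(t))$-submonotonicity and subordination to the operator difference, invoke Gr\"onwall--Bellman, and finally take the $\tfrac{q}{p}$-power and integrate in time. The intermediate differential inequality you derive matches the paper's displayed inequality exactly, and your closing remarks on the differentiability of $t\mapsto\|\hat w(t)\|^p$ and the domain condition $w_\theta(t)\in\mathcal{D}(A(t))$ correctly identify the implicit regularity assumptions the paper leaves unstated.
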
 
        \begin{proof}
            Let $\hat{w}:=w_\theta-w$. Then with (\ref{dnormpowp}) and other properties of $p$-form, the Young inequality, $(p,\psi(t))$-submonotonicity of $-A(t)$ and subordination of $\psi(t)$, we have
            \begin{equation*}  
                \frac{d}{dt}\|\hat{w}\|^p\leq \|\mathcal{R}_{eq}\|^p+p\gamma(t,w_\theta(t),w(t))\rho(\mathcal{R}_{bn})+(p-1+p\Lambda(t,w_\theta(t),w(t)))\|\hat{w}\|^p
            \end{equation*}
            Integrating from $0$ to $t\leq T$ and applying the Gr\"onwall-Bellman lemma, we obtain
            \begin{equation*}
                \|\hat{w}\|^p\leq \mathcal{C}e^{(p-1)t+p\int_{0}^{t}(\Lambda(t,w_\theta(t),w(t)))d\tau},
            \end{equation*}
            where $\mathcal{C}$ is described in (\ref{cth2}). It remains to take $\frac{q}{p}$-power and integrate on $t$, $1\leq q<\infty$.
        \end{proof}
        \begin{remark}
            In Theorem \ref{th2}, instead of $L^q(I; X)$ norm, one can take another one. Also, we assume that solution $w$ and network $w_\theta$ are sufficiently regular to have finite $L^q$-norm.
        \end{remark} 
        \begin{corollary}
            \label{corpar}
            Let us consider conditions of Theorem \ref{th2} and training errors. If, moreover, 
            \begin{equation*}
                \|\rho(g)\|_{L^1(I)}\leq h(\|g\|_{L^p(I;Y)}),
            \end{equation*} 
            for some monotone $h:\mathbb{R}^{+}\to\mathbb{R}^{+}$, $\mu\to0~\Rightarrow~h(\mu)\to0$, then we have
            \begin{equation*}
                \mathcal{E}\leq \tilde{\mathcal{C}}^{\frac{q}{p}}\left(\frac{p(e^{\frac{q(p-1)T}{p}}-1)}{q(p-1)}\right)e^{q\|\Lambda(\cdot,w_\theta(\cdot),w(\cdot))\|_{L^1(I)}},
            \end{equation*}
            where
            \begin{equation}
                \label{parexact}
                \begin{aligned}
                    \tilde{\mathcal{C}}&=\mathcal{E}_{T,eq}+\beta^{*}_{eq}(w_\theta,w)M_{eq}^{-\alpha_{eq}}+\mathcal{E}_{T,in}+\beta^{*}_{in}(w_\theta,w)M_{in}^{-\alpha_{in}}+\\
                    &+p\|\gamma(\cdot,w_\theta(\cdot),w(\cdot))\|_{C(\overline{I})}h\left(\left(\mathcal{E}_{T,bn}+\beta^{*}_{bn}(w_\theta,w)M_{bn}^{-\alpha_{bn}}\right)^{\frac{1}{p}}\right)
                \end{aligned}
            \end{equation}
            or asymptotically
            \begin{equation}
                \label{parasymp}
                \mathcal{E}=\mathcal{O}\left(\mathcal{E}_{T,eq}+M_{eq}^{-\alpha_{eq}}+\mathcal{E}_{T,in}+M_{in}^{-\alpha_{in}}+h\left(\left(\mathcal{E}_{T,bn}+M_{bn}^{-\alpha_{bn}}\right)^{\frac{1}{p}}\right)\right)
            \end{equation}
        \end{corollary}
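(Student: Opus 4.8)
The plan is to obtain Corollary \ref{corpar} purely as a post-processing of Theorem \ref{th2}. The quantity $\mathcal{C}$ there is written in terms of the \emph{exact} powered norms of the residuals $\mathcal{R}_{eq}$, $\mathcal{R}_{in}$, $\mathcal{R}_{bn}$; it suffices to replace each of these by the corresponding training error plus a quadrature remainder, using the three approximation rules stated before the theorem, and to absorb the boundary contribution through the extra hypothesis on $h$. The prefactor in Theorem \ref{th2} never changes, so the whole argument is an inequality chain.

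First I would treat the equation and initial residuals. Applying the first approximation rule to $w=\mathcal{R}_{eq}$ on the training set $\{(t_i,x_i)\}_{i=1}^{M_{eq}}$ gives
\begin{equation*}
    \|\mathcal{R}_{eq}\|^p_{L^p(I;X)}\leq \mathcal{Q}_{M_{eq},eq}(\mathcal{R}_{eq},(t_1,x_1),\dots,(t_{M_{eq}},x_{M_{eq}}))+\beta^{*}_{eq}(\mathcal{R}_{eq})M_{eq}^{-\alpha_{eq}}=\mathcal{E}_{T,eq}+\beta^{*}_{eq}(w_\theta,w)M_{eq}^{-\alpha_{eq}},
\end{equation*}
where we use the notational identification $\beta^{*}_{eq}(w_\theta,w):=\beta^{*}_{eq}(\mathcal{R}_{eq})$ (legitimate since $\mathcal{R}_{eq}$ is determined by $w_\theta$ and $w$), and similarly for the other two remainders. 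The same step applied to the second rule with $y=\mathcal{R}_{in}$ yields $\|\mathcal{R}_{in}\|^p\leq \mathcal{E}_{T,in}+\beta^{*}_{in}(w_\theta,w)M_{in}^{-\alpha_{in}}$.

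Next comes the boundary term $\|\rho(\mathcal{R}_{bn})\|_{L^1(I)}$ appearing in $\mathcal{C}$. By the extra hypothesis $\|\rho(\mathcal{R}_{bn})\|_{L^1(I)}\leq h(\|\mathcal{R}_{bn}\|_{L^p(I;Y)})$, and the third approximation rule applied to $g=\mathcal{R}_{bn}$ gives $\|\mathcal{R}_{bn}\|^p_{L^p(I;Y)}\leq \mathcal{E}_{T,bn}+\beta^{*}_{bn}(w_\theta,w)M_{bn}^{-\alpha_{bn}}$, hence $\|\mathcal{R}_{bn}\|_{L^p(I;Y)}\leq \bigl(\mathcal{E}_{T,bn}+\beta^{*}_{bn}(w_\theta,w)M_{bn}^{-\alpha_{bn}}\bigr)^{1/p}$. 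Since $h$ is monotone (necessarily nondecreasing, as $h\geq 0$ and $h(\mu)\to 0$ when $\mu\to 0$) and $t\mapsto t^{1/p}$ is nondecreasing, composing gives
\begin{equation*}
    \|\rho(\mathcal{R}_{bn})\|_{L^1(I)}\leq h\!\left(\bigl(\mathcal{E}_{T,bn}+\beta^{*}_{bn}(w_\theta,w)M_{bn}^{-\alpha_{bn}}\bigr)^{1/p}\right).
\end{equation*}
Substituting these three estimates into the expression for $\mathcal{C}$ in Theorem \ref{th2} shows $\mathcal{C}\leq\tilde{\mathcal{C}}$ with $\tilde{\mathcal{C}}$ as in (\ref{parexact}); because $t\mapsto t^{q/p}$ is nondecreasing on $\mathbb{R}^{+}$, Theorem \ref{th2} immediately yields $\mathcal{E}\leq \tilde{\mathcal{C}}^{q/p}\bigl(\tfrac{p(e^{q(p-1)T/p}-1)}{q(p-1)}\bigr)e^{q\|\Lambda(\cdot,w_\theta(\cdot),w(\cdot))\|_{L^1(I)}}$.

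For the asymptotic form (\ref{parasymp}) I would observe that the prefactor $\tfrac{p(e^{q(p-1)T/p}-1)}{q(p-1)}e^{q\|\Lambda(\cdot,w_\theta(\cdot),w(\cdot))\|_{L^1(I)}}$ and the coefficients $\beta^{*}_{\bullet}$, $p\|\gamma(\cdot,w_\theta(\cdot),w(\cdot))\|_{C(\overline{I})}$ are all fixed (independent of $M_{eq},M_{in},M_{bn}$), so that $\tilde{\mathcal{C}}\to 0$ — by continuity of $h$ at $0$ — as the training errors and the terms $M_{\bullet}^{-\alpha_{\bullet}}$ tend to $0$; (\ref{parasymp}) then follows from the explicit bound, with the $q/p$-power absorbed into the $\mathcal{O}$ (reading it with the same power when $q<p$). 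I do not expect a genuine obstacle here: the proof is bookkeeping on top of Theorem \ref{th2}. The only points deserving a little care are the identification of the remainders $\beta^{*}_{\bullet}$ as functions of $(w_\theta,w)$ rather than of the residuals, and pairing the monotonicity of $h$ correctly with the $1/p$-th root so that the boundary term lands exactly in the form (\ref{parexact}).
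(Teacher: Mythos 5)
Your proposal is correct and is exactly the intended argument: the paper states Corollary \ref{corpar} without proof as an immediate consequence of Theorem \ref{th2}, obtained precisely by bounding each exact residual norm in $\mathcal{C}$ via the corresponding quadrature rule (training error plus $\beta^{*}_{\bullet}M_{\bullet}^{-\alpha_{\bullet}}$ remainder) and passing the boundary term through the monotone $h$ composed with the $1/p$-th root. Your side remarks — identifying $\beta^{*}_{\bullet}(w_\theta,w)$ with $\beta^{*}_{\bullet}(\mathcal{R}_{\bullet})$ (consistent with the remark following the corollary) and noting that the $q/p$-power is silently absorbed in the asymptotic form (\ref{parasymp}) — are both accurate.
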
 
        \begin{remark}
            Approximation (quadrature) constants depend on some norm of residuals. $h$ can depend on domain. 
        \end{remark}  
        \begin{remark}
            One can substitute (\ref{eq}) (and therefore Theorem \ref{th2}) with the more general equation, dealing instead with parametrized family $\{A(t,z)\}_{t\in I, z\in Z}$, where the family is uniformly submonotone. The main feature of the parameter is its ability to be approximated by neural network $z_\theta$, even if we don't estimate its error $\|z-z_\theta\|$. One example of such a problem is an eigenvalue problem, where $Z=\mathbb{C}$, $z$ is an eigenvalue approximated by two network parameters $z_\theta=\theta_{z,1}+i\theta_{z,2}$. Another example is the Navier-Stokes equation, where $z$ represents the pressure term. 
        \end{remark}
    \subsection{Parabolic-type equation, non-smooth Banach space}\label{parnss}
        We briefly describe an extension of the approach presented in \cite{hillebrechtunger1} to the semilinear problem (\ref{eqalt}) in arbitrary Banach space $X$ and compare the obtained estimate with that of the previous subsection. 
        \begin{equation}
            \label{eqalt}
            \begin{aligned}
                &\frac{dw}{dt}=Aw+F(t)(w)\\
                &w\Big|_{t=0}=w_0\\
                &B(w)=0
            \end{aligned}
        \end{equation}
        where $A:\mathcal{D}(A)\to X$ and $B:\mathcal{D}(B)\to Y$ are linear operators, $\mathcal{D}(A)\subset \mathcal{D}(B)\subset X$ are subspaces, $Y$ is a Banach space. Again, we consider the following subspace of $\mathcal{D}(A)$
        \begin{equation*}
            \mathcal{M}(A,B):={\rm Ker}(B)\cap\mathcal{D}(A).
        \end{equation*}
        Residuals are
        \begin{equation}
            \label{residalt}
            \begin{aligned}
                &\mathcal{R}_{eq}=\frac{dw_\theta}{dt}-Aw_\theta-F(t)(w_\theta),\\
                &\mathcal{R}_{in}=w_\theta|_{t=0}-w_0,\\
                &\mathcal{R}_{bn}=B(w_\theta).
            \end{aligned}
        \end{equation}
        \begin{remark}
            For basics of the operator semigroup theory, see, for instance, \cite{pazy}. For generator $A$ of a strongly continuous semigroup $V(t)$, $-A$ is ''submonotone'' in the following sense. In a $\mathbb{R}$-smooth Banach space, an operator of form $-(A_0+s I)$, where $A_0$ is dissipative and $s\in\mathbb{R}$, is submonotone. Since $A$ is a generator of strongly continuous semigroup, then for some $\varepsilon\geq 1$ and $s\in\mathbb{R}$, it is true that
            \begin{equation*}
                \|(\lambda I - A)^{-1}\|\leq \frac{\varepsilon}{\lambda-s},~\forall \lambda>s.
            \end{equation*}
            Then, for $A_0:=A-s I$, we have
            \begin{equation*}
                \|(\lambda I - A_0)^{-1}\|\leq \frac{\varepsilon}{\lambda},~\forall \lambda>0.
            \end{equation*}
            There exists an equivalent norm $\|\cdot\|_{V}$ for which
            \begin{equation*}
                \|(\lambda I - A_0)^{-1}\|_{V}\leq \frac{1}{\lambda},~\forall \lambda>0,
            \end{equation*}
            that is, $A_0$ is dissipative (in generalized meaning). 
        \end{remark}
        Returning to problem (\ref{eqalt}), we have the following result.
        \begin{theorem} 
            Let us given a solution $w$ of problem (\ref{eqalt}), and a neural network $w_\theta$ with residuals (\ref{residalt}) and total error (\ref{totalerr}). Let $A\Big|_{\mathcal{M}(A,B)}$ be a generator of a semigroup $V(t)$, $\|V(t)\|\leq \varepsilon e^{s t}$. Let $B$ be right invertible, and for a right inverse $\Theta$, let $\left(A\Big|_{\mathcal{M}(A,B)}\right)\cdot \Theta$ be bounded. Also, let $F(t)$ be conditionally Lipschitz for every $t\in\overline{I}$ and $\Lambda(\cdot,w_\theta(\cdot),w(\cdot))\in L^1(I)$. Then we have an estimate
            \begin{equation}
                \label{sharpest}
                \mathcal{E}\leq \mathcal{C}\int_{0}^{T}e^{\frac{\varepsilon\|\Lambda(\cdot,w_\theta(\cdot),w(\cdot))\|_{L^1(I)}e^{s t}}{s}}dt
            \end{equation}
            where
            \begin{equation*}
                \begin{aligned}
                    &\mathcal{C}=\varepsilon e^{s T}\|\mathcal{R}_{eq}\|_{L^1(I; X)}+\varepsilon e^{s T}\|\mathcal{R}_{in}\|+\|\Theta\mathcal{R}_{bn}\|_{C(\overline{I};X)}+\\
                    &+\varepsilon e^{s T}\|\Theta\mathcal{R}_{bn}(0)\|+\varepsilon e^{s T}\|A\Theta\|\|\mathcal{R}_{bn}\|_{L^1(I; X)}+\varepsilon e^{s T}\left\|\frac{d\Theta\mathcal{R}_{bn}}{dt}\right\|_{L^1(I; X)}
                \end{aligned}
            \end{equation*} 
        \end{theorem}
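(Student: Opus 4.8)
The plan is to mimic the classical semigroup/variation-of-constants argument used in \cite{hillebrechtunger1}, but carried out for the \emph{error function} $\hat w := w_\theta - w$ rather than for $w$ itself, and to absorb the inhomogeneous boundary residual $\mathcal R_{bn}$ by subtracting a lift $\Theta\mathcal R_{bn}$ so that the corrected error lies in $\mathcal M(A,B)=\operatorname{Ker}B\cap\mathcal D(A)$, where $A$ generates $V(t)$. First I would write the equation satisfied by $\hat w$: subtracting (\ref{eqalt}) from the residual definition (\ref{residalt}) gives
\begin{equation*}
    \frac{d\hat w}{dt} = A\hat w + \big(F(t)(w_\theta)-F(t)(w)\big) + \mathcal R_{eq},\qquad \hat w(0)=\mathcal R_{in},\qquad B(\hat w)=\mathcal R_{bn}.
\end{equation*}
Since $\hat w\notin\mathcal M(A,B)$ in general, set $v := \hat w - \Theta\mathcal R_{bn}$, which satisfies $B(v)=0$, hence $v(t)\in\mathcal M(A,B)$, and
\begin{equation*}
    \frac{dv}{dt} = A v + \big(F(t)(w_\theta)-F(t)(w)\big) + \mathcal R_{eq} + A\Theta\mathcal R_{bn} - \frac{d}{dt}\big(\Theta\mathcal R_{bn}\big),\qquad v(0)=\mathcal R_{in}-\Theta\mathcal R_{bn}(0),
\end{equation*}
where the boundedness of $\big(A|_{\mathcal M(A,B)}\big)\Theta$ makes the term $A\Theta\mathcal R_{bn}$ well defined and controlled by $\|A\Theta\|\,\|\mathcal R_{bn}\|$.

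Next I would apply the variation-of-constants (mild-solution) formula for the semigroup $V(t)$ generated by $A|_{\mathcal M(A,B)}$:
\begin{equation*}
    v(t) = V(t)\big(\mathcal R_{in}-\Theta\mathcal R_{bn}(0)\big) + \int_0^t V(t-\tau)\Big[F(\tau)(w_\theta)-F(\tau)(w) + \mathcal R_{eq}(\tau) + A\Theta\mathcal R_{bn}(\tau) - \tfrac{d}{dt}\big(\Theta\mathcal R_{bn}\big)(\tau)\Big]d\tau.
\end{equation*}
Taking norms, using $\|V(t)\|\le \varepsilon e^{st}$ and the conditional Lipschitz bound $\|F(\tau)(w_\theta)-F(\tau)(w)\|\le \Lambda(\tau,w_\theta(\tau),w(\tau))\,\|\hat w(\tau)\|$, and then passing from $v$ back to $\hat w = v + \Theta\mathcal R_{bn}$ (so $\|\hat w(t)\|\le \|v(t)\| + \|\Theta\mathcal R_{bn}\|_{C(\overline I;X)}$), I obtain an integral inequality of the form
\begin{equation*}
    \|\hat w(t)\| \le \mathcal C + \varepsilon e^{sT}\int_0^t \Lambda(\tau,w_\theta(\tau),w(\tau))\,\|\hat w(\tau)\|\,d\tau,
\end{equation*}
with $\mathcal C$ collecting exactly the six listed contributions: $\varepsilon e^{sT}\|\mathcal R_{eq}\|_{L^1(I;X)}$, $\varepsilon e^{sT}\|\mathcal R_{in}\|$, $\|\Theta\mathcal R_{bn}\|_{C(\overline I;X)}$, $\varepsilon e^{sT}\|\Theta\mathcal R_{bn}(0)\|$, $\varepsilon e^{sT}\|A\Theta\|\,\|\mathcal R_{bn}\|_{L^1(I;X)}$, and $\varepsilon e^{sT}\|\tfrac{d\Theta\mathcal R_{bn}}{dt}\|_{L^1(I;X)}$. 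Applying the Grönwall–Bellman lemma gives $\|\hat w(t)\|\le \mathcal C\exp\big(\varepsilon e^{sT}\int_0^t\Lambda\,d\tau\big)\le \mathcal C\exp\big(\varepsilon e^{st}\|\Lambda\|_{L^1(I)}\big)$ after bounding $e^{sT}$ against $e^{st}$ appropriately inside the exponent; integrating the $q$-th power (here $q=1$ in the displayed estimate, or one keeps $\|\hat w\|$ and integrates) over $t\in(0,T)$ yields (\ref{sharpest}).

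The main obstacle I anticipate is \textbf{justifying the mild-solution formula and the regularity needed to move the boundary term under the operator $A$}: one must check that $\Theta\mathcal R_{bn}(t)\in\mathcal D(A)$ for a.e.\ $t$, that $t\mapsto\Theta\mathcal R_{bn}(t)$ is differentiable with the stated $L^1$ derivative, and that $v$ is genuinely a mild (or classical) solution so that the Duhamel representation is valid — this is where the hypotheses ``$B$ right invertible'', ``$(A|_{\mathcal M(A,B)})\Theta$ bounded'', and the implicit smoothness of $w_\theta$ and $w$ are all consumed. A secondary technical point is the bookkeeping that turns $e^{(p-1)t}$-type factors and the $e^{sT}$ prefactors into the clean exponential $\int_0^T e^{\varepsilon\|\Lambda\|_{L^1(I)}e^{st}/s}\,dt$ in (\ref{sharpest}); this is just careful estimation of the Grönwall exponent and poses no conceptual difficulty, but it is the place where a constant could easily be mis-stated. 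Everything else — linearity of $A$, the triangle inequality, and the conditional Lipschitz property of $F(t)$ — is routine.
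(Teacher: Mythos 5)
Your proposal follows essentially the same route as the paper's proof: the same lift $\hat w - \Theta\mathcal{R}_{bn}\in\mathcal{M}(A,B)$, the same Duhamel/mild-solution representation for the corrected error, and the same Gr\"onwall--Bellman step yielding the six-term constant $\mathcal{C}$. The only divergence is in the final bookkeeping: the paper keeps the kernel $\varepsilon e^{s(t-\tau)}$ inside the integral inequality rather than majorizing it by $\varepsilon e^{sT}$, which is how the factor $e^{\varepsilon\|\Lambda(\cdot,w_\theta(\cdot),w(\cdot))\|_{L^1(I)}e^{st}/s}$ in (\ref{sharpest}) (with its $1/s$) arises; your step of ``bounding $e^{sT}$ against $e^{st}$'' does not quite work as written for $t<T$, $s>0$, but this affects only the constant in the exponent, not the structure of the argument, and you correctly flagged it as the fragile spot.
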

        \begin{proof}
            Given fixed $w$ and $w_\theta$, let again $\hat{w}=w_\theta-w$. Then $\tilde{w}:=\hat{w}-\Theta\mathcal{R}_{bn}\in\mathcal{M}(A,B)$ is a solution to the following problem 
            \begin{equation*}
                \begin{aligned}
                    &\frac{d\tilde{w}}{dt}=A\tilde{w}+v(t),\\
                    &\tilde{w}|_{t=0}=\mathcal{R}_{in}-\Theta\mathcal{R}_{bn}(0),
                \end{aligned}
            \end{equation*}
            where 
            \begin{equation*}
                \begin{aligned}
                    &v=\mathcal{R}_{eq}+F(t)(w_\theta)-F(t)(w)+A\Theta\mathcal{R}_{bn}-\frac{d\Theta\mathcal{R}_{bn}}{dt}
                \end{aligned}
            \end{equation*}
            Then $\tilde{w}$ is also a ''mild'' solution, that is, it satisfies the following integral equation
            \begin{equation*}
                \tilde{w}(t)=V(t)\tilde{w}|_{t=0}+\int_{0}^{t}V(t-\tau)v(\tau)d\tau.
            \end{equation*}
            Hence,
            \begin{equation*}
                \begin{aligned}
                     \|\hat{w}\|&\leq \mathcal{C}+\varepsilon\int_{0}^{t}\Lambda(t,w_\theta(t),w(t))e^{s(t-\tau)}\|\hat{w}\|d\tau,
                \end{aligned}
            \end{equation*}
            for a.e. $t\in\overline{I}$. With the Gr\"onwall-Bellman lemma we have desired estimate. 
        \end{proof}
        \begin{remark}
            Let us compare this estimate with the one of Theorem \ref{th2}. (\ref{sharpest}) contains the $W^{1,1}(I; Y)$ norm of the additional conditions residual and a double exponent on $T$ if the semigroup is not uniformly bounded.
        \end{remark}
        The assumption of right invertibility is natural, as the following example demonstrates.
        \begin{example}
            Let $y\in C^1([a;b])$ and $By=(y(b)-y(a),y'(b)-y'(a))^{T}\in \mathbb{R}^2$. Let us denote $\xi_1:=y(b)-y(a)$, $\xi_2:=y'(b)-y'(a)$. We need to construct such a linear operator $\Theta:\mathbb{R}^2\to \mathcal{D}(B)$, that $A\Theta$ is bounded for some $A$. 

            We just put $\Theta$ to be a polynomial of degree $\leq2$, and 
            \begin{equation*}
                \Theta(\xi)(x):=\frac{\xi_2}{2(b-a)}x^2+\frac{2\xi_1-(b+a)\xi_2}{2(b-a)}x.
            \end{equation*}
            One can show that $B\Theta(\xi)=\xi$. Since ${\rm ran}(\Theta)$ is finite dimensional, $A\Theta$ is bounded for every linear operator $A$.
        \end{example}
    \subsection{Generalized parabolic-type equation}\label{gparss}
        For simplicity, we deal with submonotonicity/subordination on the whole domain. Also, for this reason, we put $q=p$. Consider problem (\ref{eqmixed}) in a $\mathbb{R}$-smooth Banach space $X$.
        \begin{equation}
            \label{eqmixed}
            \begin{aligned}
                &\frac{d}{dt}\sum_{k=1}^{K}U_kw=A(t)(w)\\
                &w\Big|_{t=0}=w_0\\
                &B(t)(w)=0,
            \end{aligned}
        \end{equation}
        where $A(t):\mathcal{D}(A(t))\to X$ and $B(t):\mathcal{D}(B(t))\to Y$ are nonlinear operators, $\mathcal{D}(A(t))\subset \mathcal{D}(B(t))\subset X$ are subspaces, $Y$ is a Banach space. Also, $U_k$ are $(p,\epsilon_k)$-powered, ${\rm Im}(U_k)\subset\mathcal{D}(B(t))$, $\mathcal{D}(A(t))\subset\mathcal{D}(U_k)\subset\mathcal{D}\left(U_{k}^{\frac{1}{p}}\right)$, $U_k$ and $U^{\frac{1}{p}}_k$ commute with $\frac{d}{dt}$, $U^{\frac{1}{p}}_k$ commute with $\Big|_{t=0}$, $1\leq k\leq K$. For a solution of (\ref{eqmixed}), we assume $w\in C(\overline{I};X)\cap C^1(I;X)$. 

        Consider a neural network $w_\theta$ with parameter $\theta$, approximating solution $w$ of (\ref{eqmixed}), and the PINN residuals
        \begin{equation}
            \label{residmixed}
            \begin{aligned}
                &\mathcal{R}_{eq}=\frac{d}{dt}\sum_{k=1}^{K}U_kw_\theta-A(t)(w_\theta),\\
                &\mathcal{R}_{in}=w_\theta|_{t=0}-w_0,\\
                &\mathcal{R}_{bn}=B(t)(w_\theta)\\
            \end{aligned}
        \end{equation}
        The total error is given by formula
        \begin{equation}
            \label{totalerrmixed}
            \mathcal{E}:=\int_{0}^{T}\|w_\theta-w\|^{p}dt
        \end{equation}
        \begin{theorem}\label{thgenpar}
            Let us given a solution $w$ of problem (\ref{eqmixed}), and a neural network $w_\theta$ with residuals (\ref{residmixed}) and total error (\ref{totalerrmixed}). Let also $-A(t)$ be a $(p,\psi(t))$-submonotone w.r.t. norm $\|\cdot\|_{\sum U_k^{\frac{1}{p}}}$, with $(p,\epsilon_k)$-powered operators $U_{k}$, where $\epsilon_k$ are subordinate to $B(t)$, $\psi(t)$ is subordinate to $B(t)$, for every $t\in\overline{I}$. Also, let respective $\tilde{\gamma}_k\left(\frac{d\hat{w}}{dt}(\cdot),w_\theta(\cdot),w(\cdot)\right)$,$\gamma(\cdot,w_\theta(\cdot),w(\cdot))\in C(\overline{I})$, $\rho(t)\equiv \rho$, $\tilde{\rho}_k(t)\equiv \tilde{\rho}_k$, and $\Lambda(\cdot,w_\theta(\cdot),w(\cdot))\in L^1(I)$. Also, let $\|\cdot\|^p\leq R\|\cdot\|^p_{\sum U_k^{\frac{1}{p}}}$. Then we have an estimate
            \begin{equation*}
                \mathcal{E}\leq \mathcal{C}\left(\frac{e^{R(p-1)T}-1}{R(p-1)}\right)e^{p\|\Lambda(\cdot,w_\theta(\cdot),w(\cdot))\|_{L^1(I)}},
            \end{equation*}
            where
            \begin{equation*}
                \begin{aligned}
                    \mathcal{C}&= \|\mathcal{R}_{eq}\|_{L^p(I; X)}^p+\|\mathcal{R}_{in}\|_{\Sigma U_k^{\frac{1}{p}}}^p+\\
                    &+p\|\gamma(\cdot,w_\theta(\cdot),w(\cdot))\|_{C(\overline{I})}\|\rho\left(\mathcal{R}_{bn}\right)\|_{L^1(I)}+\\
                    &+p\left\|\tilde{\gamma}_k\left(\frac{d\hat{w}}{dt}(\cdot),w_\theta(\cdot),w(\cdot)\right)\right\|_{L^1(I)}\|\tilde{\rho}_k\left(\mathcal{R}_{bn}\right)\|_{L^1(I)}
                \end{aligned}
            \end{equation*}
        \end{theorem}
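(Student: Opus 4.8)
The plan is to follow the scheme of the proof of Theorem~\ref{th2}, but with $\|\hat{w}(t)\|^p$ replaced by the auxiliary quantity
\[
\Phi(t):=\sum_{k=1}^{K}\left\|U_k^{\frac{1}{p}}\hat{w}(t)\right\|^p=\|\hat{w}(t)\|^p_{\sum U_k^{\frac{1}{p}}},\qquad \hat{w}:=w_\theta-w .
\]
First I would record the data at $t=0$ and on the boundary: since $w$ solves (\ref{eqmixed}) one has $B(t)(w)=0$, hence $B(t)(w_\theta)-B(t)(w)=\mathcal{R}_{bn}$; and since each $U_k^{\frac{1}{p}}$ commutes with $\big|_{t=0}$ and $w|_{t=0}=w_0$, one gets $\Phi(0)=\sum_k\|U_k^{\frac{1}{p}}\mathcal{R}_{in}\|^p=\|\mathcal{R}_{in}\|^p_{\sum U_k^{\frac{1}{p}}}$.

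The core computation is the differentiation of $\Phi$. Because each $U_k^{\frac{1}{p}}$ commutes with $\tfrac{d}{dt}$ and $U_k^{\frac{1}{p}}\hat{w}$ is differentiable, property~8 of Lemma~\ref{props} gives $\tfrac{d}{dt}\Phi(t)=p\sum_k\langle U_k^{\frac{1}{p}}\tfrac{d\hat{w}}{dt},\,U_k^{\frac{1}{p}}\hat{w}\rangle_p$. Applying the $(p,\epsilon_k)$-powered identity with $\chi=\tfrac{d\hat{w}}{dt}$, $y=\hat{w}$ rewrites each summand as $\langle U_k\tfrac{d\hat{w}}{dt},\hat{w}\rangle_p-\epsilon_k\!\left(\tfrac{d\hat{w}}{dt},\hat{w}\right)$; then summing over $k$ and using linearity of the real $p$-form in its first argument (property~1 of Lemma~\ref{props}), the commutation of $U_k$ with $\tfrac{d}{dt}$, and the residual identity $\sum_kU_k\tfrac{d\hat{w}}{dt}=\tfrac{d}{dt}\sum_kU_k\hat{w}=\mathcal{R}_{eq}+A(t)(w_\theta)-A(t)(w)$, I obtain
\[
\frac{d}{dt}\Phi(t)=p\langle\mathcal{R}_{eq},\hat{w}\rangle_p+p\left\langle A(t)(w_\theta)-A(t)(w),\hat{w}\right\rangle_p-p\sum_{k=1}^{K}\epsilon_k\!\left(\frac{d\hat{w}}{dt},\hat{w}\right).
\]

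Next I would bound the right-hand side termwise. By property~4 of Lemma~\ref{props} and the Young inequality, $p\langle\mathcal{R}_{eq},\hat{w}\rangle_p\le\|\mathcal{R}_{eq}\|^p+(p-1)\|\hat{w}\|^p\le\|\mathcal{R}_{eq}\|^p+R(p-1)\Phi(t)$. Applying $(p,\psi(t))$-submonotonicity w.r.t.\ the norm $\|\cdot\|_{\sum U_k^{\frac{1}{p}}}$ to the pair $(w_\theta,w)$ (and using $U_k^{\frac{1}{p}}w_\theta-U_k^{\frac{1}{p}}w=U_k^{\frac{1}{p}}\hat{w}$) gives $p\langle A(t)(w_\theta)-A(t)(w),\hat{w}\rangle_p\le p\psi(w_\theta,w)+p\Lambda(w_\theta,w)\Phi(t)$. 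Finally, subordination of $\psi(t)$ to $B(t)$ bounds $|\psi(w_\theta,w)|\le\gamma(\cdot,w_\theta,w)\,\rho(\mathcal{R}_{bn})$, and subordination of each $\epsilon_k$ to $B(t)$ bounds $\left|\epsilon_k\!\left(\tfrac{d\hat{w}}{dt},\hat{w}\right)\right|\le\tilde{\gamma}_k\!\left(\tfrac{d\hat{w}}{dt},w_\theta,w\right)\tilde{\rho}_k(\mathcal{R}_{bn})$. Collecting everything yields a linear differential inequality $\tfrac{d}{dt}\Phi(t)\le f(t)+\left(R(p-1)+p\Lambda(t)\right)\Phi(t)$, with $f$ the sum of the residual/boundary source terms. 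Integrating from $0$ to $t$, applying the Gr\"onwall--Bellman lemma, bounding the time integrals of $f$ by $\|\mathcal{R}_{eq}\|^p_{L^p(I;X)}$, $\|\gamma\|_{C(\overline{I})}\|\rho(\mathcal{R}_{bn})\|_{L^1(I)}$ and the analogous products for the $\epsilon_k$-terms, and using $\Phi(0)=\|\mathcal{R}_{in}\|^p_{\sum U_k^{\frac{1}{p}}}$, gives $\Phi(t)\le\mathcal{C}\,e^{R(p-1)t}e^{p\|\Lambda(\cdot,w_\theta(\cdot),w(\cdot))\|_{L^1(I)}}$ with $\mathcal{C}$ as in the statement; one further integration in $t$ over $[0,T]$ together with $\|\hat{w}\|^p\le R\Phi$ produces the claimed bound on $\mathcal{E}$.

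I expect the main obstacle to be organisational rather than conceptual. The one genuinely delicate step is justifying the term-by-term differentiation of $\Phi$: this is precisely what the hypotheses that $U_k$ and $U_k^{\frac{1}{p}}$ commute with $\tfrac{d}{dt}$, together with the regularity $w\in C(\overline{I};X)\cap C^1(I;X)$, buy us through property~8 of Lemma~\ref{props} (the chain rule is not directly applicable here, exactly as in the proof of that part of Lemma~\ref{props}). The remaining care is in keeping the two distinct subordination mechanisms apart — one for $\psi$ and one for each $\epsilon_k$, the latter carrying $\tfrac{d\hat{w}}{dt}$ inside its $\gamma$-factor, which is why $\tilde{\gamma}_k\!\left(\tfrac{d\hat{w}}{dt}(\cdot),w_\theta(\cdot),w(\cdot)\right)\in C(\overline{I})$ (hence $L^1(I)$) is imposed — and in invoking the norm equivalence $\|\cdot\|^p\le R\|\cdot\|^p_{\sum U_k^{\frac{1}{p}}}$ both to close the Gr\"onwall inequality and to pass from $\Phi$ back to $\mathcal{E}$.
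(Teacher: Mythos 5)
Your proposal is correct and is exactly the adaptation of the proof of Theorem~\ref{th2} that the paper intends (the paper gives no details, saying only that ``the proof is similar to the one of Theorem~\ref{th2}''): differentiate $\Phi=\|\hat{w}\|^p_{\sum U_k^{1/p}}$ via property~8 of Lemma~\ref{props}, trade each summand for $\langle U_k\tfrac{d\hat{w}}{dt},\hat{w}\rangle_p$ minus an $\epsilon_k$-term using the $(p,\epsilon_k)$-powered identity, insert the residual identity, and close with Young's inequality, the two subordination hypotheses, Gr\"onwall, and $\|\cdot\|^p\le R\|\cdot\|^p_{\sum U_k^{1/p}}$. One bookkeeping caveat: the final passage from $\Phi$ back to $\|\hat{w}\|^p$ costs an extra factor $R$, so your argument actually delivers $\mathcal{E}\le R\,\mathcal{C}\left(\tfrac{e^{R(p-1)T}-1}{R(p-1)}\right)e^{p\|\Lambda\|_{L^1(I)}}=\mathcal{C}\left(\tfrac{e^{R(p-1)T}-1}{p-1}\right)e^{p\|\Lambda\|_{L^1(I)}}$, which agrees with the displayed constant only when $R=1$ and implies it only when $R\le 1$; this looks like a slip in the theorem's constant rather than in your reasoning, but you should not claim to recover the stated bound verbatim.
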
 
        The proof is similar to the one of Theorem \ref{th2}. 
    \subsection{Hyperbolic-type equation}\label{hypss}
        We apply the classical technique to reduce hyperbolic-type equations to the parabolic-type (see, for instance, \cite{brezis}). We consider the Hilbert space case only. Also, for simplicity, we work with only one $(2,\epsilon)$-powered operator, assume submonotonicity/subordination on the whole domain and put $q=p$. Consider problem (\ref{eqhyp}) in a Hilbert space $X$.
        \begin{equation}
            \label{eqhyp}
            \begin{aligned}
                &\frac{d^2w}{dt^2}=-Uw+F(t)(w)+A(t)\left(\frac{dw}{dt}\right)\\
                &w\Big|_{t=0}=w_0,~\frac{dw}{dt}\Big|_{t=0}=w_{t,0}\\
                &B_1(t)(w)=0,~B_2(t)\left(\frac{dw}{dt}\right)=0\\
            \end{aligned}
        \end{equation}
        where $A(t):\mathcal{D}(A(t))\to X$ and $B_{1,2}(t):\mathcal{D}(B_{1,2}(t))\to Y_{1,2}$ are nonlinear operators, $\mathcal{D}(F(t))\subset \mathcal{D}(B_{1}(t))\subset X$ and $\mathcal{D}(A(t))\subset \mathcal{D}(B_{2}(t))\subset X$  are subspaces, $Y_{1,2}$ are Banach spaces. Also, $U$ is $(2,\epsilon)$-powered, $\mathcal{D}(F(t))\subset\mathcal{D}(U)\subset\mathcal{D}\left(U^{\frac{1}{2}}\right)$, $\mathcal{D}(A(t))\subset\mathcal{D}\left(U^{\frac{1}{2}}\right)$, $U$ and $U^{\frac{1}{2}}$ commute with $\frac{d}{dt}$, $U^{\frac{1}{2}}$ commute with $\Big|_{t=0}$. For a solution of (\ref{eqhyp}), we assume $w\in C^1(\overline{I};X)\cap C^2(I;X)$.

        Consider a neural network $w_\theta$ with parameter $\theta$, approximating solution $w$ of (\ref{eqhyp}), and the PINN residuals
        \begin{equation}
            \label{reshyp}
            \begin{aligned}
                &\mathcal{R}_{eq}=\frac{d^2w_\theta}{dt^2}+Uw_\theta-F(t)(w_\theta)-A(t)\left(\frac{dw_\theta}{dt}\right)\\
                &\mathcal{R}_{in}=w_\theta\Big|_{t=0}-w_0\\
                &\mathcal{R}_{in,t}=\frac{dw_{\theta}}{dt}\Big|_{t=0}-w_{t,0}\\
                &\mathcal{R}_{bn}=B_1(t)(w_\theta)\\
                &\mathcal{R}_{bn,t}=B_2(t)\left(\frac{dw_\theta}{dt}\right)
            \end{aligned}
        \end{equation}
        The total error is given by formula
        \begin{equation}
            \label{totalerrhyp}
            \mathcal{E}:=\int_{0}^{T}\|w_\theta-w\|^{2}dt
        \end{equation}
        \begin{theorem}\label{thhyp}
            Let us given a solution $w$ of problem (\ref{eqhyp}), and a neural network $w_\theta$ with residuals (\ref{reshyp}) and total error (\ref{totalerrhyp}). Let also $-A(t)$ be $(2,\psi(t))$-submonotone, $F(t)$ be conditionally Lipschitz w.r.t. to ${\rm Id}+U^{\frac{1}{2}}$-norm (see Remark \ref{remhyp}), with $(2,\epsilon)$-powered operator $U$, where $\epsilon$ is subordinate to $B_1(t)$ and $B_2(t)$, $\psi(t)$ is subordinate to $B_2(t)$, for every $t\in\overline{I}$. Also, let respective $\tilde{\gamma}\left(w_{\theta}(\cdot),w(\cdot),\frac{dw_\theta}{dt}(\cdot),\frac{dw}{dt}(\cdot)\right)$,$\gamma(\cdot,w_\theta(\cdot),w(\cdot))\in C(\overline{I})$, $\rho(t)\equiv \rho$, $\tilde{\rho}_{1,2}(t)\equiv \tilde{\rho}_{1,2}$, $\Lambda_A\left(\cdot,\frac{dw_\theta}{dt}(\cdot),\frac{dw}{dt}(\cdot)\right)\in L^1(I)$ and $\Lambda_F\left(\cdot,w_\theta(\cdot),w(\cdot)\right)\in L^2(I)$. Then we have an estimate
            \begin{equation*}
                \mathcal{E}\leq \mathcal{C}\frac{(e^{2T}-1)}{2}e^{\left(2\left\|\Lambda_A\left(\cdot,\frac{dw_\theta}{dt}(\cdot),\frac{dw}{dt}(\cdot)\right)\right\|_{L^1(I)}+\left\|\Lambda_F\left(\cdot,w_\theta(\cdot),w(\cdot)\right)\right\|_{L^1(I)}\right)},
            \end{equation*}
            where
            \begin{equation*}
                \begin{aligned}
                    &\mathcal{C}=\|\mathcal{R}_{eq}\|_{L^2(I; X)}^2+\|\mathcal{R}_{in}\|^2+\|\mathcal{R}_{in,t}\|^2+\|U^{\frac{1}{2}}\mathcal{R}_{in}\|^2+2\|\gamma(\cdot,w_{2,\theta}(\cdot),w_2(\cdot))\|_{C(\overline{I})}\|\rho\left(\mathcal{R}_{bn,t}\right)\|_{L^1(I)}+\\
                    &+2\|\tilde{\gamma}(w_{1,\theta}(\cdot),w_1(\cdot),w_{2,\theta}(\cdot),w_2(\cdot))\|_{C(\overline{I})}\left(\|\tilde{\rho}_1\left(\mathcal{R}_{bn}\right)\|_{L^1(I)}+\|\tilde{\rho}_2\left(\mathcal{R}_{bn,t}\right)\|_{L^1(I)}\right)
                \end{aligned}
            \end{equation*}
        \end{theorem}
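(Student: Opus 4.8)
The plan is to use the classical reduction of the second-order equation to a first-order system and then run the energy method, exploiting that here $p=2$, so the real $2$-form is $\langle\cdot,\cdot\rangle_2={\rm Re}\langle\cdot,\cdot\rangle$ and in particular symmetric. First I would set $w_1:=w$, $w_2:=\frac{dw}{dt}$, $w_{1,\theta}:=w_\theta$, $w_{2,\theta}:=\frac{dw_\theta}{dt}$ and $\hat w_i:=w_{i,\theta}-w_i$. From (\ref{eqhyp}) and (\ref{reshyp}) one gets $\frac{d\hat w_1}{dt}=\hat w_2$ and
\begin{equation*}
\frac{d\hat w_2}{dt}=\mathcal{R}_{eq}-U\hat w_1+\bigl(F(t)(w_{1,\theta})-F(t)(w_1)\bigr)+\bigl(A(t)(w_{2,\theta})-A(t)(w_2)\bigr).
\end{equation*}
The natural Lyapunov functional is $E(t):=\|\hat w_1\|^2+\|U^{\frac{1}{2}}\hat w_1\|^2+\|\hat w_2\|^2$; it dominates the integrand of the total error, so $\mathcal{E}=\int_0^T\|\hat w_1\|^2\,dt\le\int_0^T E(t)\,dt$, and its initial value $E(0)=\|\mathcal{R}_{in}\|^2+\|U^{\frac{1}{2}}\mathcal{R}_{in}\|^2+\|\mathcal{R}_{in,t}\|^2$ produces the first three terms of $\mathcal{C}$ (using that $U^{\frac{1}{2}}$ commutes with evaluation at $t=0$).

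Next I would differentiate $E$ using properties 7 and 8 of Lemma \ref{props} with $p=2$ (legitimate since $U^{\frac{1}{2}}$ is linear, commutes with $\frac{d}{dt}$, and $w\in C^1(\overline I;X)\cap C^2(I;X)$ is assumed): $\tfrac12\tfrac{d}{dt}E$ is the sum of $\langle\hat w_2,\hat w_1\rangle_2$, $\langle U^{\frac{1}{2}}\hat w_2,U^{\frac{1}{2}}\hat w_1\rangle_2$ and $\langle\frac{d\hat w_2}{dt},\hat w_2\rangle_2$; inserting the expression for $\frac{d\hat w_2}{dt}$ replaces the last summand by $\langle\mathcal{R}_{eq},\hat w_2\rangle_2-\langle U\hat w_1,\hat w_2\rangle_2+\langle F(t)(w_{1,\theta})-F(t)(w_1),\hat w_2\rangle_2+\langle A(t)(w_{2,\theta})-A(t)(w_2),\hat w_2\rangle_2$. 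The crucial step is to apply the $(2,\epsilon)$-powered identity $\langle U\hat w_1,\hat w_2\rangle_2=\epsilon(\hat w_1,\hat w_2)+\langle U^{\frac{1}{2}}\hat w_1,U^{\frac{1}{2}}\hat w_2\rangle_2$: since $\langle\cdot,\cdot\rangle_2$ is symmetric, $\langle U^{\frac{1}{2}}\hat w_1,U^{\frac{1}{2}}\hat w_2\rangle_2$ cancels $\langle U^{\frac{1}{2}}\hat w_2,U^{\frac{1}{2}}\hat w_1\rangle_2$, leaving only $-\epsilon(\hat w_1,\hat w_2)$.

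It remains to bound the survivors. Cauchy--Schwarz and Young give $\langle\hat w_2,\hat w_1\rangle_2\le\tfrac12 E$ and $\langle\mathcal{R}_{eq},\hat w_2\rangle_2\le\tfrac12\|\mathcal{R}_{eq}\|^2+\tfrac12 E$; the conditional Lipschitz bound for $F(t)$ w.r.t. the ${\rm Id}+U^{\frac{1}{2}}$-norm together with Young bounds the $F$-term by $\tfrac12\Lambda_F(t)\,E$ (the norm-equivalence constant absorbed into $\Lambda_F$); and the $(2,\psi(t))$-submonotonicity of $-A(t)$ bounds the $A$-term by $\psi(t)(w_{2,\theta},w_2)+\Lambda_A(t)\,E$. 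Finally the subordination hypotheses convert boundary defects into residuals: $B_1(w_1)=0$, $B_1(w_{1,\theta})=\mathcal{R}_{bn}$, $B_2(w_2)=0$, $B_2(w_{2,\theta})=\mathcal{R}_{bn,t}$, so $|\psi(t)(w_{2,\theta},w_2)|\le\gamma\,\rho(\mathcal{R}_{bn,t})$ and $|\epsilon(\hat w_1,\hat w_2)|\le\tilde\gamma\bigl[\tilde\rho_1(\mathcal{R}_{bn})+\tilde\rho_2(\mathcal{R}_{bn,t})\bigr]$ (using $\rho(t)\equiv\rho$, $\tilde\rho_{1,2}(t)\equiv\tilde\rho_{1,2}$). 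Collecting terms yields
\begin{equation*}
\tfrac{d}{dt}E\le\bigl(2+\Lambda_F(t)+2\Lambda_A(t)\bigr)E+\|\mathcal{R}_{eq}\|^2+2\gamma\,\rho(\mathcal{R}_{bn,t})+2\tilde\gamma\bigl[\tilde\rho_1(\mathcal{R}_{bn})+\tilde\rho_2(\mathcal{R}_{bn,t})\bigr].
\end{equation*}
The Gr\"onwall--Bellman lemma on $[0,t]$, with $\int_0^t(\Lambda_F+2\Lambda_A)\,d\tau\le\|\Lambda_F\|_{L^1(I)}+2\|\Lambda_A\|_{L^1(I)}$, gives $E(t)\le\mathcal{C}\,e^{2t}e^{\|\Lambda_F\|_{L^1(I)}+2\|\Lambda_A\|_{L^1(I)}}$, where $\mathcal{C}$ is $E(0)$ plus the $t$-integral of the remaining right-hand side with the $C(\overline I)$-norms of $\gamma,\tilde\gamma$ pulled out of the integrals; this is exactly the stated $\mathcal{C}$. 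Integrating in $t$ over $I$ and using $\int_0^T e^{2t}\,dt=\frac{e^{2T}-1}{2}$ gives the claimed bound on $\mathcal{E}$.

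The point requiring care is the energy cancellation in the second step: one must pair $\frac{d}{dt}\|U^{\frac{1}{2}}\hat w_1\|^2$ against the $-\langle U\hat w_1,\hat w_2\rangle_2$ produced by the equation and apply the $(2,\epsilon)$-powered identity in the correct direction, which succeeds precisely because we work in a Hilbert space (symmetry of the real $2$-form) and $U^{\frac{1}{2}}$ commutes with $\frac{d}{dt}$; everything else is bookkeeping of constants. A secondary subtlety is verifying that the assumed regularity and the commutation hypotheses legitimately justify differentiating each term of $E$ and that $U^{\frac{1}{2}}$ commuting with evaluation at $t=0$ produces the $\|U^{\frac{1}{2}}\mathcal{R}_{in}\|^2$ term.
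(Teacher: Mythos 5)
Your proposal is correct and follows essentially the same route as the paper: the paper likewise reduces (\ref{eqhyp}) to the first-order system in $\hat w_1,\hat w_2$ and works in $\tilde X=\mathcal{D}(U^{\frac{1}{2}})\times X$ with exactly your energy $\|y_1\|^2+\|y_2\|^2+\|U^{\frac{1}{2}}y_1\|^2$, then declares the rest "similar to Theorem \ref{th2}". You usefully make explicit the step the paper leaves implicit --- the cancellation of $\langle U^{\frac{1}{2}}\hat w_2,U^{\frac{1}{2}}\hat w_1\rangle_2$ against $\langle U\hat w_1,\hat w_2\rangle_2$ via the $(2,\epsilon)$-powered identity and the symmetry of the real $2$-form --- before applying submonotonicity, subordination, Young and Gr\"onwall exactly as in the parabolic case.
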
 
        \begin{proof}
            Again we use the denotation $\hat{w}:=w_\theta-w$. 

            Let us consider $w_{\theta,1}=w_\theta$, $w_1=w$, $\hat{w}_1=\hat{w}$, $w_{\theta,2}=\frac{dw_\theta}{dt}$, $w_2=\frac{dw}{dt}$, $\hat{w}_2=\frac{d\hat{w}}{dt}$. Then we have a parabolic-type equation, 
            \begin{equation*}
                \begin{cases}
                    &\frac{d\hat{w}_1}{dt}=\hat{w}_2\\
                    &\frac{d\hat{w}_2}{dt}=\mathcal{R}_{eq}-U\hat{w}_1+F(t)(w_{\theta,1})-F(t)(w_1)+A(t)(w_{\theta,2})-A(t)(w_2)
                \end{cases}
            \end{equation*}
            Hence, one can deal with the space $\tilde{X}=\mathcal{D}(U^{\frac{1}{2}})\times X$ endowed the following norm
            \begin{equation*}
                \left\|\begin{pmatrix}
                    y_1\\
                    y_2\\
                \end{pmatrix}\right\|^2_{\tilde{X}}:=\|y_1\|^2+\|y_2\|^2+\|U^{\frac{1}{2}}y_1\|^2,
            \end{equation*}
            The rest of the proof is similar to the one of Theorem \ref{th2}.
        \end{proof}
        \begin{remark}
            \label{remhyp}
            Conditionally Lipschitz continuity of $F$ w.r.t. ${\rm Id}+U^{\frac{1}{2}}$-norm stands for 
            \begin{equation*}
                \|F(\chi_1)-F(\chi_2)\|^2\leq \Lambda_{F}(\chi_1,\chi_2)\left(\|\chi_1-\chi_2\|^2+\|U^{\frac{1}{2}}\chi_1-U^{\frac{1}{2}}\chi_2\|^2\right)
            \end{equation*}
        \end{remark}
    \subsection{Elliptic equation}\label{ellss}
        Consider problem (\ref{eqell}) in a $\mathbb{R}$-smooth Banach space $X$
        \begin{equation}
            \label{eqell}
            \begin{aligned}
                &A(y)=0\\
                &B(y)=0,
            \end{aligned}
        \end{equation}
        where $A:\mathcal{D}(A)\to X$ and $B:\mathcal{D}(B)\to Y$ are nonlinear operators, $\mathcal{D}(A)\subset \mathcal{D}(B)\subset X$ are subspaces, $Y$ is a Banach space. 

        Let us consider a neural network $y_\theta$ with parameter $\theta$, approximating solution $y$ of (\ref{eqell}) and the PINN residuals
        \begin{equation}
            \label{resell}
            \begin{aligned}
                &\mathcal{R}_{eq}=A(y_\theta),\\
                &\mathcal{R}_{bn}=B(y_\theta).
            \end{aligned}
        \end{equation}
        Total error is
        \begin{equation}
            \label{totalerrell}
            \mathcal{E}:=\|y_\theta-y\|^q,~q>1.
        \end{equation}
        \begin{theorem}
            \label{theq}
            Let us given a solution $y$ of problem (\ref{eqell}), and a neural network $y_\theta$ with residuals (\ref{resell}) and total error (\ref{totalerrell}). Let also $A$ be $(p,\psi)$-coercive, with $\psi$ subordinate to $B$. Then we have an estimate
            \begin{equation*}
                \mathcal{E}\leq p^{\frac{q}{p}}\left(\gamma(y_\theta,y)\rho(\mathcal{R}_{bn})+\frac{1}{p}\Lambda^p(y_\theta,y)\|\mathcal{R}_{eq}\|^p\right)^{\frac{q}{p}}
            \end{equation*}
        \end{theorem}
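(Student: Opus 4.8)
The plan is to unwind the definitions with the shorthand $\hat{y}:=y_\theta-y$ and then apply a single Young-type absorption, with no Grönwall step needed. First I would feed the pair $(y_\theta,y)$ into the defining inequality of $(p,\psi)$-coercivity, obtaining
\[
\|\hat{y}\|^{p}\le \psi(y_\theta,y)+\Lambda(y_\theta,y)\,\langle A(y_\theta)-A(y),\hat{y}\rangle_{p}.
\]
Since $y$ solves (\ref{eqell}) we have $A(y)=0$ and $B(y)=0$, so by the definition of the residuals (\ref{resell}) the element $A(y_\theta)-A(y)$ is exactly $\mathcal{R}_{eq}$, and $B(y_\theta)-B(y)=\mathcal{R}_{bn}$, which is the argument that will enter $\rho$ below.

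Next I would bound the two summands on the right separately. For the residual term, property 4 of Lemma \ref{props} gives the Cauchy–Schwarz-type estimate $|\langle\mathcal{R}_{eq},\hat{y}\rangle_{p}|\le\|\mathcal{R}_{eq}\|\,\|\hat{y}\|^{p-1}$. For the $\psi$-term, subordination of $\psi$ to $B$ (Definition \ref{subordinate}) yields $|\psi(y_\theta,y)|\le\gamma(y_\theta,y)\,\rho(\mathcal{R}_{bn})$. Combining these,
\[
\|\hat{y}\|^{p}\le \gamma(y_\theta,y)\,\rho(\mathcal{R}_{bn})+\Lambda(y_\theta,y)\,\|\mathcal{R}_{eq}\|\,\|\hat{y}\|^{p-1}.
\]

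Then I would apply Young's inequality with conjugate exponents $p$ and $p/(p-1)$ to the last summand, viewing it as the product of $\Lambda(y_\theta,y)\|\mathcal{R}_{eq}\|$ with $\|\hat{y}\|^{p-1}$, so that the second factor raised to the power $p/(p-1)$ returns precisely $\|\hat{y}\|^{p}$. This produces the bound $\tfrac1p\Lambda^{p}(y_\theta,y)\|\mathcal{R}_{eq}\|^{p}+\tfrac{p-1}{p}\|\hat{y}\|^{p}$; moving $\tfrac{p-1}{p}\|\hat{y}\|^{p}$ to the left-hand side, multiplying through by $p$, and finally raising both sides to the power $q/p$ gives the asserted estimate for $\mathcal{E}=\|\hat{y}\|^{q}$. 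The only point requiring a little care is arranging the Young split with no free scaling parameter so that the $\|\hat{y}\|^{p}$ term is absorbed leaving exactly the coefficient $1/p$ in front of $\Lambda^{p}\|\mathcal{R}_{eq}\|^{p}$; the degenerate case $\hat{y}=0$ is trivial, and the remainder is bookkeeping. I do not expect a genuine obstacle here — unlike the parabolic case, coercivity already controls $\|\hat{y}\|$ directly, so there is nothing analogous to the Grönwall–Bellman argument.
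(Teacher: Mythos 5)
Your argument is correct and is exactly the ``standard procedure'' the paper alludes to for Theorem~\ref{theq}: coercivity with $A(y)=0$, $B(y)=0$ to identify $A(y_\theta)-A(y)=\mathcal{R}_{eq}$ and $B(y_\theta)=\mathcal{R}_{bn}$, property~4 of Lemma~\ref{props}, subordination, and the parameter-free Young split $\Lambda\|\mathcal{R}_{eq}\|\cdot\|\hat y\|^{p-1}\le\tfrac1p\Lambda^p\|\mathcal{R}_{eq}\|^p+\tfrac{p-1}{p}\|\hat y\|^p$ followed by absorption and raising to the power $q/p$. The paper gives no further detail than this, so your write-out matches its intended proof.
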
 
        The proof follows standard procedures. It relies on conditions of the theorem and the Young inequality.

\section{PINN's residuals upper bound}\label{residss}
    In \cite{deryckjagtapmishra}, the authors proved the existence of a two-layer tahn neural network approximation for a sufficiently regular solution of the Navier-Stokes equation. Also, they obtained $L^2$ upper bounds for the residuals depending on the neural network width. The core theorem underlying this result claims the existence of a tahn neural network, approximating a function from $H^\sigma(\Omega)$, where $\Omega\subset\mathbb{R}^m$ is an integer right parallelepiped and $\sigma\geq 3$. We obtain a similar theorem in $W^{\sigma,p}(\Omega)$. 

    Let $\Omega\subset\mathbb{R}^m$ be a convex bounded domain, and let $p\geq 1$. First, we need to obtain the Bramble-Hilbert lemma in $W^{\sigma,p}(\Omega)$, similar to the result of \cite{verfurth}. R. Verf\"urth in \cite{verfurth} noted that the core point for such extension is a Poincar\'e inequality with a ''good'' constant in $W^{1,p}(\Omega)$ for zero mean value functions. However, for such functions (i.e. $\frac{1}{|\Omega|}\int_{\Omega}ydx=0$), a constant is known only for one-dimensional cases \cite{gerasimovnazarov}. We provide slightly another idea: the mean value in $L^2(\Omega)$ is a projection to the set of constant functions, and for a function with vanishing projection, there is a ''good'' Poincar\'e inequality. The projection operator is defined as follows.
    \begin{statement}
        Let $p\geq 1$, and $\Omega\subset\mathbb{R}^m$ be a convex bounded domain. Then there exists an operator $\mathcal{J}_p: L^p(\Omega)\to \mathbb{R}$,
        \begin{equation*}
            \mathcal{J}_p(y):=\begin{cases}
                \frac{1}{|\Omega|}\int_{\Omega}ydx,~&1\leq p<2,\\
                {\rm arg}\min_{s\in\mathbb{R}}\|y-s\|_{L^p(\Omega)},~&p\geq 2
            \end{cases}
        \end{equation*}
        with the following properties:
        \begin{equation*}
            \begin{aligned}
                &\mathcal{J}_p(-y)=-\mathcal{J}_p(y),\\
                &\mathcal{J}_p(y-\mathcal{J}_p(y))=0.
            \end{aligned}
        \end{equation*}
    \end{statement}
    \begin{proof}
        If $1\leq p<2$, the result is clear. Let $p\geq 2$. Given fixed element $y\in L^p(\Omega)$, let us consider the following function 
        \begin{equation*}
            h:\mathbb{R}\to \mathbb{R},~h(s):=\|y-s\|_{L^p(\Omega)}
        \end{equation*}
        Then $h$ is a differentiable convex function. Since
        \begin{equation*}
            h(s)\geq |s||\Omega|^{\frac{1}{p}}-\|y\|
        \end{equation*}
        we have that $h(+\infty)=h(-\infty)=+\infty$, and, therefore, $h$ attends a global minimum. We have two possible cases. 

        If $y\equiv s_0$ for some $s_0\in\mathbb{R}$, then we have an exactly one minimum $h(s_0)=0$. If $y\not\equiv const$, then the equality condition in the Minkowski inequality implies that $h$ is strictly convex. Hence, $h$ attends its global minimum exactly at one point. 

        Thus, an operator
        \begin{equation*}
            \mathcal{J}_p(y):={\rm arg}\min_{s\in\mathbb{R}}\|y-s\|_{L^p(\Omega)}
        \end{equation*}
        is correctly defined.

        Also, we have
        \begin{equation*}
            \|-y-(-\mathcal{J}_p(y))\|_{L^p(\Omega)}=\|y-\mathcal{J}_p(y)\|_{L^p(\Omega)}\leq \|-y-s\|_{L^p(\Omega)}
        \end{equation*}
        and
        \begin{equation*}
            \|y-\mathcal{J}_p(y)\|_{L^p(\Omega)}\leq \|y-\mathcal{J}_p(y)-s\|_{L^p(\Omega)}
        \end{equation*}
        for every $s\in\mathbb{R}$. Hence, $\mathcal{J}_p$ has the desired properties. 
    \end{proof}

    We turn to the Poincar\'e inequality.
    \begin{lemma}(Poincar\'e inequality)
        For every $y\in W^{1,p}(\Omega)$ with $\mathcal{J}_p(y)=0$, the following inequality holds
        \begin{equation*}
            \|y\|_{L^p(\Omega)}\leq \pi_p{\rm diam}(\Omega)\|\nabla y\|_{L^p(\Omega)},
        \end{equation*} 
        where
        \begin{equation}    
            \label{poincare}
            \pi_p=\begin{cases}
                \frac{1}{2},~&p=1\\
                \pi^{\frac{2}{p}-2}2^{1-\frac{2}{p}},~&1<p<2\\
                \frac{p\sin\left(\frac{\pi}{p}\right)}{2\pi(p-1)^{\frac{1}{p}}},~&p\geq 2
            \end{cases}
        \end{equation}
    \end{lemma}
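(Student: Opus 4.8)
The plan is to reduce the inequality to one space dimension by the Payne-Weinberger slicing technique, treating the three ranges of $p$ separately, since $\mathcal{J}_p$ means something different in each.

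First, note that $\mathcal{J}_p(y)=0$ is a moment condition: for $1\le p<2$ it says $\int_\Omega y\,dx=0$, while for $p\ge 2$ the first-order optimality condition for $\min_s\|y-s\|_{L^p(\Omega)}$ gives $\int_\Omega|y|^{p-2}y\,dx=0$, i.e. $y\perp{\rm const}$ in the $L^p$-sense. Let $g$ be the relevant density ($g=y$ if $1\le p<2$, $g=|y|^{p-2}y$ if $p\ge 2$), so $\int_\Omega g\,dx=0$ in all cases. I would then run the classical Payne-Weinberger/Bebendorf bisection: cut $\Omega$ repeatedly by hyperplanes into finitely many convex pieces $\Omega_1,\dots,\Omega_N$, each of diameter $\le{\rm diam}(\Omega)$ and each contained in a slab of prescribed small width, choosing the cut at each stage so that $\int_{\Omega_i}g\,dx=0$ on every piece. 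Existence of one such cut follows from a continuity argument: for each unit vector $e$ take the volume-bisecting hyperplane $\{x\cdot e=c(e)\}$; the function $e\mapsto\int_{\{x\cdot e<c(e)\}}g\,dx$ is continuous and odd on the sphere (oddness uses $\int_\Omega g=0$), hence vanishes for some $e$. Summing $\|y\|_{L^p(\Omega)}^p=\sum_i\|y\|_{L^p(\Omega_i)}^p$ and letting the slab widths shrink, so that the $m$-dimensional Poincar\'e constant of each thin convex piece converges to the $1$D constant on its long segment of length $\ell\le{\rm diam}(\Omega)$, it remains to prove the one-dimensional inequality $\|u\|_{L^p(0,\ell)}\le\pi_p\,\ell\,\|u'\|_{L^p(0,\ell)}$ under the $1$D moment condition inherited on each piece.

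For the $1$D inequality: when $p=1$, a zero-mean $u$ on $(0,\ell)$ satisfies $\|u\|_{L^1}\le\tfrac{\ell}{2}\|u'\|_{L^1}$ with step-function extremal (alternatively one quotes Acosta-Dur\'an for the $m$-dimensional statement directly). When $p\ge 2$, the constraint $\int_0^\ell|u|^{p-2}u\,dt=0$ is exactly the one in the variational characterization of the first nonzero Neumann eigenvalue of the $1$D $p$-Laplacian $-(|u'|^{p-2}u')'$, which equals $(\pi_p^*/\ell)^p$ with $\pi_p^*=\tfrac{2\pi(p-1)^{1/p}}{p\sin(\pi/p)}$ and extremal the generalized sine $\sin_p$; hence the sharp constant is $\ell/\pi_p^*=\pi_p\ell$. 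When $1<p<2$, represent a zero-mean $u$ on $(0,\ell)$ as $u=\mathcal{S}u'$ with the explicit kernel $\mathcal{S}g(x)=\int_0^x g(t)\,dt-\tfrac{1}{\ell}\int_0^\ell\!\int_0^s g(t)\,dt\,ds$; then $\mathcal{S}$ maps $L^1(0,\ell)\to L^1(0,\ell)$ with norm $\le\ell/2$ and $L^2(0,\ell)\to L^2(0,\ell)$ with norm $\le\ell/\pi$, so by Riesz-Thorin interpolation $\|\mathcal{S}\|_{L^p\to L^p}\le(\ell/2)^{2/p-1}(\ell/\pi)^{2-2/p}=\pi_p\ell$ --- which is precisely where the exponents in $\pi_p$ come from.

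The main obstacle is the slicing step when $p\ge 2$: one must arrange the hyperplane cuts so that the nonlinear moment $\int_{\Omega_i}|y|^{p-2}y\,dx$ vanishes on every piece while simultaneously forcing the pieces to degenerate into needles, and then make rigorous the claim that the $m$-dimensional Poincar\'e constant of a thin convex body tends to the $1$D constant on its long segment. The single-step moment bisection is the oddness argument above, but coordinating it with the geometric decay of the cross-sections and justifying the limit is the technical heart of the Payne-Weinberger method and has to be carried out with care. By contrast $p=1$ and $p=2$ may simply be cited (Acosta-Dur\'an; Payne-Weinberger, with Bebendorf's correction), and for $1<p<2$ the interpolation above avoids any nonlinear bookkeeping, so only $p>2$ genuinely requires the full machinery.
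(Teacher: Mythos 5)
Your proposal identifies exactly the two pieces of genuine content in this lemma and gets both right: the observation that for $p\ge 2$ the condition $\mathcal{J}_p(y)=0$ is equivalent, via the first--order optimality condition for $s\mapsto\|y-s\|_{L^p(\Omega)}$, to the nonlinear moment condition $\int_\Omega |y|^{p-2}y\,dx=0$; and the Riesz--Thorin interpolation between the $L^1$ and $L^2$ endpoint constants for $1<p<2$ (your exponent bookkeeping $(1/2)^{2/p-1}(1/\pi)^{2-2/p}=2^{1-2/p}\pi^{2/p-2}$ matches (\ref{poincare})). This is precisely the skeleton of the paper's proof. The difference is one of economy: for $p=1$, $p=2$ and $p\ge 2$ the paper simply cites Acosta--Dur\'an, Payne--Weinberger/Bebendorf, and Esposito--Nitsch--Trombetti respectively --- the last reference proves the convex-domain inequality with constant $\frac{p\sin(\pi/p)}{2\pi(p-1)^{1/p}}\,{\rm diam}(\Omega)$ under exactly the moment condition $\int_\Omega|y|^{p-2}y\,dx=0$ --- whereas you attempt to reprove these results by running the Payne--Weinberger slicing yourself.

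That extra ambition is where the one real gap sits, and you have half-named it yourself. The thin convex pieces produced by the bisection do not degenerate to the \emph{unweighted} one-dimensional problem on a segment: they degenerate to a weighted 1D problem with weight $\omega^{m-1}$, where $\omega$ is the (concave) cross-sectional profile of the needle. The technical heart of Payne--Weinberger, Bebendorf, and Esposito--Nitsch--Trombetti is showing that the first nonzero Neumann eigenvalue of the weighted 1D ($p$-Laplacian) problem, over all admissible concave weights, is still bounded below by the unweighted value $(\pi_p\,\ell)^{-p}$ (suitably normalized); your sketch replaces this with ``converges to the 1D constant on its long segment,'' which elides the weight entirely. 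Since the paper outsources this to the cited references, you are in effect proposing to redo their hardest step without supplying it. If you instead simply cite those three papers (after your correct reduction of $\mathcal{J}_p(y)=0$ to the moment condition) and keep your interpolation argument for $1<p<2$, your proof coincides with the paper's and is complete.
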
 
    \begin{proof}
        For $p=2$, the inequality is a result of \cite{payneweinberger}, which was correctly proved in \cite{bebendorf}. For $p=1$, it was proved in \cite{acostaduran}. In \cite{bebendorf}, there is a remark that the proof in \cite{acostaduran} also contains a similar mistake, but the authors of \cite{acostaduran} corrected their text after the paper was published. 

        If $p\geq 2$, then $\mathcal{J}_p(y)=0$ implies $\int_{\Omega}y|y|^{p-2}dx=0$. Really, since $s=\mathcal{J}_p(y)=0$ is a stationary point of $h$, we have 
        \begin{equation*}
            0=\frac{dh}{ds}\Big|_{s=0}=-p\int_{\Omega}y|y|^{p-2}dx
        \end{equation*}
        For such functions, the inequality was proved in \cite{espositonitschtrombetti}.
        
        Finally, in case $1<p<2$, the constant $\pi_p$ can be obtained by the Riecz-Thorin interpolation theorem between $L^1$ and $L^2$. 
    \end{proof}

    As a result, we have
    \begin{statement}(Bramble-Hilbert lemma in $W^{\sigma+1,p}(\Omega)$)
        Let $\Omega\subset\mathbb{R}^m$ be a convex bounded domain, $p\geq1$, $y\in W^{\sigma+1,p}(\Omega)$. Then there exists a polynomial $P_y$ in $m$ variables of degree at most $\sigma$ such that
        \begin{equation*}
            |y-P_y|_{W^{\nu,p}(\Omega)}\leq c_{\sigma+1,\nu,p}\left({\rm diam}(\Omega)\right)^{\sigma+1-\nu}|y|_{W^{\sigma+1,p}(\Omega)},~\forall 0\leq \nu\leq \sigma,
        \end{equation*}
        where
        \begin{equation}
            \label{bramblehilbert}
            c_{\sigma,\nu,p}\leq \pi^{\sigma-\nu}_{p}\binom{m+\nu-1}{\nu}^{\frac{1}{p}}\frac{\left((\sigma-\nu)!\right)^{\frac{1}{p}}}{\left(\left\lceil\frac{\sigma-\nu}{m}\right\rceil!\right)^{\frac{m}{p}}}
        \end{equation}
        and $\pi_p$ is defined in (\ref{poincare}).
    \end{statement}
    \begin{proof}
        As in \cite{verfurth}, we define polynomials $P_{y,\sigma}$,\dots,$P_{y,0}=:P_y$ recursively
        \begin{equation*}
            P_{y,\sigma}:=\sum_{\iota\in\mathbb{N}^m,|\iota|=\sigma}\frac{1}{\iota!}x^{\iota}\mathcal{J}_{p}(\partial^{\iota} y)
        \end{equation*}
        and
        \begin{equation*}
            P_{y,\nu-1}:=P_{y,\nu}+\sum_{\iota\in\mathbb{N}^m,|\iota|=\nu-1}\frac{1}{\iota!}x^{\iota}\mathcal{J}_{p}\left(\partial^{\iota} (y-\mathcal{J}_{p}(y))\right),~\sigma\geq \nu\geq 1.
        \end{equation*}
        As in \cite{verfurth}, with properties of $\mathcal{J}_{p}$, one can show that 
        \begin{equation*}
            \partial^\omega(P_{y,\sigma-\tilde{\sigma}})=P_{y,\sigma-\tilde{\sigma}-\nu}(\partial^\omega y)
        \end{equation*}
        and 
        \begin{equation*}
            \mathcal{J}_p\left(\partial^\omega(y-P_{y,\sigma-\tilde{\sigma}}(y))\right)=0,
        \end{equation*}
        for all $y\in W^{\tilde{\sigma},p}(\Omega)$, $0\leq \nu\leq \tilde{\sigma}$, and $\omega\in\mathbb{N}^m$ with $|\omega|:=\omega_1+\dots+\omega_m=\nu$. 

        The rest of the proof is similar to the one in \cite{verfurth}.
    \end{proof}

    As a corollary, arguing exactly as in \cite{deryckjagtapmishra}, one can obtain a theorem on the neural network approximation existence in $W^{\sigma,p}(\Omega)$.
    \begin{theorem}
        \label{thresid}
        Let $p\geq1$, $m\geq 2$, $\sigma\geq 3$, $\delta>0$, $a_j,b_j\in\mathbb{Z}$, $a_j<b_j$, for $1\leq j\leq m$, $\Omega=\prod_{j=1}^{m}[a_j,b_j]$ and $y\in W^{\sigma,p}(\Omega)$. Then for every $N\in\mathbb{N}$ with $N>5$ there exists a tanh neural network $y_{\theta,N}$ with two hidden layers, one of width at most $3\left\lceil\frac{\sigma}{2}\right\rceil\binom{\sigma+m-1}{m+1}+\sum_{j=1}^{m}(b_j-a_j)(N-1)$ and another of width at most $3\left\lceil\frac{m+2}{2}\right\rceil\binom{2m+1}{m+1}N^{m}\prod_{i=1}^{m}(b_j-a_j)$, such that for $\nu\in\{0,1,2\}$ it holds that,
        \begin{equation*}
            \|y-y_{\theta,N}\|_{W^{\nu,p}(\Omega)}\leq 2^\nu3^m\mathcal{A}_{\nu,\sigma,m,y}(1+\delta)\ln^\nu\left(\mathcal{B}_{\nu,\delta,m,y}N^{m+\sigma+2}\right)N^{-\sigma+\nu},
        \end{equation*}
        where
        \begin{equation*}
            \mathcal{B}_{\nu,\delta,m,y}=\frac{5\cdot 2^{\nu m}\max\left\{\prod_{j=1}^{m}(b_j-a_j),m\right\}\max\{\|y\|_{W^{\nu,\infty}(\Omega)},1\}}{3^m\delta\min\{1,\mathcal{A}_{\nu,\sigma,m,y}\}},
        \end{equation*}
        and
        \begin{equation*}
            \mathcal{A}_{\nu,\sigma,m,y}=\max_{0\leq l\leq \nu}c_{\sigma,l,p}\left(3\sqrt{m}\right)^{\sigma-l}|y|_{W^{\sigma,p}(\Omega)},
        \end{equation*}
        where $c_{\sigma,l,p}$ is defined in (\ref{bramblehilbert}). 

        Furthermore, the weights of $y_{\theta,N}$ scale as $O\left(N^{\max\left\{\frac{\sigma^2}{2},m\left(1+\frac{\sigma}{2}+\frac{m}{2}\right)\right\}}\right)$.
    \end{theorem}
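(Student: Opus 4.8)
The plan is to repeat the three–step tanh–network construction of \cite{deryckjagtapmishra} almost verbatim, feeding it the $W^{\sigma,p}$ Bramble–Hilbert estimate just proved in place of its $L^{2}$ counterpart and replacing every global Sobolev norm by the corresponding $W^{\cdot,p}$ norm. The only genuinely new analytic input is the $W^{1,p}$ Poincar\'e inequality already established; everything downstream is combinatorics and bookkeeping.

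\emph{Step 1: local polynomial approximation.} I would subdivide $\Omega$ into the $N^{m}\prod_{j}(b_j-a_j)$ congruent sub-cubes of side $1/N$ obtained by cutting each unit edge of the integer lattice into $N$ equal parts; each such cube $Q$ has $\mathrm{diam}(Q)=\sqrt m/N$. On each $Q$ the Bramble–Hilbert lemma (used with ``$\sigma$'' replaced by ``$\sigma-1$'') provides a polynomial $P_Q$ of degree $\le\sigma-1$ with $|y-P_Q|_{W^{\nu,p}(Q)}\le c_{\sigma,\nu,p}(\sqrt m/N)^{\sigma-\nu}|y|_{W^{\sigma,p}(Q)}$ for $0\le\nu\le\sigma-1$. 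Since the $Q$'s are essentially disjoint, raising to the $p$-th power and summing over them collapses the right-hand side into the global seminorm, so the piecewise polynomial $P$ with $P|_{Q}=P_Q$ satisfies $|y-P|_{W^{\nu,p}(\Omega)}\le c_{\sigma,\nu,p}(\sqrt m/N)^{\sigma-\nu}|y|_{W^{\sigma,p}(\Omega)}$. This is where the constant $\mathcal A_{\nu,\sigma,m,y}$ of the statement is assembled: the factor $(3\sqrt m)^{\sigma-l}$ absorbs the $\sqrt m$ from the diameter together with the overlap constant of the partition of unity introduced next.

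\emph{Step 2: tanh realisation and error accounting.} Exactly as in \cite{deryckjagtapmishra}, I would build a partition of unity $\{\varphi_Q\}$, each $\varphi_Q$ a tensor product of one–dimensional bumps, each bump a difference of two shifted $\tanh$'s with slope polynomial in $N$ chosen so that $\varphi_Q$ is $W^{\nu,\infty}$-close to $\mathbf{1}_{Q}$ to any prescribed tolerance; the breakpoints in direction $j$ cost $(b_j-a_j)(N-1)$ first–layer neurons, producing the $\sum_{j}(b_j-a_j)(N-1)$ term. The monomials $x^{\iota}$, $|\iota|\le\sigma-1$, are produced by the $\tanh$ approximate–multiplication network of \cite{deryckjagtapmishra}, which contributes the $3\lceil\sigma/2\rceil\binom{\sigma+m-1}{m+1}$ summand in the first layer and the $3\lceil(m+2)/2\rceil\binom{2m+1}{m+1}N^{m}\prod_{j}(b_j-a_j)$ bound in the second; the second hidden layer multiplies each monomial block by $\varphi_Q$ and the coefficient $\mathcal J_{p}(\partial^{\iota}y)$ and sums over $Q$ and $\iota$. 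Choosing the multiplication accuracy $\asymp N^{-\sigma}$ makes the $W^{\nu,\infty}(\Omega)$ error of this step a $\delta$-fraction of the Step~1 error; passing from $W^{\nu,\infty}(\Omega)$ to $W^{\nu,p}(\Omega)$ on the bounded domain costs the factor $\left(\prod_{j}(b_j-a_j)\right)^{1/p}$, which is absorbed into $\mathcal B_{\nu,\delta,m,y}$. The $\ln^{\nu}(\cdot)$ factors come from the accuracy–width trade-off of the multiplication network, the $2^{\nu}$ from $\nu$ applications of the Leibniz rule to $\varphi_Q\cdot(\text{monomial})$, and the $3^{m}$ from the pointwise bound $\sum_{Q}|\partial^{\omega}\varphi_Q|\le 3^{m}$; the weight scaling follows by collecting the bump slopes ($\asymp N$) and the weights of the multiplication network at accuracy $N^{-\sigma}$ (scaling as $N^{\sigma^{2}/2}$), tensorised over the $m$ directions, which yields the exponent $\max\{\sigma^{2}/2,\;m(1+\sigma/2+m/2)\}$.

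I expect the main obstacle to be purely bookkeeping rather than conceptual: carrying the explicit constant $c_{\sigma,\nu,p}$ of (\ref{bramblehilbert}) through the Leibniz expansions and the sub-cube count so that $\mathcal A_{\nu,\sigma,m,y}$ and $\mathcal B_{\nu,\delta,m,y}$ come out exactly as stated, and checking that the overlap constant still enters as $3^{m}$ and \emph{not} $3^{m/p}$ — which it does, because the overlap bound is applied in $L^{\infty}$ to $\sum_{Q}\partial^{\omega}\varphi_Q$ before the $L^{p}$ norm of the product with the polynomial pieces is taken, so no $p$-dependence leaks into that factor. The one place where $p\ne2$ really matters is the additivity $\sum_{Q}|y|_{W^{\sigma,p}(Q)}^{p}=|y|_{W^{\sigma,p}(\Omega)}^{p}$ used in Step~1 in place of the $\ell^{2}$-orthogonality of the $p=2$ argument, and this is immediate from the definition of the seminorm.
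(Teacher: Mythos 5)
Your proposal is correct and takes essentially the same route as the paper: the paper's entire proof consists of the remark that the theorem follows ``arguing exactly as in \cite{deryckjagtapmishra}'' once the $L^2$ Bramble--Hilbert lemma is replaced by the $W^{\sigma,p}$ version and the $\ell^2$-additivity over sub-cubes is replaced by the (immediate) $\ell^p$-additivity of the seminorms, which is precisely the substitution you identify and carry out. Your expanded accounting of the partition of unity, the multiplication networks, and the constants $\mathcal{A}_{\nu,\sigma,m,y}$, $\mathcal{B}_{\nu,\delta,m,y}$ is a faithful elaboration of that argument.
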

    \begin{remark}
        One can also obtain the bound for $\nu>2$; however, it will be different, as $\|{\rm tanh}\|_{C^\nu}>1$ if $\nu>2$. See \cite{derycklanthalermishra}. 
    \end{remark}

\section{Numerical experiments}\label{numerical}
    We demonstrate how theorems, statements, and corollaries work. Corollary \ref{corpar} (and similar that one can obtain for other types) provides us with two estimates: exact (\ref{parexact}) and asymptotic (\ref{parasymp}). The exact estimate is an explicit expression where each component ($\beta^{*}(w_\theta,w)$, $\gamma(w_\theta,w)$, $\Lambda(w_\theta,w)$ etc.) is known. The asymptotic estimate does not include these components. It simply demonstrates asymptotic dependence between training and total error. Of course, the exact estimate is more desirable; however, it usually overestimates total error and requires additional assumptions on the existence of sufficiently smooth solutions, not to mention that it involves explicit estimates of the solution norms. Thus, the exact estimate serves more for qualitative analysis (e.g., to show the error's exponential dependence on $T$), while the asymptotic estimate, being easy to calculate, allows tracking total error convergence. It also plays an essential role in qualitative analysis since it demonstrates how different training errors influence total error. 

    In this paper, we don't pay much attention to the global well-posedness of the equations. We assume instead that a solution exists and is sufficiently regular in all cases where it's needed. 

    We use the midpoint rule as a method of the approximation of norms (Lemma \ref{midpointrule}).  Also, in some cases, we deal with $L^p$ for different $p$. 

    Also, we obtain estimates with constants that depend on $C^2$-norms of residuals. One can also use instead constants that depend on the $C^{\nu}$-norm of the neural network, as in \cite{deryckjagtapmishra}. However, such an approach significantly overestimates total error and is computationally complex (it requires computing $\binom{\nu+m}{\nu}$ derivatives). It is still advantageous as it allows applying lemma C.1 from \cite{deryckjagtapmishra} to demonstrate that an increase in neural network width is much more preferable than an increase in depth. 

    As for the Theorem \ref{thresid}, we use it only once in subsection \ref{heatss}. The main aim is to demonstrate how the residuals depend on the smoothness order of the equation solution.

    Equations of subsections \ref{heatss} and \ref{kdvss} are both of type (\ref{partype}). First is a linear 1D-heat equation in $L^p$ to show that the method works for time-dependent operators. Second is a (nonlinear) Korteweg–De Vries's equation in $L^p$. 

    Subsection \ref{maxwellss} deals with the 2D-Maxwell equation in $L^2$, which exemplifies generalized parabolic-type equation (\ref{genpartype}). We don't provide in the text the Camassa-Holm equation of higher order (see, for instance, \cite{constantinkolev}); however, it also can be considered as an example of the generalized parabolic-type equation. 

    Subsections \ref{boussinesqss} and \ref{rayleighss} are devoted to hyperbolic type (\ref{hyptype}) equations. The first is the so-called "good" Boussinesq fourth-order equation with two $(2,\epsilon)$-powered operators and operator $F$, but not with $A$. The second equation is the Rayleigh wave equation with one $(2,\epsilon)$-powered operator, and it involves operator $A$ but not $F$. 

    Finally, subsection \ref{ellipticss} covers a 2D-Poisson equation in $L^p$ with a piecewise continuous forcing function. It demonstrates how $(p,\psi)$-coercivity extends to the closure of the operator (properties of coercive operator similar to Statement \ref{closure1}, see Remark \ref{remclos}). Surprisingly, in this case, we can obtain not only an estimate in terms of residuals but also an estimate in terms of training errors. This capability arises because residuals are piecewise smooth functions, and we can consider them almost everywhere. Of course, such a technique applies to any of the types (\ref{partype})--(\ref{elltype}).

    \subsection{1D-Heat equation with non-uniform time-dependent thermal diffusivity}\label{heatss}
        We start with the heat equation with non-uniform time-dependent thermal diffusivity. Let $\Omega=[a;b]$, $X=L^p(\Omega)$, $p\geq2$.  

        \begin{equation}\label{heateq}
            \begin{aligned}
                &\partial_t u = \partial_x\left[\phi_1\partial_x u\right]+\phi_2,~\phi_1\geq 0\\
                &u(0,x)=u_0(x)\\
                &u(t,a)=0,~u(t,b)=0
            \end{aligned}
        \end{equation}

        Since the equation is linear, for every $T>0$, its solution satisfies $u\in C(\overline I;C^{\sigma}(\Omega))$, for a sufficiently regular $u_0$ and some admissible $\phi_1=\phi_1(t,x)$, $\phi_2=\phi_2(t,x)$. 

        Firstly, using Theorem \ref{thresid}, one can formulate the existence of a tanh neural network that approximates the solution and the bound on residuals, which depend on the regularity of the solution. Namely, 
        \begin{statement}
            \label{heatresids}
            Let $u\in W^{\sigma,p}(I\times\Omega)$ be a solution of problem (\ref{heateq}), $\sigma\geq 3$, $\delta>0$, $a,b,T\in\mathbb{Z}$. Then for every $N\in\mathbb{N}$ with $N>5$, there exists a tanh neural network $u_{\theta,N}$ with two hidden layers, one of width at most $3\left\lceil\frac{\sigma}{2}\right\rceil\binom{\sigma+1}{2}+(b-a+T)(N-1)$ and another of width at most $60N^{3}(b-a)T$, such that 
            \begin{equation*}   
                \begin{aligned}
                    &\|\mathcal{R}_{eq}\|_{L^p(I\times\Omega)}\leq \mathcal{A}_1 {\ln^2}(\mathcal{B}_1N)N^{-\sigma+2}\\
                    &\|\mathcal{R}_{in}\|_{L^p(\Omega)}\leq \mathcal{A}_2\ln \left(\mathcal{B}_2N\right)N^{-\sigma+1}\\
                    &\|\mathcal{R}_{bn}\|_{L^p(I;\mathbb{R}^2)}\leq \mathcal{A}_3\ln \left(\mathcal{B}_2N\right)N^{-\sigma+1}
                \end{aligned}
            \end{equation*}
            for some constants $\mathcal{A}_1$--$\mathcal{A}_3$, $\mathcal{B}_1$, $\mathcal{B}_2$ that don't depend on $N$.
        \end{statement}
        \begin{proof}
            For $\mathcal{R}_{eq}$, we have
            \begin{equation*}
                \|\mathcal{R}_{eq}\|_{L^p(I\times\Omega)}\leq \|\partial_t \hat{u}\|_{L^p(I\times\Omega)}+\|\partial_x\phi_1\partial_x \hat{u}\|_{L^p(I\times\Omega)}+\|\phi_1\partial^2_{x}\hat{u}\|_{L^p(I\times\Omega)}\lesssim\|\hat{u}\|_{W^{2,p}(I\times\Omega)}
            \end{equation*}
            Applying Lemma \ref{traceinequality} we have
            \begin{equation*}
                \|\mathcal{R}_{in}\|_{L^p(\Omega)}\leq \left\|\hat{u}\Big|_{t=0}\right\|_{L^p(I\times\Omega)}\lesssim\|\hat{u}\|_{W^{1,p}(I\times\Omega)}
            \end{equation*}
            Similarly, one can obtain the bound on $\mathcal{R}_{bn}$. It remains to apply Theorem \ref{thresid}.
        \end{proof}

        As for the estimates, one can formulate the following result.
        \begin{statement}
            Let $u$ be a solution to problem (\ref{heateq}) and $u_\theta$ be its neural network approximation. Then, one can estimate the total error in terms of residuals:
            \begin{equation*}
                \mathcal{E}\leq \mathcal{C}^{\frac{q}{p}}\left(\frac{p\left(e^{\frac{q(p-1)T}{p}}-1\right)}{q(p-1)}\right),
            \end{equation*}
            where
            \begin{equation*}
                \mathcal{C}=\|\mathcal{R}_{eq}\|^p_{L^p(I\times \Omega)}+\|\mathcal{R}_{in}\|_{L^p(\Omega)}^p+2p(2T)^{\frac{1}{p}}\|\phi_1\|_{C(\overline{I}\times\Gamma)}\max\{\|\partial _xu\|_{C(\overline{I}\times\Gamma)},\|\partial _xu_{\theta}\|_{C(\overline{I}\times\Gamma)}\}\|\mathcal{R}_{bn}\|^{p-1}_{L^p(I;\mathbb{R}^2)}
            \end{equation*}
            Furthermore, if residuals are at least of $C^2$-class, then one can estimate the total error in terms of training error:
            \begin{equation*}
                \mathcal{E}\leq {\tilde{\mathcal{C}}}^{\frac{q}{p}}\left(\frac{p\left(e^{\frac{q(p-1)T}{p}}\right)}{q(p-1)}-1\right),
            \end{equation*}
            where
            \begin{equation*}
                \begin{aligned}
                    \tilde{\mathcal{C}}&= \mathcal{E}_{T,eq}+\frac{p^2(b-a)T}{24}\left((b-a)^2+T^2\right)\|\mathcal{R}_{eq}\|^p_{C^2(I\times\Omega)}M_{eq}^{-1}+\\
                    &+\mathcal{E}_{T,in}+\frac{p^2(b-a)^3}{24}\|\mathcal{R}_{in}\|_{C^2(\Omega)}^pM_{in}^{-2}+\\
                    &+2p(2T)^{\frac{1}{p}}\|\phi_1\|_{C(\overline{I}\times\Gamma)}\max\{\|\partial _xu\|_{C(\overline{I}\times\Gamma)},\|\partial _xu_{\theta}\|_{C(\overline{I}\times\Gamma)}\}\left(\mathcal{E}_{T,bn}+\frac{p^2T^3}{12}\|\mathcal{R}_{bn}\|_{C^2(I;\mathbb{R}^2)}^pM_{bn}^{-2}\right)^{\frac{p-1}{p}}
                \end{aligned}
            \end{equation*}
            That is asymptotically
            \begin{equation*}
                \mathcal{E}=\mathcal{O}\left(\mathcal{E}_{T,eq}+M_{eq}^{-1}+\mathcal{E}_{T,in}+M_{in}^{-2}+\left(\mathcal{E}_{T,bn}+M_{bn}^{-2}\right)^{\frac{p-1}{p}}\right)
            \end{equation*}
        \end{statement}
        \begin{proof}
            For the operator $A(t)=(\phi_1 y')'$, we have
            \begin{equation*}
                \begin{aligned}
                    \langle A(t)(y), y\rangle_p = \int_{a}^{b} (\phi_1 y')'|y|^{p-2}ydx = \phi_1 y'|y|^{p-2}y\Big|_{a}^{b}-(p-1)\int_{a}^{b}\phi_1(y')^2|y|^{p-2}dx\leq \phi_1 y'|y|^{p-2}y\Big|_{a}^{b}
                \end{aligned}
            \end{equation*}
            Hence, $\Lambda\equiv0$ and if we put $Y=\mathbb{R}^2$ with $p$-norm, then
            \begin{equation*}
                |\psi(\chi, y)| = \left|\phi_1 \hat{y}'|\hat{y}|^{p-2}\hat{y}\Big|_{a}^{b}\right|\leq 2^{\frac{p+1}{p}}\|\phi_1\|_{C(\overline{I}\times\Gamma)}\max\{\|y'\|_{C(\Gamma)},\|\chi'\|_{C(\Gamma)}\}|B\hat{y}|^{p-1}_{p},~\hat y=\chi-y
            \end{equation*}
            Hence, with Theorem \ref{th2}, we have the first estimate. For the exact and asymptotic estimates, it remains to apply Corollary \ref{corpar} and Lemmas \ref{midpointrule}, \ref{pc2norm}. 
        \end{proof}

        Now, we turn to the exact problem. Let $a=1$, $b=2$, $T=1$, $\phi_2\equiv0$, $\phi_1=x^2\sin t$ and 
        \begin{equation*}
            u_0(x)=\frac{\sin\left(\frac{\pi}{\ln(2)}\ln x\right)}{\sqrt{x}}
        \end{equation*}
        Then the exact solution is 
        \begin{equation*}
            u(t,x)=\frac{\sin\left(\frac{\pi}{\ln(2)}\ln x\right)}{\sqrt{x}}e^{-\left(\frac{\pi^2}{\ln^2(2)}+\frac{1}{4}\right)(1-\cos t)}
        \end{equation*}
        We take 2 hidden layers of width 80, 20000 epochs, and training samples of size $M_{eq}=2^{18}$ and $M_{in}=M_{bn}=2^{12}$. Also, we make experiments for the cases $q=p\in\{2,3,4,5\}$. 
        \begin{figure}[t]
            \centering
            \includegraphics[scale=0.4]{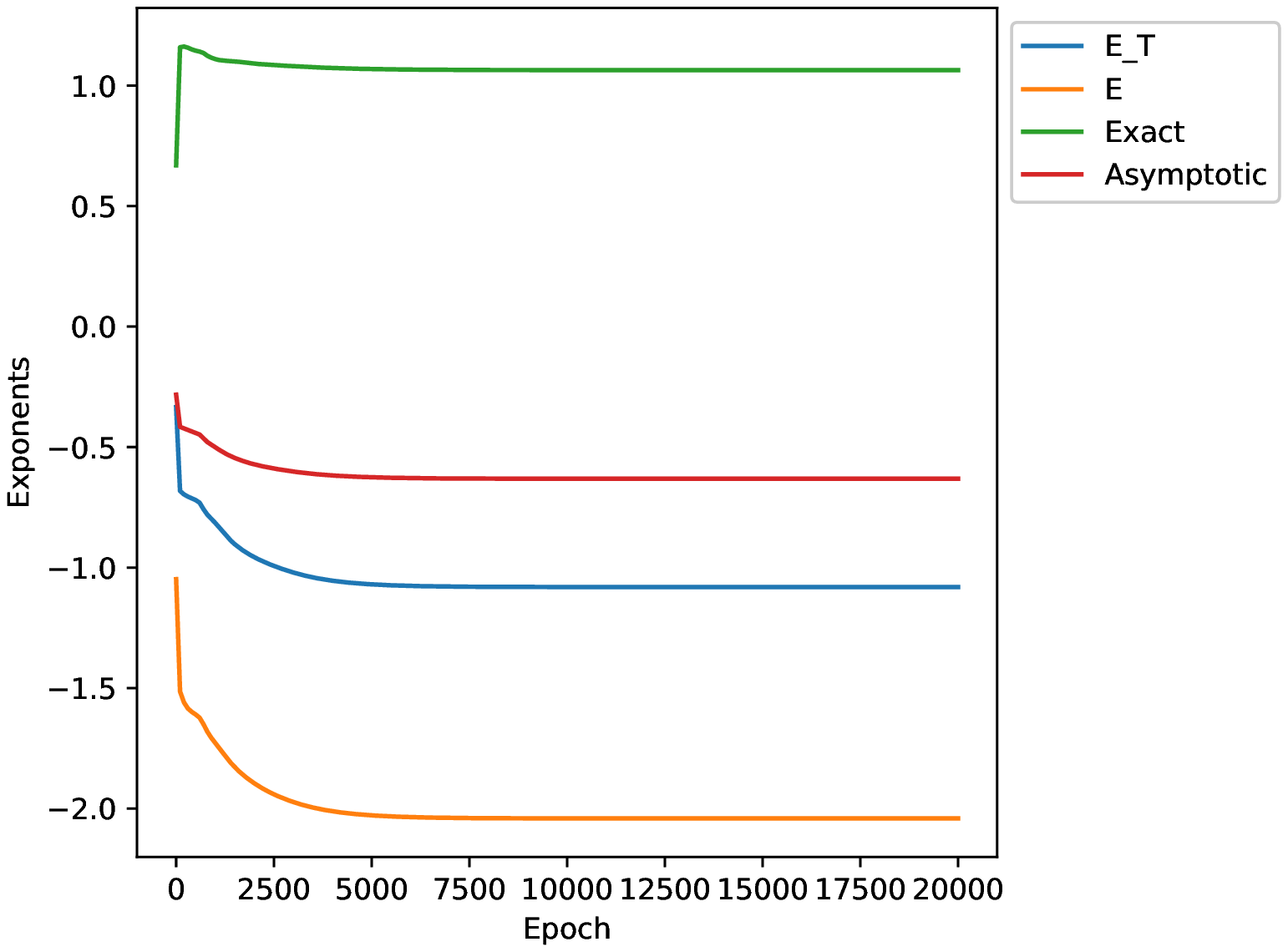}
            \includegraphics[scale=0.4]{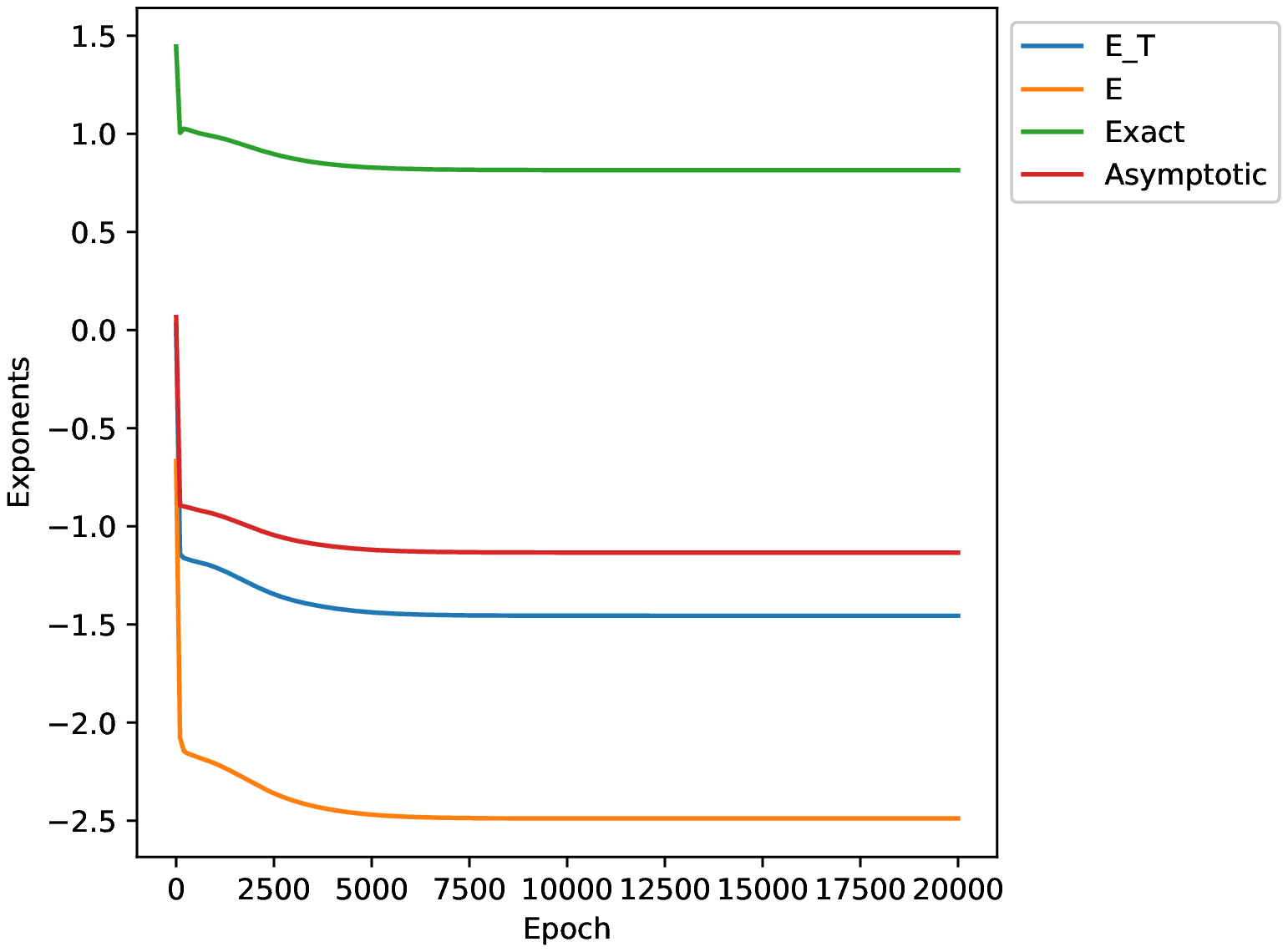}
            \includegraphics[scale=0.4]{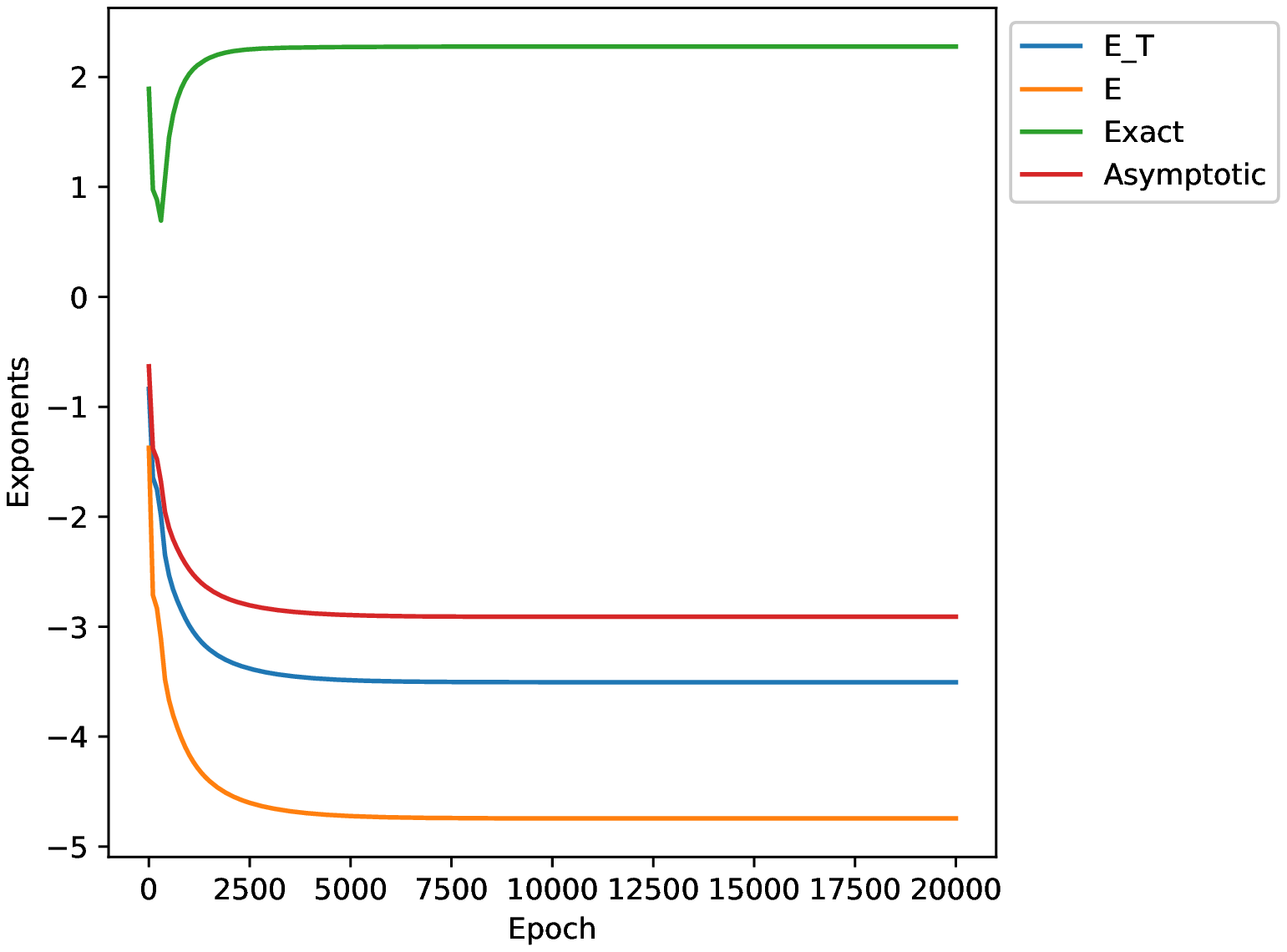}
            \includegraphics[scale=0.4]{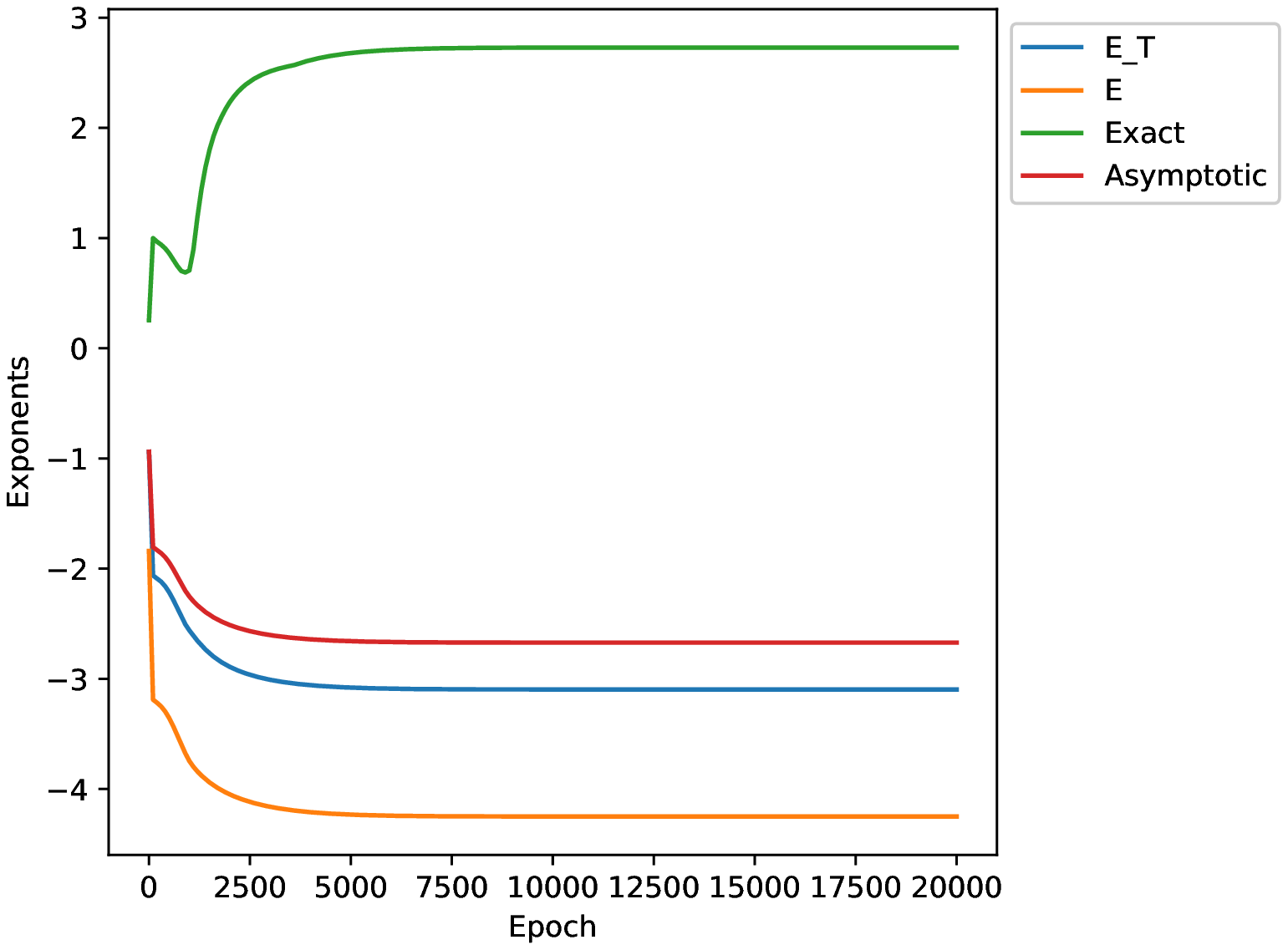}
            \caption{1D-Heat equation: $\lg(\mathcal{E})$, $\lg(\mathcal{E}_T)$, $\lg(\mathcal{E}_{exact})$, $\lg(\mathcal{E}_{asymp})$ for $L^p$, $p\in\{2,3,4,5\}$}\label{heat1}
        \end{figure} 
        Figure \ref{heat1} shows that the exact estimate can significantly overestimate total error. Also, starting from some epoch, the behavior of total error and estimates is similar. To illustrate it further, let us build a correlation plot between $\mathcal{E}_T$ and $\mathcal{E}_{asymp}$ (Figure \ref{heat2}).
        \begin{figure}[t]
            \centering
            \includegraphics[scale=0.4]{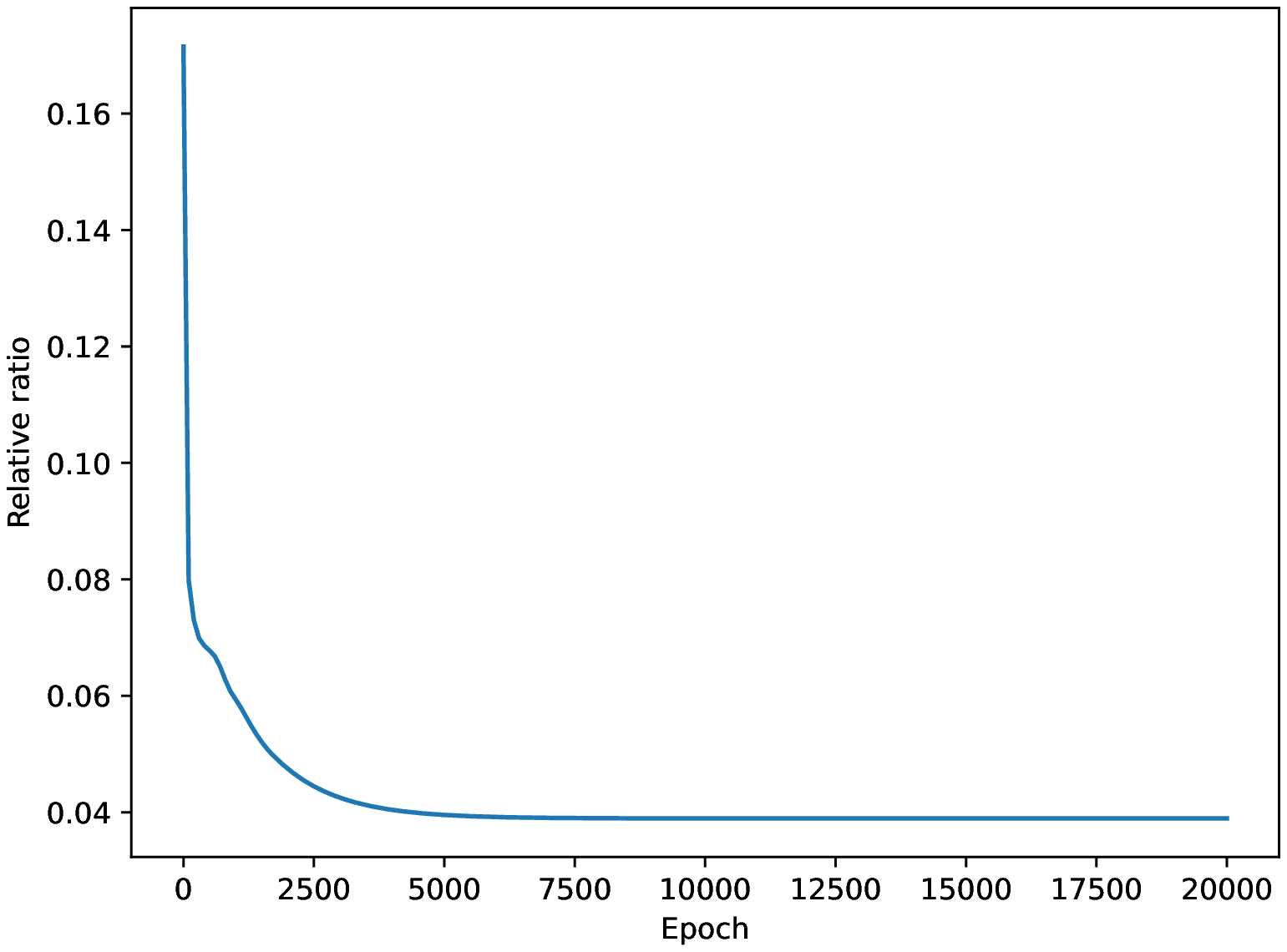}
            \includegraphics[scale=0.4]{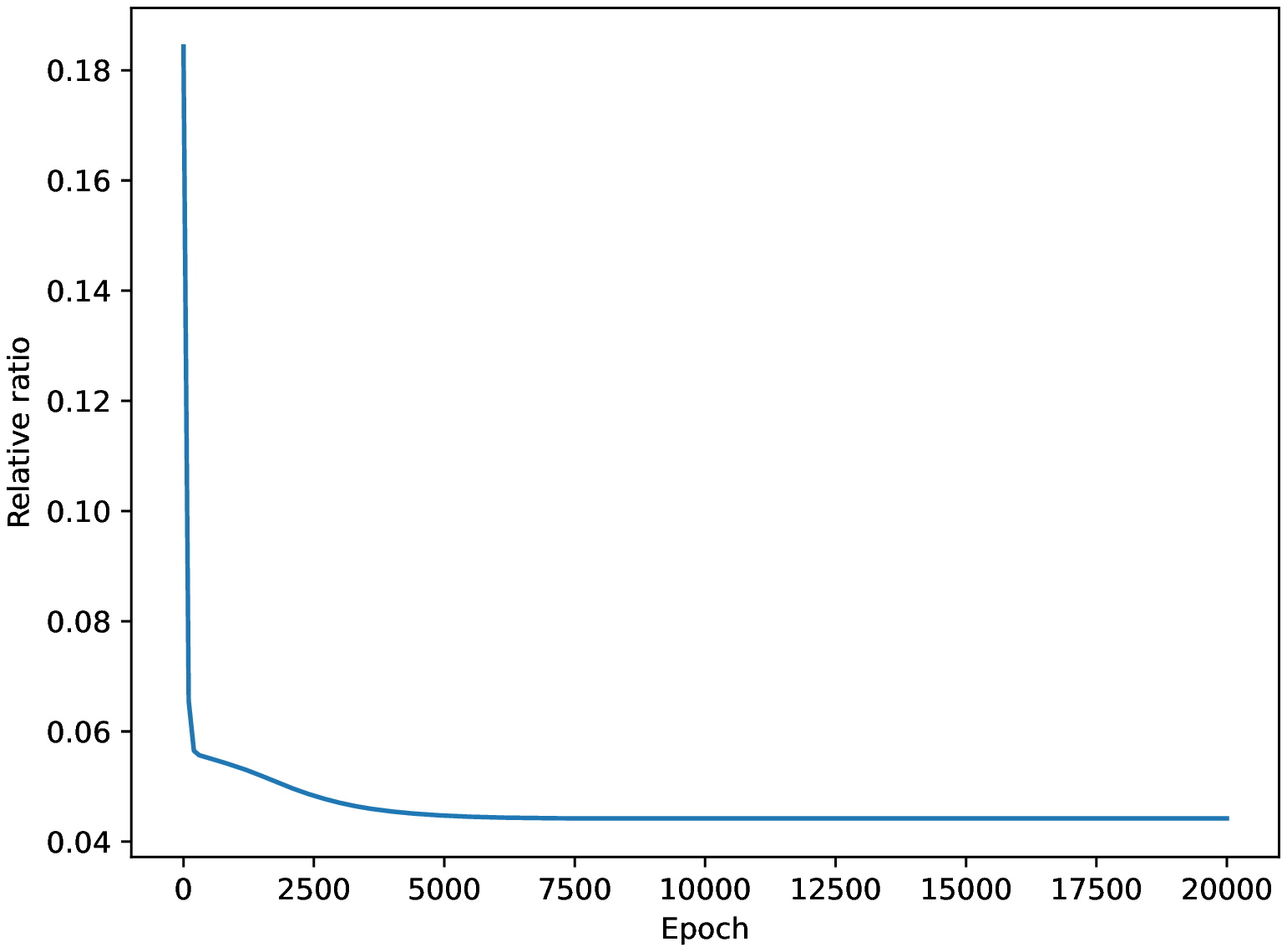}
            \includegraphics[scale=0.4]{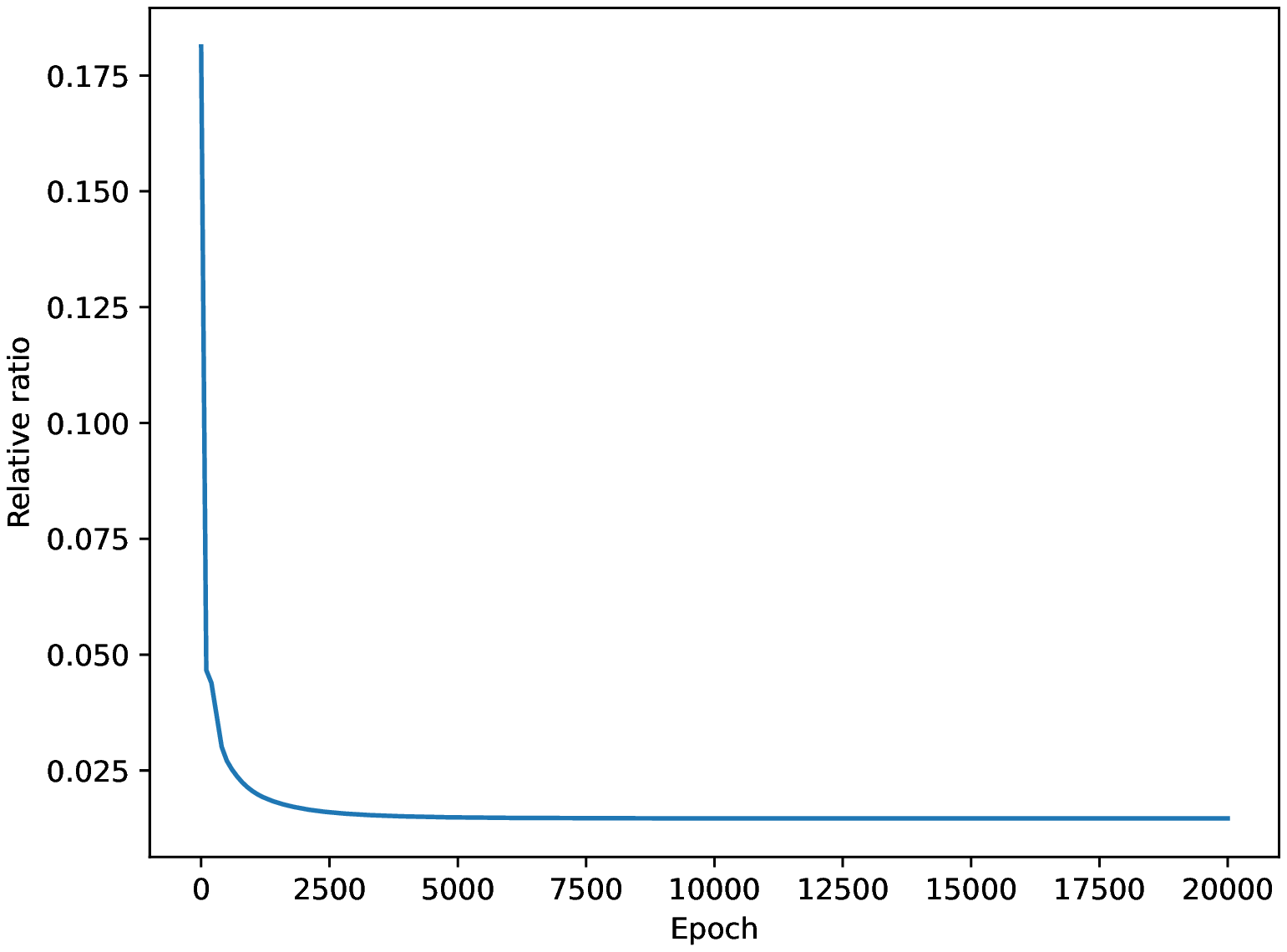}
            \includegraphics[scale=0.4]{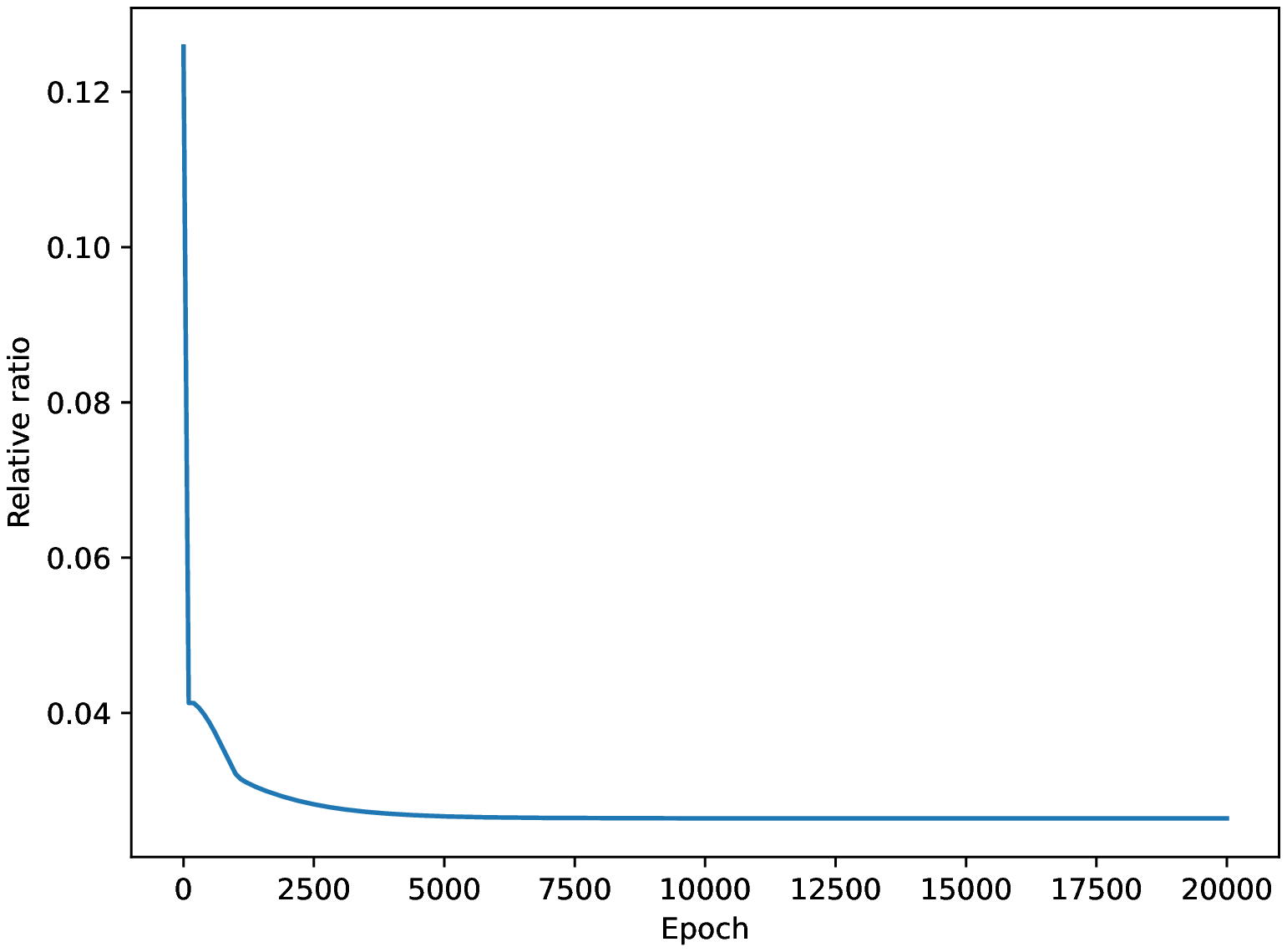}
            \caption{1D-Heat equation: $\frac{\mathcal{E}_T}{\mathcal{E}_{asymp}}$ for $L^p$, $p\in\{2,3,4,5\}$}\label{heat2}
        \end{figure} 

    \subsection{KdV equation}\label{kdvss}
        Another equation we consider is Korteweg–De Vries's equation. Let $\Omega=[a;b]$, $X=L^p(\Omega)$, and $p=2$ or $p>3$.   
        \begin{equation}\label{kdveq}
            \begin{aligned}
                &\partial_t u=6u\partial_xu-\partial^3_{x}u\\
                &u(0,x)=u_0(x)\\
                &u(t,a)=g_1(t),~u(t,b)=g_2(t),~\partial_xu(t,b)=g_3(t)
            \end{aligned}
        \end{equation}

        For a sufficiently regular $u_0$ and admissible $g_1$, $g_2$ and $g_3$, for every $T>0$, its solution satisfies $u\in C^\sigma(I\times[a;b])$. See, for intance, \cite{godlewskiraviart}. 

        Then we have the following result
        \begin{statement}
            Let $u$ be a solution to problem (\ref{kdveq}) and $u_\theta$ be its neural network approximation. Then, one can estimate the total error in terms of residuals:
            \begin{equation*}
                \mathcal{E}\leq \mathcal{C}^{\frac{q}{p}}\left(\frac{p(e^{\frac{q(p-1)T}{p}}-1)}{q(p-1)}\right)e^{qT\max\left\{6\left(1+\frac{1}{p}\right)\|\partial_xu\|_{C(\overline{I}\times\Omega)}-\frac{27(p-1)(p-2)}{4p^3(b-a)^3}, 0\right\}},
            \end{equation*}
            where
            \begin{equation*}
                \begin{aligned}
                    &\mathcal{C}= \|\mathcal{R}_{eq}\|^p_{L^p(I\times\Omega)}+\|\mathcal{R}_{in}\|_{L^p(\Omega)}^p+(2T)^{\frac{1}{p}}\mu p\|\mathcal{R}_{bn}\|^{p-1}_{L^p(I;\mathbb{R}^3)}+\\
                    &+\left(\frac{6\mu (2p+1)}{p+1}+\max\left\{\frac{(p-1)(p-2)}{2},p-1\right\}+\frac{27(p-1)(p-2)}{2p^2(b-a)^2}\right)\|\mathcal{R}_{bn}\|^p_{L^p(I;\mathbb{R}^3)},\\
                    &\mu =\max\{\|u\|_{C(\overline{I}\times\Gamma)},\|u_\theta\|_{C(\overline{I}\times\Gamma)},\|\partial^2_{x}u\|_{C(\overline{I}\times\Gamma)},\|\partial^2_{x}u_{\theta}\|_{C(\overline{I}\times\Gamma)}\}
                \end{aligned}
            \end{equation*}

            Furthermore, if residuals are at least of $C^2$-class, then one can estimate the total error in terms of training error:
            \begin{equation*}
                \mathcal{E}\leq {\tilde{\mathcal{C}}}^{\frac{q}{p}}\left(\frac{p(e^{\frac{q(p-1)T}{p}}-1)}{q(p-1)}\right)e^{qT\max\left\{6\left(1+\frac{1}{p}\right)\|\partial_x u\|_{C(\overline{I}\times\Omega)}-\frac{27(p-1)(p-2)}{4p^3(b-a)^3}, 0\right\}},
            \end{equation*}
            where
            \begin{equation*}
                \begin{aligned}
                    \mathcal{C}&=\mathcal{E}_{T,eq}+p^2\frac{(b-a)T}{24}\left((b-a)^2+T^2\right)\|\mathcal{R}_{eq}\|^p_{C^2(I\times\Omega)}M^{-1}_{eq}+\\
                    &+\mathcal{E}_{T,in}+p^2\frac{(b-a)^3}{24}\|\mathcal{R}_{in}\|^p_{C^2(I\times\Omega)}M^{-2}_{in}+\\
                    &+(2T)^{\frac{1}{p}}\mu p\left(\mathcal{E}_{T,bn}+p^2\frac{3^p(b-a)^3}{24}\|\mathcal{R}_{bn}\|^p_{C^2(I;\mathbb{R}^3)}M^{-2}_{bn}\right)^{1-\frac{1}{p}}+\\
                    &+\left(\frac{6\mu (2p+1)}{p+1}+\max\left\{\frac{(p-1)(p-2)}{2},p-1\right\}+\right.\\
                    &\left.+\frac{27(p-1)(p-2)}{2p^2(b-a)^2}\right)\left(\mathcal{E}_{T,bn}+p^2\frac{3^p(b-a)^3}{24}\|\mathcal{R}_{bn}\|^p_{C^2(I;\mathbb{R}^3)}M^{-2}_{bn}\right)\\
                    &\mu =\max\{\|u\|_{C(\overline{I}\times\Gamma)},\|u_\theta\|_{C(\overline{I}\times\Gamma)},\|\partial^2_{x}u\|_{C(\overline{I}\times\Gamma)},\|\partial^2_{x}u_{\theta}\|_{C(\overline{I}\times\Gamma)}\}
                \end{aligned}
            \end{equation*}
            That is asymptotically
            \begin{equation*}
                \mathcal{E}=\mathcal{O}\left(\mathcal{E}_{T,eq}+M_{eq}^{-1}+\mathcal{E}_{T,in}+M_{in}^{-2}+\left(\mathcal{E}_{T,bn}+M_{bn}^{-2}\right)^{\frac{p-1}{p}}\right)
            \end{equation*}
        \end{statement}
        \begin{proof}
            With Examples \ref{thirdderiv} and \ref{yyderiv}, we have 
            \begin{equation*}
                \begin{aligned}
                &\psi(y_\theta,y)=\left(\frac{6}{p+1}|\hat{y}|^{p+1}+\frac{6}{p}|\hat{y}|^py_2-\hat{y}''|\hat{y}|^{p-2}\hat{y}\right)\Big|_{a}^{b}+\frac{(p-1)}{2}(\hat{y}'(b))^{2}|\hat{y}(b)|^{p-2}+\\
                &+\frac{27(p-1)(p-2)}{8p^3(b-a)^2}\left(|\hat{y}(a)|^{\frac{p}{3}}+|\hat{y}(b)|^{\frac{p}{3}}\right)^3\leq 2^{\frac{1}{p}}\mu |\mathcal{R}_{bn}|_p^{p-1}+\\
                &+\left(\frac{6\mu (2p+1)}{p(p+1)}+\max\left\{\frac{(p-1)(p-2)}{2p},\frac{p-1}{p}\right\}+\frac{27(p-1)(p-2)}{2p^3(b-a)^2}\right)|\mathcal{R}_{bn}|_p^{p}
                \end{aligned}
            \end{equation*}
            It remains to apply Theorem \ref{th2}, Corollary \ref{corpar} and Lemmas \ref{midpointrule}, \ref{pc2norm}. 
        \end{proof}

        Now, we turn to the exact problem. Let $a=-1$, $b=1$, $T=1$, and 
        \begin{equation*}   
            \begin{aligned}
                &u_0(x)=\frac{1}{2}\left({\rm tanh}^2\left(\frac{1}{2}x\right)-1\right)\\
                &g_1(t)=\frac{1}{2}\left({\rm tanh}^2\left(\frac{1}{2}(1+t)\right)-1\right)\\
                &g_2(t)=\frac{1}{2}\left({\rm tanh}^2\left(\frac{1}{2}(1-t)\right)-1\right)\\
                &g_3(t)=\frac{1}{2}{\rm tanh}\left(\frac{1}{2}(1-t)\right)\left(1-{\rm tanh}^2\left(\frac{1}{2}(1-t)\right)\right)
            \end{aligned}
        \end{equation*}

        Then the exact solution is 
        \begin{equation*}
            u(t,x)=\frac{1}{2}\left({\rm tanh}^2\left(\frac{1}{2}(x-t)\right)-1\right)
        \end{equation*}
        We take 2 hidden layers of width 80, 20000 epochs, and training samples of size $M_{eq}=2^{18}$ and $M_{in}=M_{bn}=2^{12}$. Also, we make experiments for the cases $q=p\in\{2,3.5,4,5\}$. Again, we see the correlation between asymptotic estimate and total error, and that exact estimate overestimates total error (Figures \ref{kdv1},\ref{kdv2}).
        \begin{figure}[t]
            \centering
            \includegraphics[scale=0.4]{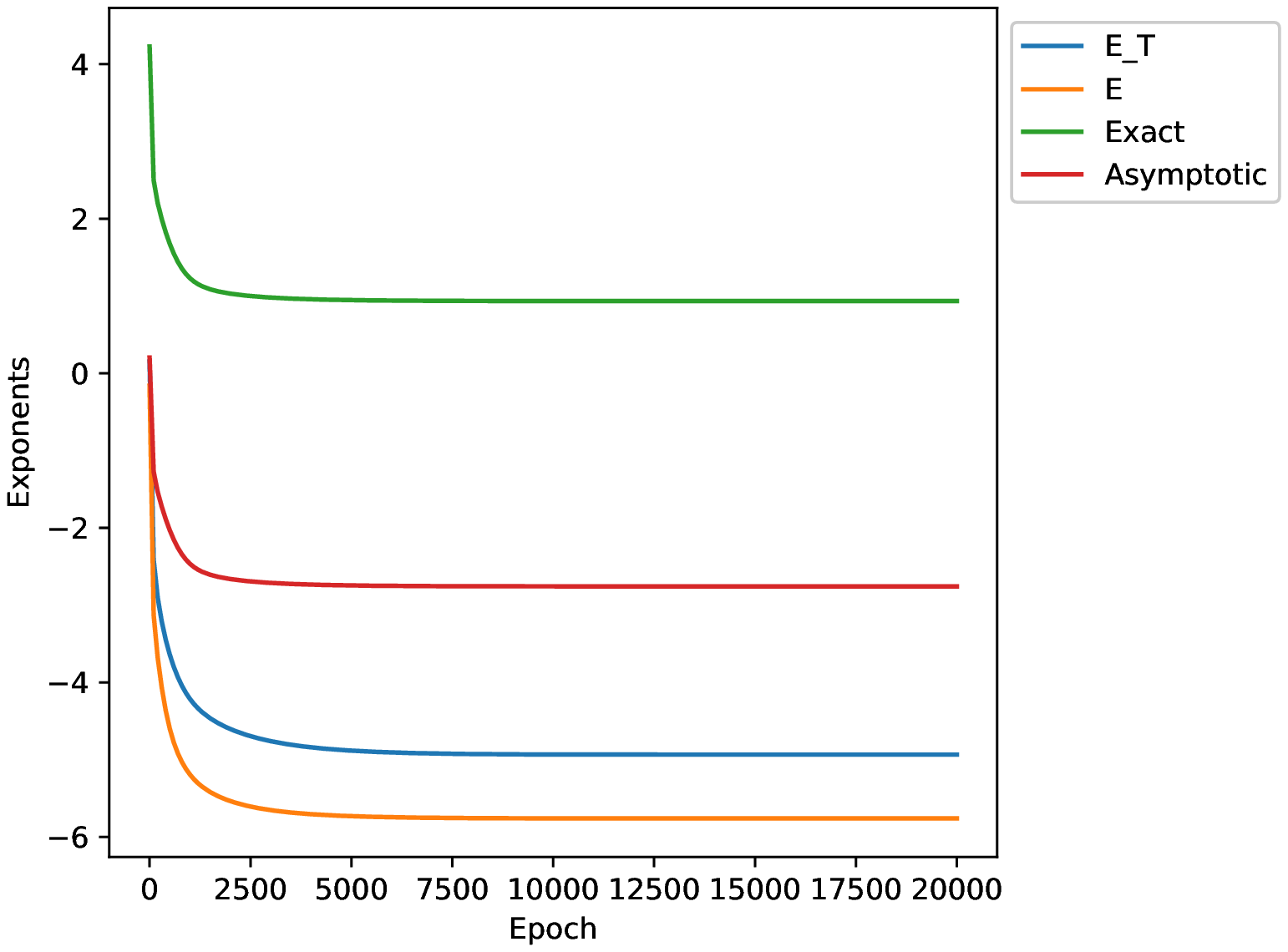}
            \includegraphics[scale=0.4]{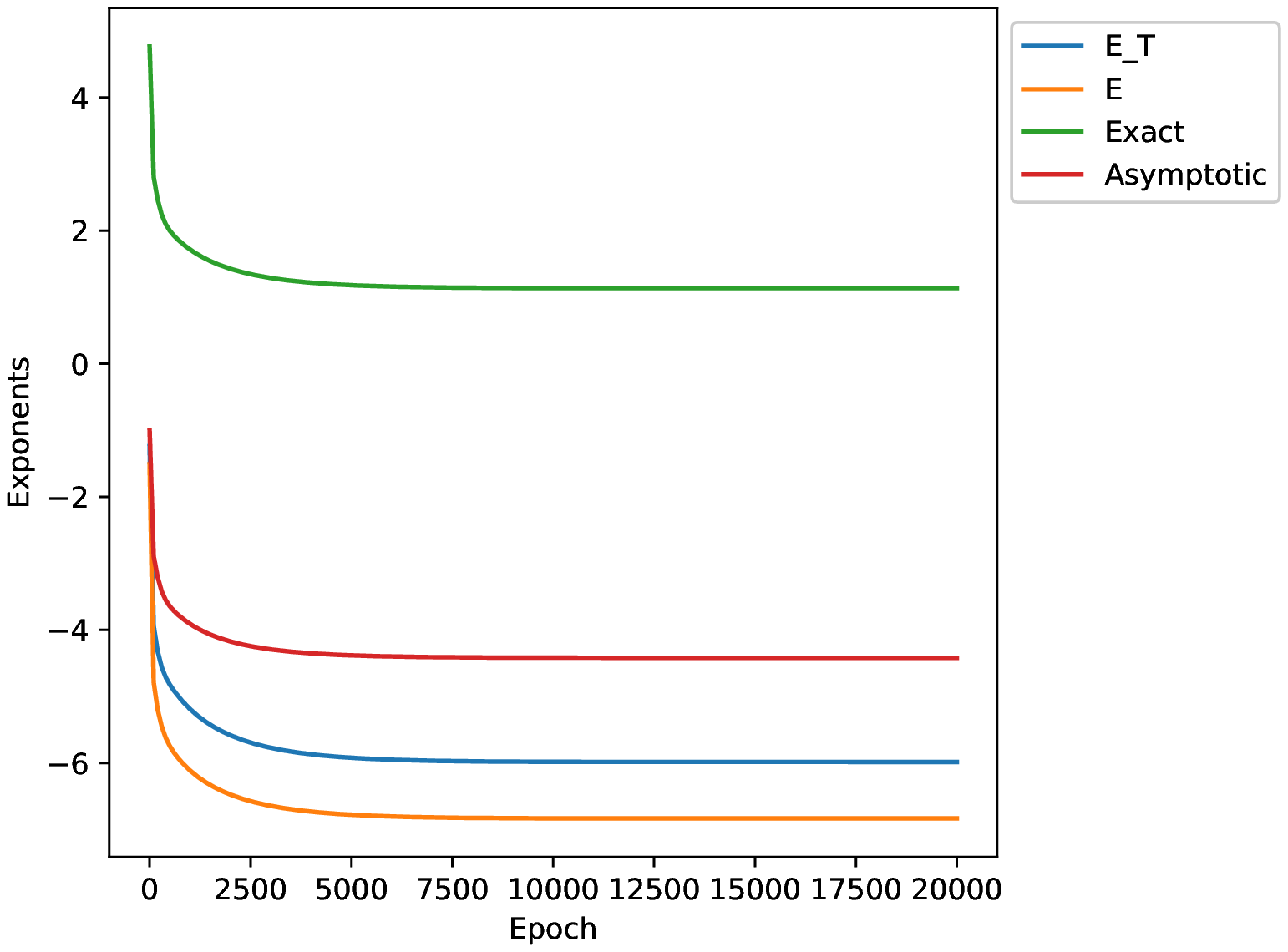}
            \includegraphics[scale=0.4]{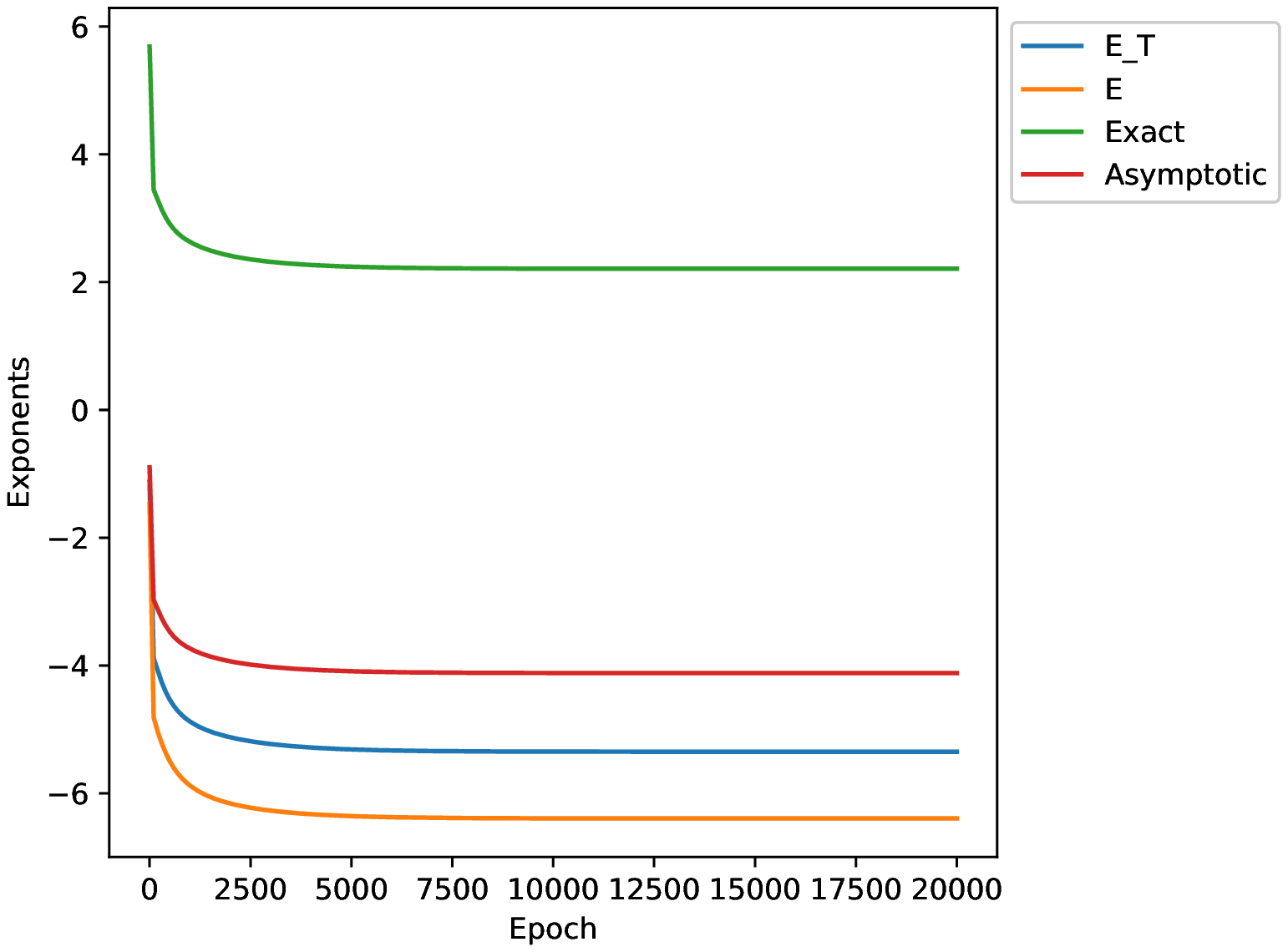}
            \includegraphics[scale=0.4]{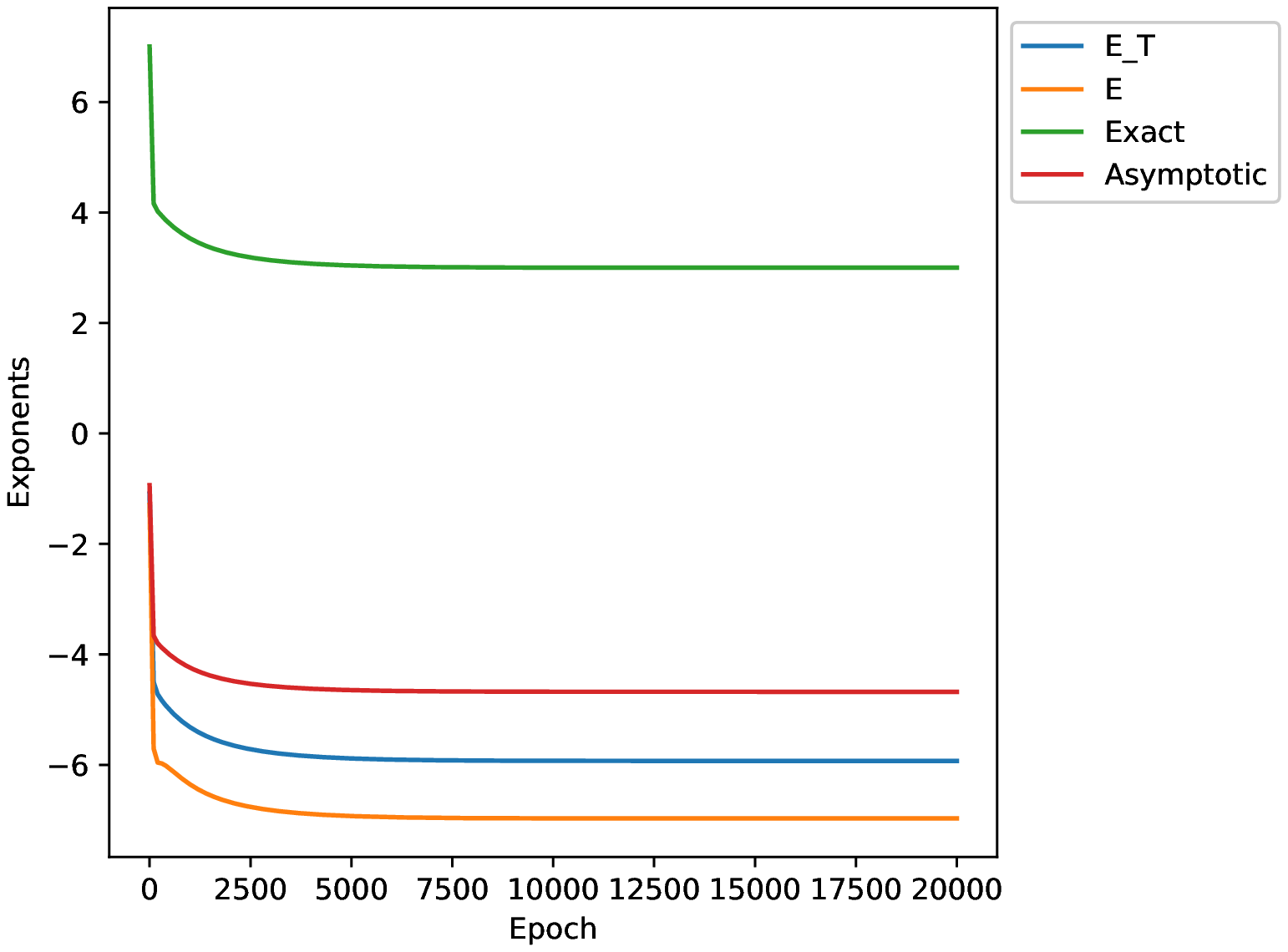}
            \caption{KDV equation: $\lg(\mathcal{E})$, $\lg(\mathcal{E}_T)$, $\lg(\mathcal{E}_{exact})$, $\lg(\mathcal{E}_{asymp})$ for $L^p$, $p\in\{2,3.5,4,5\}$}\label{kdv1}
        \end{figure} 
        \begin{figure}[t]
            \centering
            \includegraphics[scale=0.4]{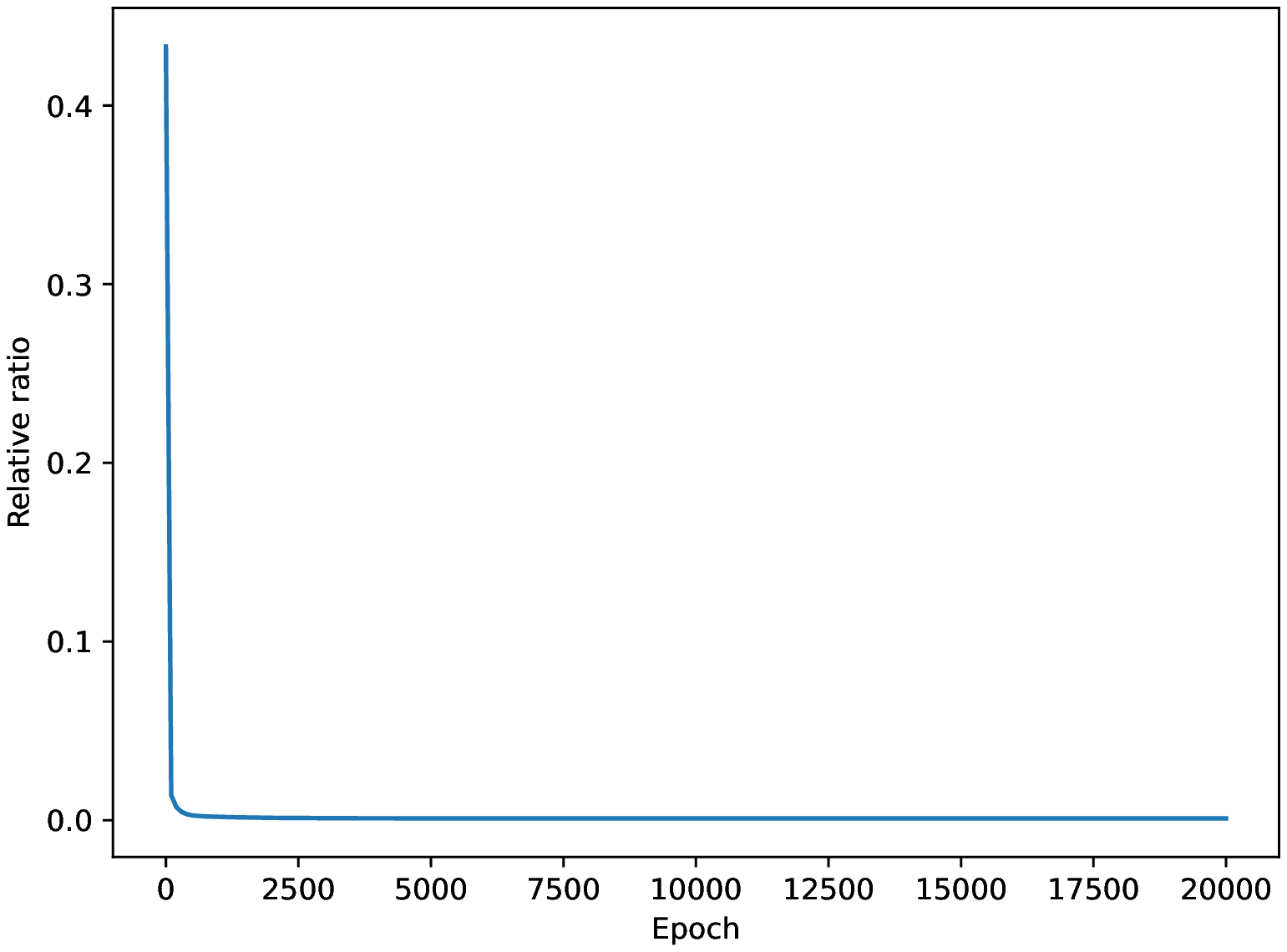}
            \includegraphics[scale=0.4]{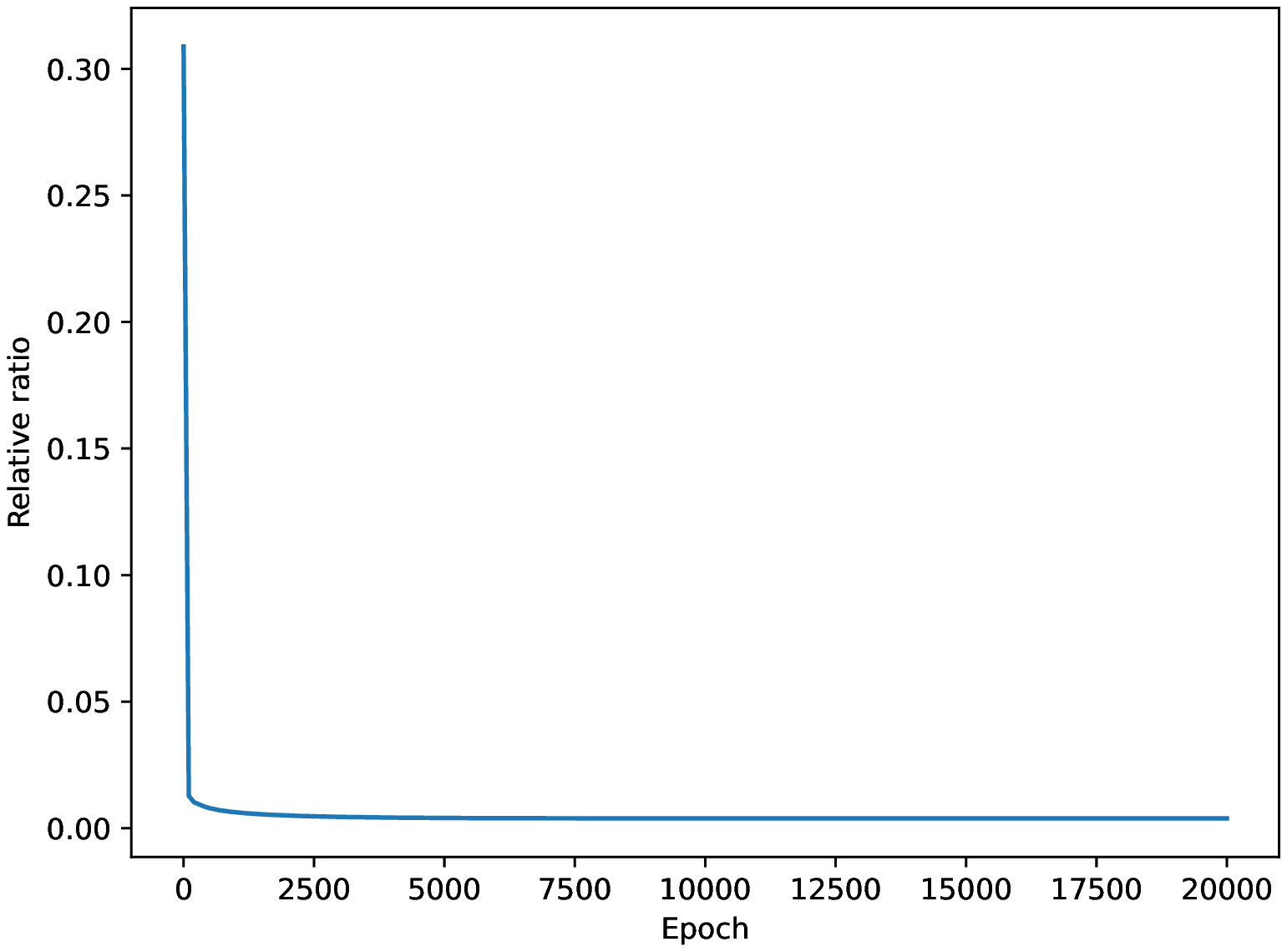}
            \includegraphics[scale=0.4]{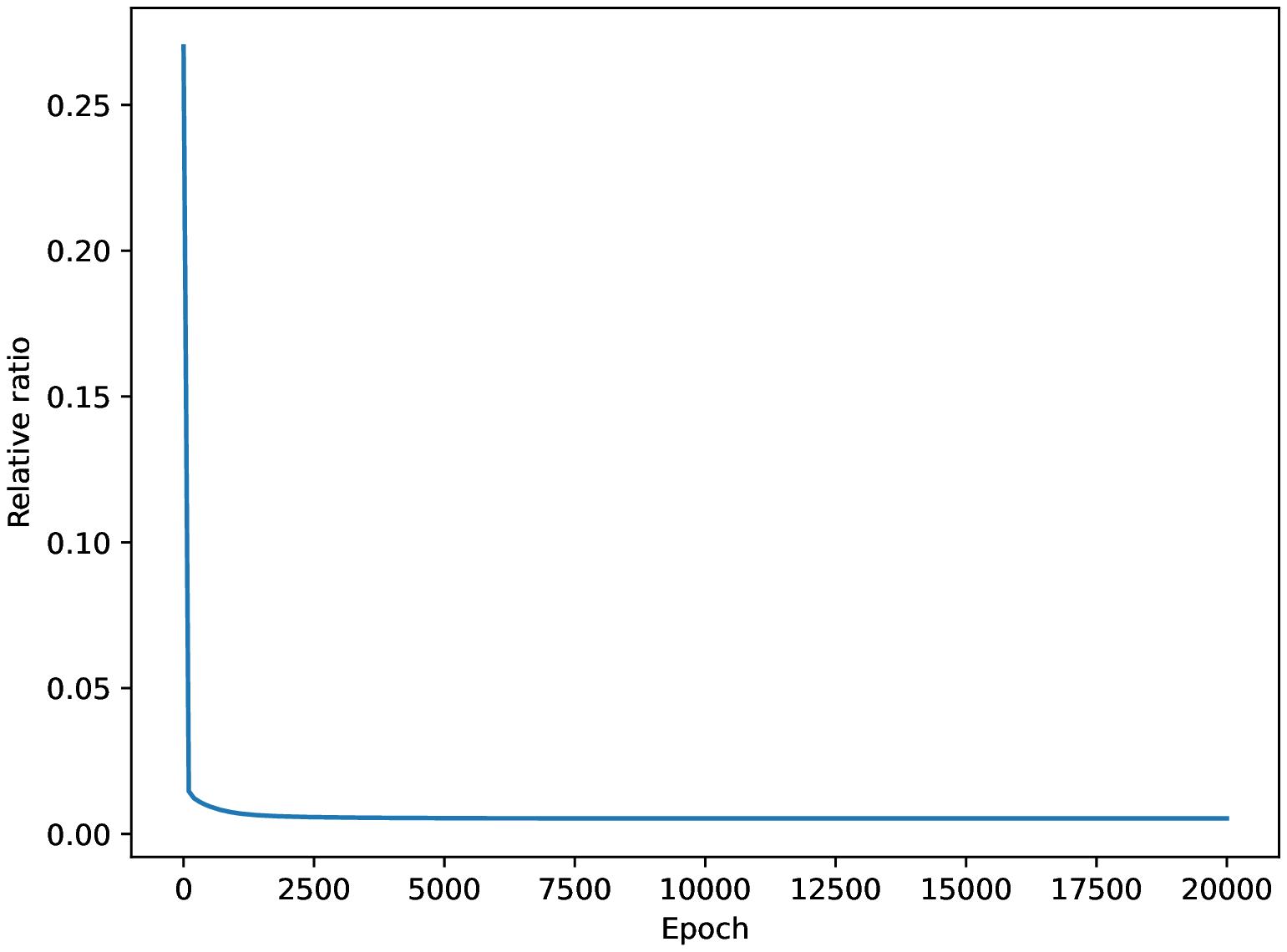}
            \includegraphics[scale=0.4]{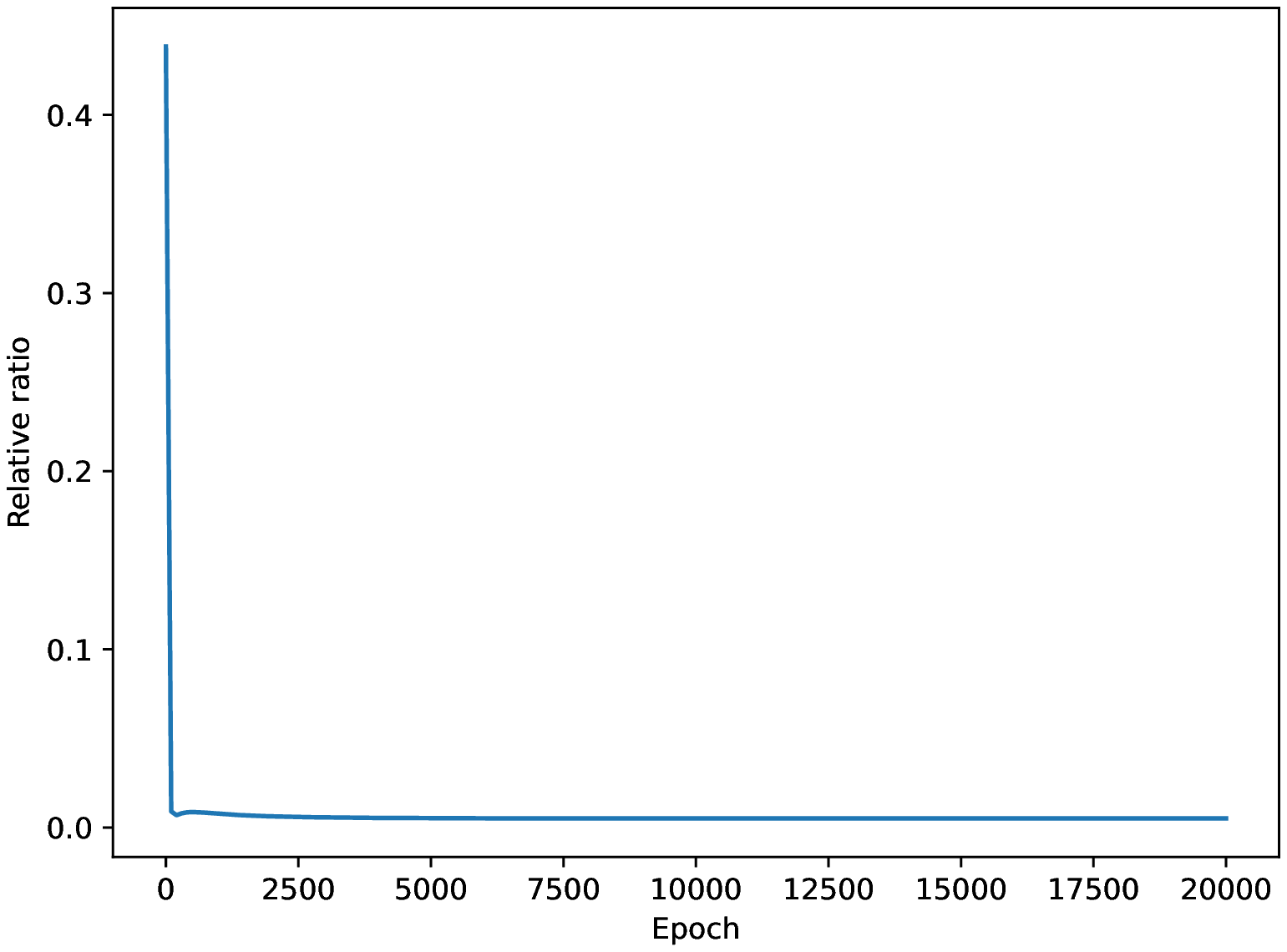}
            \caption{KDV equation: $\frac{\mathcal{E}_T}{\mathcal{E}_{asymp}}$ for $L^p$, $p\in\{2,3.5,4,5\}$}\label{kdv2}
        \end{figure} 

    \subsection{Maxwell equation}\label{maxwellss}
        Now we consider the 2D-Maxwell equation. Let $\Omega=[a_1;b_1]\times[a_2;b_2]$ and $X=L^2(\Omega;\mathbb{R}^3)$.

        \begin{equation}\label{maxwelleq}
            \begin{aligned}
                &\varepsilon_1 \partial_t u_{1} = \partial_{x_1}u_{3}-\partial_{x_2}u_{2}\\
                &\varepsilon_2 \partial_tu_{2} = -\partial_{x_2}u_{1}\\
                &\varepsilon_2 \partial_tu_{3} = \partial_{x_1}u_{1}\\
                &u_1(0,x_1,x_2)=u_{1,0}(x_1,x_2),~u_2(0,x_1,x_2)=u_{2,0}(x_1,x_2),~u_3(0,x_1,x_2)=u_{3,0}(x_1,x_2)\\
                &u_1(t,a_1,x_2)=u_1(t,b_1,x_2)=u_1(t,x_1,a_2)=u_1(t,x_1,b_2)\equiv0
            \end{aligned}
        \end{equation}

        Since the equation is linear, for every $T>0$, its solution satisfies $u\in C(\overline I;C^{\sigma}(\Omega))$ for a sufficiently regular $u_0$. 

        If $\Omega_1=[a_1;b_1]$, $\Omega_2=[a_2;b_2]$, $Y=L^2(\Omega_2)\times L^2(\Omega_2)\times L^2(\Omega_1)\times L^2(\Omega_1)$, $\Gamma_1=\partial\Omega_1$, $\Gamma_2=\partial\Omega_2$, one can formulate the following result.
        \begin{statement}
            Let $u$ be a solution to problem (\ref{maxwelleq}) and $u_\theta$ be its neural network approximation. Then, one can estimate the total error in terms of residuals:
            \begin{equation*}
                \mathcal{E}\leq \mathcal{C}\min\{\varepsilon_1,\varepsilon_2\}\left(e^{\frac{T}{\min\{\varepsilon_1,\varepsilon_2\}}}-1\right),
            \end{equation*}
            where
            \begin{equation*}
                \begin{aligned}
                    &\mathcal{C}= \|\mathcal{R}_{eq}\|_{L^2(I\times\Omega)}^2+\max\{\varepsilon_1,\varepsilon_2\}\|\mathcal{R}_{in}\|_{L^2(\Omega)}^2+\\
                    &+8\sqrt{T}\mu\|\mathcal{R}_{bn}\|_{L^2(I; Y)}\\
                    &\mu=\max\left\{\sqrt{b_1-a_1}\|u_2\|_{C(\overline{I}\times\Omega_1\times\Gamma_2)},\sqrt{b_2-a_2}\|u_3\|_{C(\overline{I}\times\Omega_2\times\Gamma_1)},\right.\\
                    &\left.\sqrt{b_1-a_1}\|u_{2,\theta}\|_{C(\overline{I}\times\Omega_1\times\Gamma_2)},\sqrt{b_2-a_2}\|u_{3,\theta}\|_{C(\overline{I}\times\Omega_2\times\Gamma_1)}\right\}
                \end{aligned}
            \end{equation*}
            Furthermore, if residuals are at least of $C^2$-class, then one can estimate the total error in terms of training error:
            \begin{equation*}
                \mathcal{E}\leq \tilde{\mathcal{C}}\min\{\varepsilon_1,\varepsilon_2\}\left(e^{\frac{T}{\min\{\varepsilon_1,\varepsilon_2\}}}-1\right),
            \end{equation*}
            where
            \begin{equation*}
                \begin{aligned}
                    &\tilde{\mathcal{C}}= \mathcal{E}_{T,eq}+\frac{T(b_2-a_2)(b_1-a_1)}{2}\left(T^2+(b_1-a_1)^2+(b_2-a_2)^2\right)\|\mathcal{R}_{eq}\|^2_{C^2(I\times\Omega)}M_{eq}^{-\frac{2}{3}}+\\
                    &+\max\{\varepsilon_1,\varepsilon_2\}\mathcal{E}_{in}+ \frac{(b_2-a_2)(b_1-a_1)}{2}((b_1-a_1)^2+(b_2-a_2)^2)\|\mathcal{R}_{in}\|_{C^2(\Omega)}^2M_{in}^{-1}+\\
                    &+8\sqrt{T}\mu\sqrt{\mathcal{E}_{T,bn}+\left[\frac{T(b_2-a_2)}{3}(T^2+(b_2-a_2)^2)+\frac{T(b_1-a_1)}{3}(T^2+(b_1-a_1)^2)\right]\|\mathcal{R}_{bn}\|_{C^2(I;Y)}^2M_{bn}^{-1}},\\
                    &\mu=\max\left\{\sqrt{b_1-a_1}\|u_2\|_{C(\overline{I}\times\Omega_1\times\Gamma_2)},\sqrt{b_2-a_2}\|u_3\|_{C(\overline{I}\times\Omega_2\times\Gamma_1)},\right.\\
                    &\left.\sqrt{b_1-a_1}\|u_{2,\theta}\|_{C(\overline{I}\times\Omega_1\times\Gamma_2)},\sqrt{b_2-a_2}\|u_{3,\theta}\|_{C(\overline{I}\times\Omega_2\times\Gamma_1)}\right\}
                \end{aligned}
            \end{equation*}
            That is asymptotically
            \begin{equation*}
                \mathcal{E}=\mathcal{O}\left(\mathcal{E}_{T,eq}+M_{eq}^{-\frac{2}{3}}+\mathcal{E}_{T,in}+M_{in}^{-1}+\sqrt{\mathcal{E}_{T,bn}+M_{bn}^{-1}}\right)
            \end{equation*}
        \end{statement}
        \begin{proof}
            For operator $A$, we refer to Example \ref{maxwellop}. Equation (\ref{maxwelleq}) exemplifies generalized parabolic type equation (\ref{genpartype}), where $U\mathbf{u}=(\varepsilon_1 u_1,\varepsilon_2 u_2, \varepsilon_2 u_3)$, and $U^{\frac{1}{2}}\mathbf{u}=(\sqrt{\varepsilon_1}u_1,\sqrt{\varepsilon_2}u_2,\sqrt{\varepsilon_2}u_3)$, and 
            \begin{equation*}
                \|\mathbf{u}\|_{U^{\frac{1}{2}}}^2=\varepsilon_1\|u_1\|^2_{L^2(\Omega)}+\varepsilon_2\|u_2\|^2_{L^2(\Omega)}+\varepsilon_2\|u_3\|^2_{L^2(\Omega)}\geq \min\{\varepsilon_1,\varepsilon_2\}\|\mathbf{u}\|^2_{L^2(\Omega;\mathbb{R}^3)}
            \end{equation*}
            Then we apply Theorem \ref{thgenpar}, ideas similar to Corollary \ref{corpar} and Lemmas \ref{midpointrule}, \ref{pc2norm}. 
        \end{proof}

        Now, we turn to the exact problem. Let $a_1=a_2=0$, $b_1=b_2=1$, $T=1$, $\varepsilon_2=3$, $\varepsilon_1=2$ and 
        \begin{equation*}
            \begin{aligned}
                &u_{1,0}(x_1,x_2)=\sin(\pi x_1)\sin(\pi x_2)\\
                &u_{2,0}(x_1,x_2)=u_{3,0}(x_1,x_2)=0
            \end{aligned}
        \end{equation*}
        Then the exact solution is 
        \begin{equation*}
            \begin{aligned}
                &u_1(t,x_1,x_2)=\sin(\pi x_1)\sin(\pi x_2)\cos\left(\sqrt{\frac{2}{\varepsilon_2\varepsilon_1}}\pi t\right)\\
                &u_2(t,x_1,x_2)=-\sqrt{\frac{\varepsilon_1}{2\varepsilon_2}}\sin(\pi x_1)\cos(\pi x_2)\sin\left(\sqrt{\frac{2}{\varepsilon_2\varepsilon_1}}\pi t\right)\\
                &u_3(t,x_1,x_2)=\sqrt{\frac{\varepsilon_1}{2\varepsilon_2}}\cos(\pi x_1)\sin(\pi x_2)\sin\left(\sqrt{\frac{2}{\varepsilon_2\varepsilon_1}}\pi t\right)
            \end{aligned}
        \end{equation*}
        We take 2 hidden layers of width 80, 20000 epochs, and training samples of size $M_{eq}=M_{in}=M_{bn}=2^{18}$. Also, we make experiments for the case $q=2$. 
        \begin{figure}[t]
            \centering
            \includegraphics[scale=0.4]{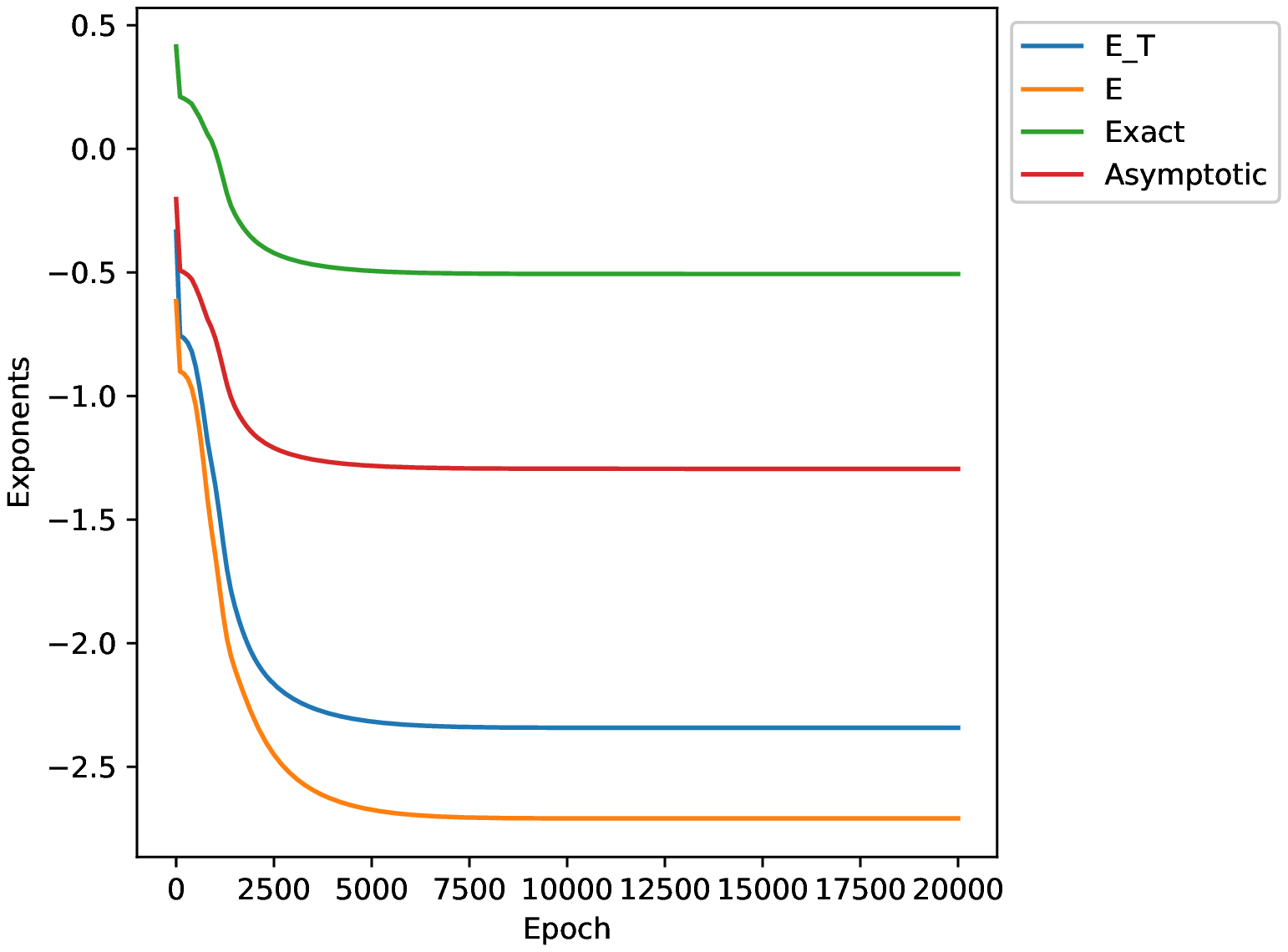}
            \includegraphics[scale=0.4]{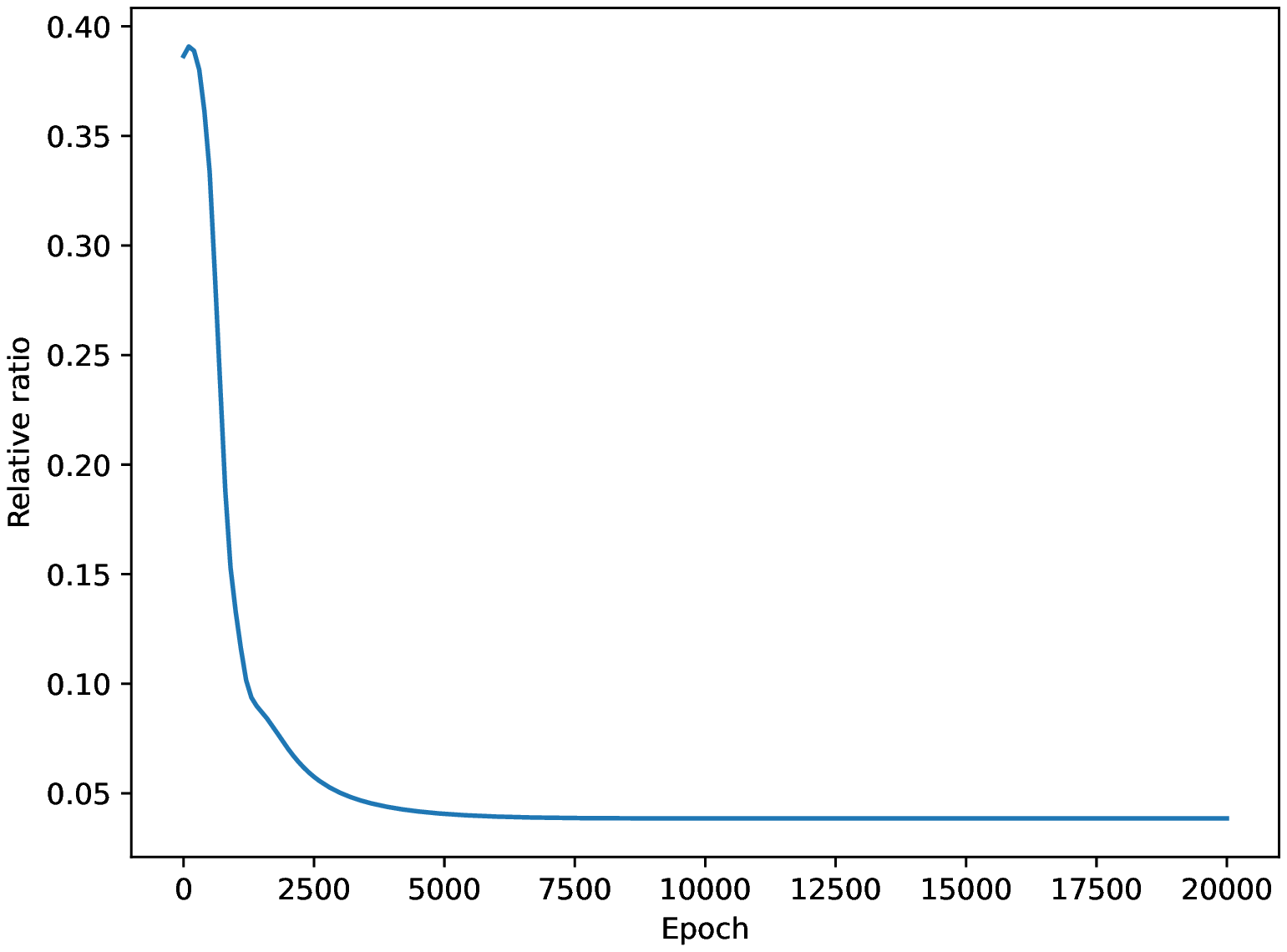}
            \caption{2D-Maxwell equation: $\lg(\mathcal{E})$, $\lg(\mathcal{E}_T)$, $\lg(\mathcal{E}_{exact})$, $\lg(\mathcal{E}_{asymp})$ and $\frac{\mathcal{E}_T}{\mathcal{E}_{asymp}}$}\label{maxwell1}
        \end{figure} 
        As we can see again, in the generalized case, the asymptotic estimate correlates with total error, while the exact estimate overestimates total error (Figure \ref{maxwell1}).

    \subsection{"Good" Boussinesq equation}\label{boussinesqss}
        The subsequent equation we consider is the so-called "good" Boussinesq equation. Let $\Omega=[a;b]$ and $X=L^2(\Omega)$.   
        \begin{equation}\label{boussinesqeq}
            \begin{aligned}
                &\partial^2_{t}u=\partial^2_{x}u-\partial^4_{x}u-\partial^2_{x}[u^2]\\
                &u(0,x)=u_0(x),~\partial_{t}u(0,x)=u_{t,0}(x)\\
                &\partial^2_{x}u(t,a)=g_1(t),~\partial^2_{x}u(t,b)=g_2(t)\\
                &\partial_{t}u(t,a)=g_3(t),~\partial_{t}u(t,b)=g_4(t)
            \end{aligned}
        \end{equation}

        One can formulate the following result.
        \begin{statement}
            Let $u$ be a solution to problem (\ref{boussinesqeq}) and $u_\theta$ be its neural network approximation. Then, one can estimate the total error in terms of residuals:
            \begin{equation*}
                \mathcal{E}\leq \mathcal{C}\frac{(e^{2T}-1)}{2}e^{36T\max\{\|u\|_{C^2(I\times\Omega)}^2,\|u_\theta\|_{C^2}^2(I\times\Omega)\}},
            \end{equation*}
            where
            \begin{equation*}
                \begin{aligned}
                    &\mathcal{C}=\|\mathcal{R}_{eq}\|_{L^2(I; X)}^2+\|\mathcal{R}_{in}\|_{H^2(\Omega)}^2+\|\mathcal{R}_{in,t}\|_{L^2(\Omega)}^2+\\
                    &+4\sqrt{2T}\max\{\|\partial_{t}\partial_xu_{\theta}\|_{C(\overline{I}\times\Gamma)},\|\partial_{t}\partial_xu\|_{C(\overline{I}\times\Gamma)}\}\|\mathcal{R}_{bn}\|_{L^2(I;\mathbb{R}^2)}+\\
                    &+8\sqrt{2T}\max\{\|\partial_{x}u_{\theta}\|_{C(\overline{I}\times\Gamma)},\|\partial_{x}u\|_{C(\overline{I}\times\Gamma)},\|\partial^3_{x}u_{\theta}\|_{C(\overline{I}\times\Gamma)},\|\partial^3_{x}u\|_{C(\overline{I}\times\Gamma)}\}\|\mathcal{R}_{bn,t}\|_{L^2(I;\mathbb{R}^2)}
                \end{aligned}
            \end{equation*}
            Furthermore, if residuals are at least of $C^2$-class, then one can estimate the total error in terms of training error:
            \begin{equation*}
                \mathcal{E}\leq \tilde{\mathcal{C}}\frac{(e^{2T}-1)}{2}e^{36T\max\{\|u\|_{C^2(I\times\Omega)}^2,\|u_\theta\|_{C^2}^2(I\times\Omega)\}},
            \end{equation*}
            where
            \begin{equation*}
                \begin{aligned}
                    &\tilde{\mathcal{C}}=\mathcal{E}_{T,eq}+\frac{(b-a)T}{6}\left((b-a)^2+T^2\right)\left\|\mathcal{R}_{eq}\right\|^2_{C^2(I\times \Omega)}M_{eq}^{-1}+\\
                    &+\mathcal{E}_{T,in}+\frac{(b-a)^3}{2}\left\|\mathcal{R}_{in}\right\|^2_{C^4(\Omega)}M_{in}^{-2}+\mathcal{E}_{T,in,t}+\frac{(b-a)^3}{6}\left\|\mathcal{R}_{in,t}\right\|^2_{C^2(\Omega)}M_{in,t}^{-2}+\\
                    &+4\sqrt{2T}\max\{\|\partial_{t}\partial_xu_{\theta}\|_{C(\overline{I}\times\Gamma)},\|\partial_{t}\partial_xu\|_{C(\overline{I}\times\Gamma)}\}\sqrt{\mathcal{E}_{T,bn}+\frac{T^3}{3}\|\mathcal{R}_{bn}\|_{C(\overline{I};\mathbb{R}^2)}^2M_{bn}^{-2}}+\\
                    &+8\sqrt{2T}\max\{\|\partial_{x}u_{\theta}\|_{C(\overline{I}\times\Gamma)},\|\partial_{x}u\|_{C(\overline{I}\times\Gamma)},\|\partial^3_{x}u_{\theta}\|_{C(\overline{I}\times\Gamma)},\|\partial^3_{x}u\|_{C(\overline{I}\times\Gamma)}\}\sqrt{\mathcal{E}_{T,bn,t}+\frac{T^3}{3}\|\mathcal{R}_{bn,t}\|_{C^2(I;\mathbb{R}^2)}^2M_{bn,t}^{-2}}
                \end{aligned}
            \end{equation*}
            That is asymptotically
            \begin{equation*}
                \mathcal{E}=\mathcal{O}\left(\mathcal{E}_{T,eq}+M_{eq}^{-1}+\mathcal{E}_{T,in}+M_{in}^{-2}+\mathcal{E}_{T,in,t}+M_{in,t}^{-2}+\sqrt{\mathcal{E}_{T,bn}+M_{bn}^{-2}}+\sqrt{\mathcal{E}_{T,bn,t}+M_{bn,t}^{-2}}\right)
            \end{equation*}
        \end{statement}
        \begin{proof}
            We have
            \begin{equation*}
                A(t)\equiv0,~U_1=-\partial^2_{x},~U_2=\partial^4_{x},~F(t)(u)\equiv -\partial^2_{x}[u^2],~\|\cdot\|_{{\rm Id}+U^{\frac{1}{2}}_1+U^{\frac{1}{2}}_2}=\|\cdot \|_{H^2(\Omega)}
            \end{equation*}
            For $F$, we have
            \begin{equation*}
                \begin{aligned}
                    &\|F(y_1)-F(y_2)\|_{L^2(\Omega)}^2\leq 36\max\{\|y_1\|_{C^2(\Omega)}^2,\|y_2\|_{C^2(\Omega)}^2)\}\|\hat{y}\|^2_{H^2(\Omega)}
                \end{aligned}
            \end{equation*}
            As for the $U_1$ and $U_2$,
            \begin{equation*}
                |\epsilon_1(\hat{\chi},\hat{y})|\leq 2\max\{\|\chi'_1\|_{C(\Gamma)},\|\chi'_2\|_{C(\Gamma)}\}\left(|B_2\hat{y}|_1+|B_2\hat{y}|_2\right)
            \end{equation*}
            and
            \begin{equation*}
                \begin{aligned}
                    &|\epsilon_2(\hat{\chi},\hat{y})|\leq 2\max\{\|\chi'''_1\|_{C(\Gamma)},\|\chi'''_2\|_{C(\Gamma)}\}\left(|B_2\hat{y}|_1+|B_2\hat{y}|_2\right)+2\max\{\|y'_1\|_{C(\Gamma)},\|y'_2\|_{C(\Gamma)}\}\left(|B_1\hat{\chi}|_1+|B_1\hat{\chi}|_2\right)
                \end{aligned}
            \end{equation*}
            Then we apply Theorem \ref{thhyp}, ideas similar to Corollary \ref{corpar} and Lemmas \ref{midpointrule}, \ref{pc2norm}. 
        \end{proof}

        Now, we turn to the exact problem. Let $a=-1$, $b=1$, $T=1$ and 
        \begin{equation*}
            \begin{aligned}
                &u_0(x)=\frac{9}{8}{\rm sech}^2\left(x\right)\\
                &u_{t,0}(x)=\frac{9\sqrt{3}}{32}{\rm tanh}\left(x\right){\rm sech}^2\left(x\right)\\
                &g_1(t)=-\frac{27}{64}{\rm sech}^2\left(1+\frac{t}{2}\right)\left(1-3{\rm tanh}^2\left(1+\frac{t}{2}\right)\right)\\
                &g_2(t)=-\frac{27}{64}{\rm sech}^2\left(1-\frac{t}{2}\right)\left(1-3{\rm tanh}^2\left(1-\frac{t}{2}\right)\right)\\
                &g_3(t)=-\frac{9\sqrt{3}}{32}{\rm tanh}\left(1+\frac{t}{2}\right){\rm sech}^2\left(1+\frac{t}{2}\right)\\
                &g_4(t)=\frac{9\sqrt{3}}{32}{\rm tanh}\left(1-\frac{t}{2}\right){\rm sech}^2\left(1-\frac{t}{2}\right)
            \end{aligned}
        \end{equation*}
        Then the exact solution is 
        \begin{equation*}
            u(t,x)=\frac{9}{8}{\rm sech}^2\left(x-\frac{t}{2}\right)
        \end{equation*}
        We take 2 hidden layers of width 80, 20000 epochs, and training samples of size $M_{eq}=2^{18}$ and $M_{in}=M_{in,t}=M_{bn}=M_{bn,t}=2^{12}$. 
        \begin{figure}[t]
            \centering
            \includegraphics[scale=0.4]{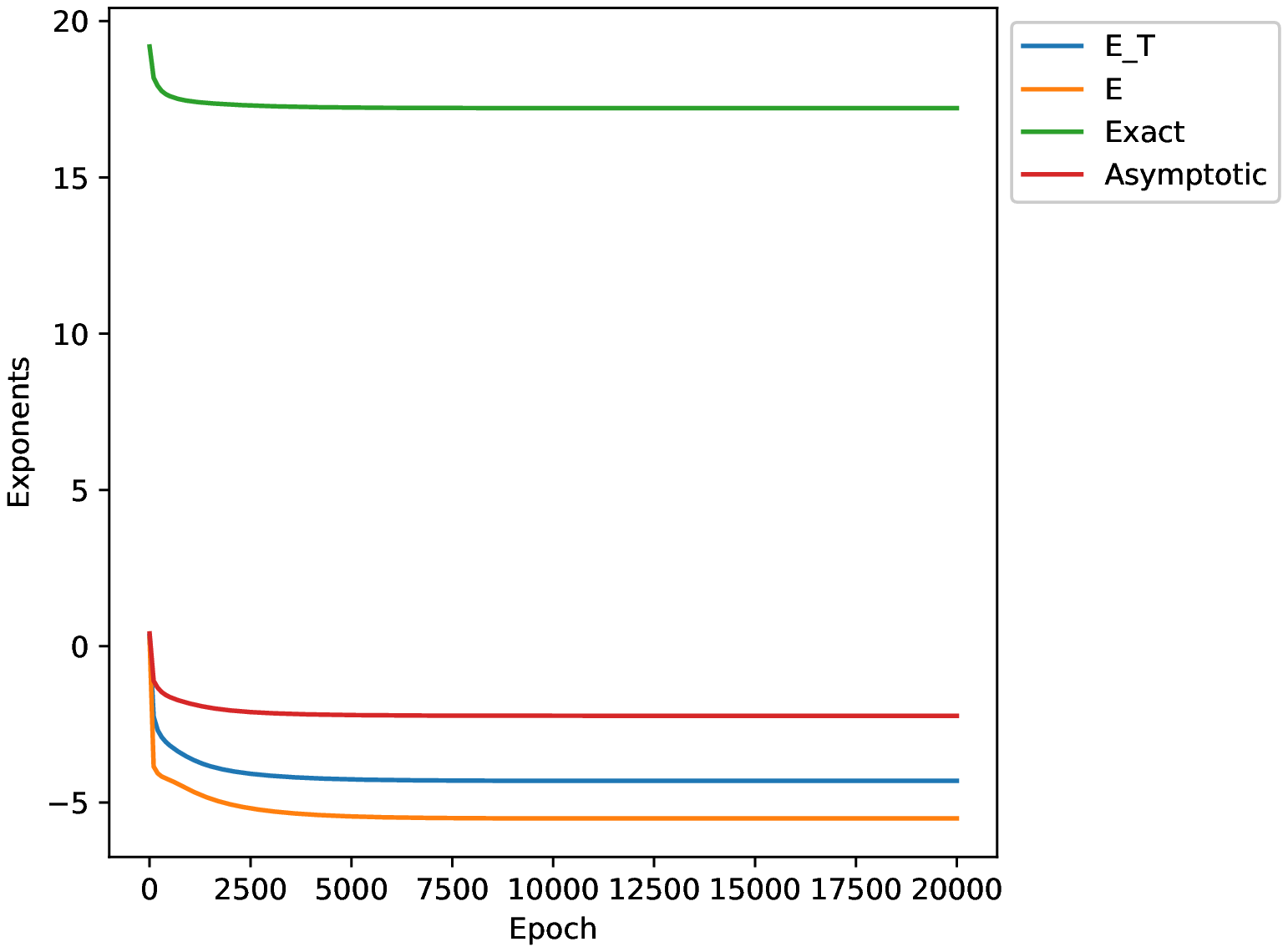}
            \includegraphics[scale=0.4]{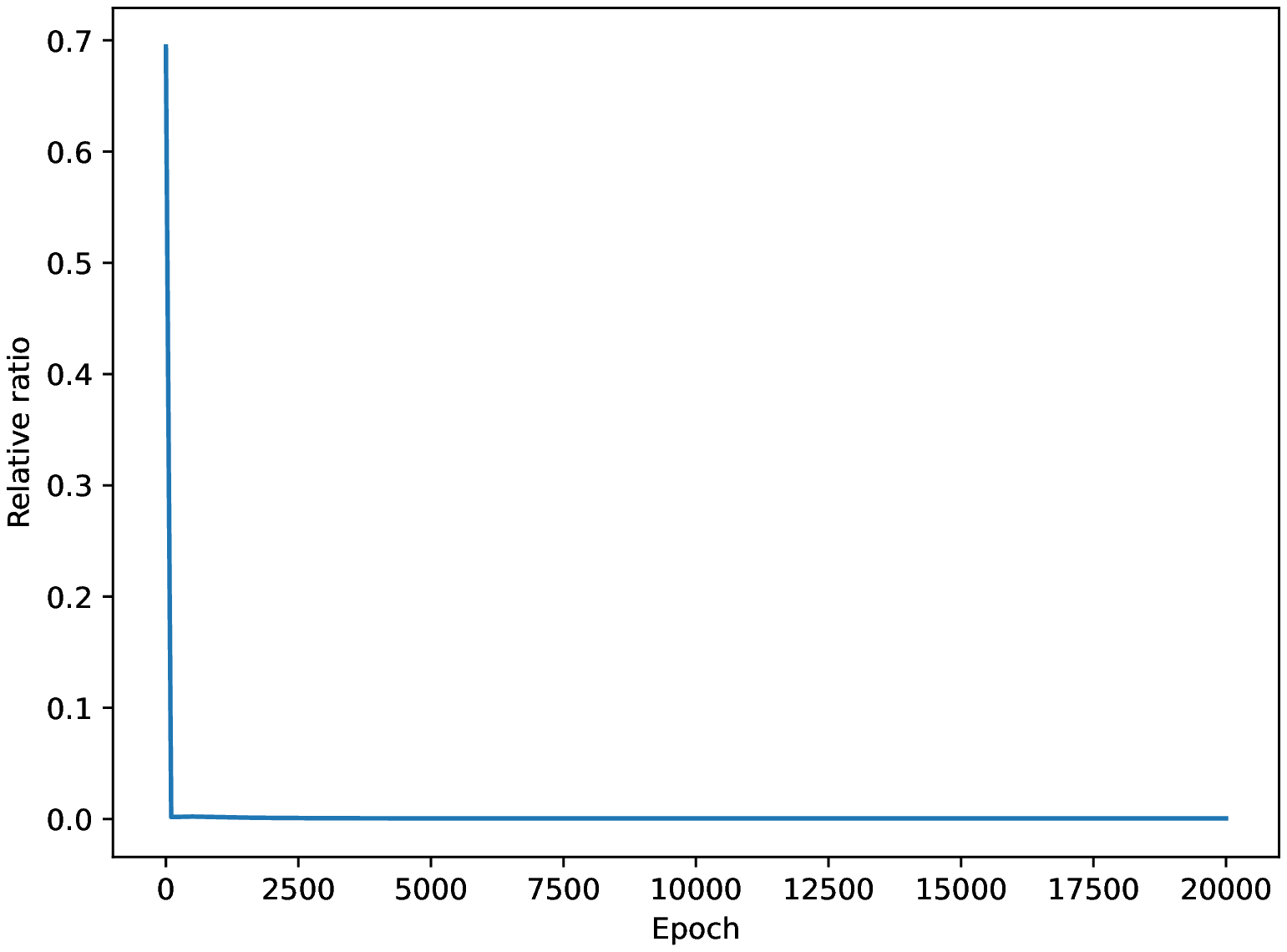}
            \caption{Boussinesq equation: $\lg(\mathcal{E})$, $\lg(\mathcal{E}_T)$, $\lg(\mathcal{E}_{exact})$, $\lg(\mathcal{E}_{asymp})$ and $\frac{\mathcal{E}_T}{\mathcal{E}_{asymp}}$}\label{boussinesq1}
        \end{figure} 
        For the hyperbolic case, again, we see the correlation between the asymptotic estimate and total error, and that exact estimate highly overestimates total error (Figure \ref{boussinesq1}). It is not surprising since the exact estimate involves $e^{36T}$ as a factor.  

    \subsection{Rayleigh wave equation}\label{rayleighss}
        The following equation we consider is the Rayleigh wave equation. Let $\Omega=[a;b]$ and $X=L^2(\Omega)$.     

        \begin{equation}\label{rayleigheq}
            \begin{aligned}
                &\partial^2_{t}u=\partial^2_{x}u+\varepsilon(\partial_{t}u-(\partial_{t}u)^3)+\phi\\
                &u(0,x)=u_0(x),~\partial_{t}u(0,x)=u_{0,t}(x)\\
                &\partial_{x}u(t,a)=\partial_{x}u(t,b)\\
                &\partial_{t}u(t,a)=\partial_{t}u(t,b)\\
            \end{aligned}
        \end{equation}

        One can formulate the following result.
        \begin{statement}
            Let $u$ be a solution to problem (\ref{rayleigheq}) and $u_\theta$ be its neural network approximation. Then, one can estimate the total error in terms of residuals:
            \begin{equation*}
                \mathcal{E}\leq \mathcal{C}\frac{(e^{2T}-1)}{2}e^{2\varepsilon T},
            \end{equation*}
            where
            \begin{equation*}
                \begin{aligned}
                    &\mathcal{C}=\|\mathcal{R}_{eq}\|_{L^2(I\times\Omega)}^2+\|\mathcal{R}_{in}\|_{H^1(\Omega)}^2+\|\mathcal{R}_{in,t}\|_{L^2(\Omega)}^2+\\
                    &+4\sqrt{T}\max\{\|\partial_{t}u_{\theta}\|_{C(\overline{I}\times\Gamma)},\|\partial_{t}u\|_{C(\overline{I}\times\Gamma)}\}\|\mathcal{R}_{bn}\|_{L^2(I)}+\\
                    &+4\sqrt{T}\max\{\|\partial_{x}u_{\theta}\|_{C(\overline{I}\times\Gamma)},\|\partial_{x}u\|_{C(\overline{I}\times\Gamma)}\}\|\mathcal{R}_{bn,t}\|_{L^2(I)}
                \end{aligned}
            \end{equation*}
            Furthermore, if residuals are at least of $C^2$-class, then one can estimate the total error in terms of training error:
            \begin{equation*}
                \mathcal{E}\leq \tilde{\mathcal{C}}\frac{(e^{2T}-1)}{2}e^{2\varepsilon T},
            \end{equation*}
            where
            \begin{equation*}
                \begin{aligned}
                    &\tilde{\mathcal{C}}=\mathcal{E}_{T,eq}+\frac{(b-a)T}{6}\left((b-a)^2+T^2\right)\left\|\mathcal{R}_{eq}\right\|^2_{C^2(I\times \Omega)}M_{eq}^{-1}+\\
                    &+\mathcal{E}_{T,in,\sigma}+\frac{(b-a)^3}{3}\left\|\mathcal{R}_{in}\right\|^2_{C^3(\Omega)}M_{in}^{-2}+\mathcal{E}_{T,in,t}+\frac{(b-a)^3}{6}\left\|\mathcal{R}_{in}\right\|^2_{C^2(\Omega)}M_{in,t}^{-2}+\\
                    &+4\sqrt{T}\max\{\|\partial_{t}u_{\theta}\|_{C(\overline{I}\times\Gamma)},\|\partial_{t}u\|_{C(\overline{I}\times\Gamma)}\}\sqrt{\mathcal{E}_{T,bn}+\frac{T^3}{6}\|\mathcal{R}_{bn}\|^2_{C^2(I)}M_{bn}^{-2}}+\\
                    &+4\sqrt{T}\max\{\|\partial_{x}u_{\theta}\|_{C(\overline{I}\times\Gamma)},\|\partial_{x}u\|_{C(\overline{I}\times\Gamma)}\}\sqrt{\mathcal{E}_{T,bn,t}+\frac{T^3}{6}\|\mathcal{R}_{bn,t}\|^2_{C^2(I)}M_{bn,t}^{-2}}
                \end{aligned}
            \end{equation*}
            That is asymptotically
            \begin{equation*}
                \mathcal{E}=\mathcal{O}\left(\mathcal{E}_{T,eq}+M_{eq}^{-1}+\mathcal{E}_{T,in}+M_{in}^{-2}+\mathcal{E}_{T,in,t}+M_{in,t}^{-2}+\sqrt{\mathcal{E}_{T,bn}+M_{bn}^{-2}}+\sqrt{\mathcal{E}_{T,bn,t}+M_{bn,t}^{-2}}\right)
            \end{equation*}
        \end{statement}
        \begin{proof}
            We have
            \begin{equation*}
                A(t)(y)=\varepsilon(y-y^3),~U=-\partial^2_{x},~F(t)\equiv0,~\|\cdot\|_{{\rm Id}+U^{\frac{1}{2}}}=\|\cdot \|_{H^1(\Omega)}
            \end{equation*}
            For $A$, we have
            \begin{equation*}
                \begin{aligned}
                    &\langle A(y_1)-A(y_2),\hat{y}\rangle_2=\varepsilon\|\hat{y}\|^2-\varepsilon\int_{a}^{b}(y^2_1+y_1y_2+y^2_2)\hat{y}^2dx\leq \varepsilon\|\hat{y}\|^2
                \end{aligned}
            \end{equation*}
            As for the operator $U$, 
            \begin{equation*}   
                \begin{aligned}
                    &|\epsilon_1(\hat{\chi},\hat{y})|\leq 2\max\{\|y_1\|_{C(\Gamma)},\|y_2\|_{C(\Gamma)}\}|B_1\hat{\chi}|+2\max\{\|\chi'_1\|_{C(\Gamma)},\|\chi'_2\|_{C(\Gamma)}\}|B_2\hat{y}| 
                \end{aligned}
            \end{equation*}
            Then we apply Theorem \ref{thhyp}, ideas similar to Corollary \ref{corpar} and Lemmas \ref{midpointrule}, \ref{pc2norm}.  
        \end{proof}

        Now, we turn to the exact problem. Let $a=0$, $b=2\pi$, $T=1$, $\varepsilon=1$, $\phi=\frac{\varepsilon}{2}\sin(x-t)\sin(2(x-t))$ and 
        \begin{equation*}
            \begin{aligned}
                &u_0(x)=\sin(x)\\
                &u_{0,t}(x)=-\cos(x)
            \end{aligned}
        \end{equation*}
        Then the exact solution is 
        \begin{equation*}
            u(t,x)=\sin(x-t)
        \end{equation*}
        We take 2 hidden layers of width 80, 20000 epochs, and training samples of size $M_{eq}=2^{18}$ and $M_{in}=M_{in,t}=M_{bn}=M_{bn,t}=2^{12}$. Also, we make experiments for the case $q=2$. 
        \begin{figure}[t]
            \centering
            \includegraphics[scale=0.4]{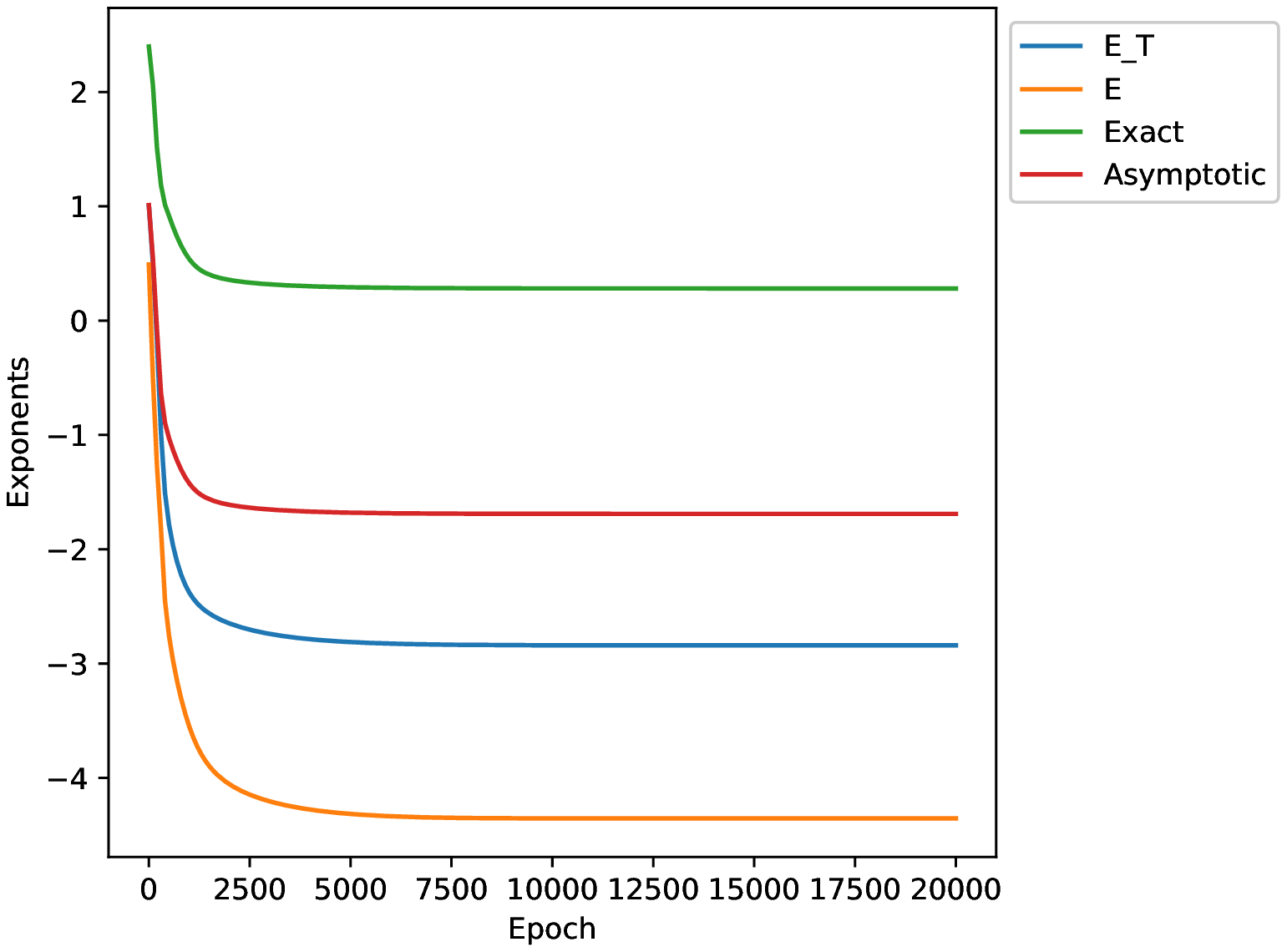}
            \includegraphics[scale=0.4]{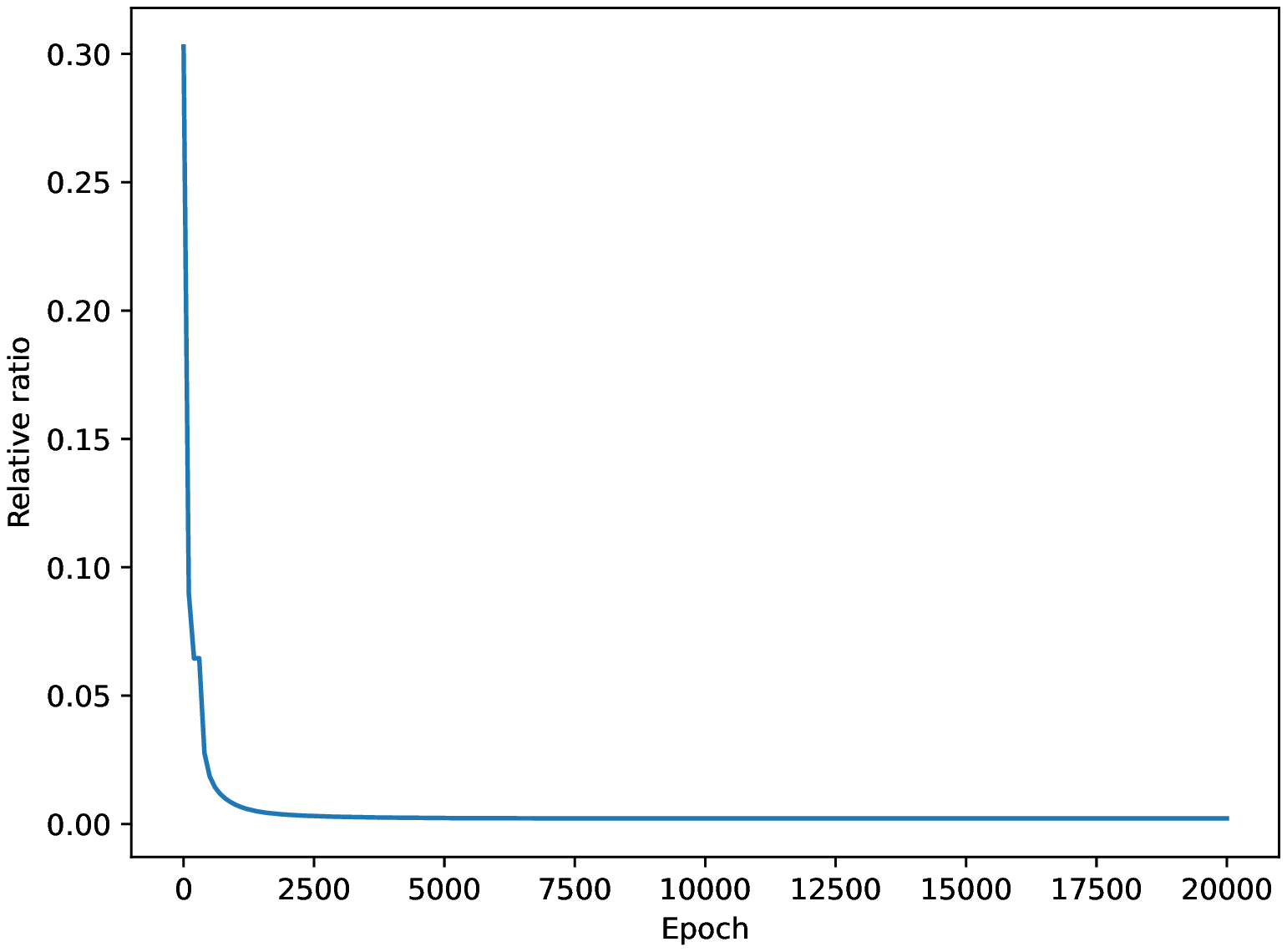}
            \caption{Rayleigh equation: $\lg(\mathcal{E})$, $\lg(\mathcal{E}_T)$, $\lg(\mathcal{E}_{exact})$, $\lg(\mathcal{E}_{asymp})$ and $\frac{\mathcal{E}_T}{\mathcal{E}_{asymp}}$}\label{rayleigh1}
        \end{figure} 
        Again, we see the correlation between the asymptotic estimate and total error, and that exact estimate overestimates total error (Figure \ref{rayleigh1}). Also, we observe that the exact estimate $<1$ since its exponent factor $e^{2\varepsilon T}=e^2$ is not substantial. 

    \subsection{Poisson equation with piecewise continuous forcing function}\label{ellipticss}
        The last equation we consider is the Poisson equation with piecewise continuous forcing function $\phi$. Let $\Omega=[-1;1]\times[-1;1]$, $X=L^p(\Omega)$ and $q=p\geq2$. 

        \begin{equation}\label{ellipticeq}
            \begin{aligned}
                &-\Delta y= \phi,\\
                &y(-1,x_2)=f_{1},~y(1,x_2)=f_{2}\\
                &y(x_1,-1)=f_{3},~y(x_1,1)=f_{4}
            \end{aligned}
        \end{equation} 

        For the estimates, one can formulate the following result.
        \begin{statement}
            Let $u$ be a solution to problem (\ref{ellipticeq}) and $u_\theta$ be its neural network approximation. Then, one can estimate the total error in terms of residuals:
            \begin{equation*}
                \begin{aligned}
                &\mathcal{E}\leq \frac{\pi_{2,tr}^pp^{2p}}{2^p(p-1)^p}\|\mathcal{R}_{eq}\|_{L^p(\Omega)}^p+2\pi_{2,tr}p\left(2^{p-1}\|\mathcal{R}_{bn}\|^p_{L^p(\Omega;\mathbb{R}^4)}+\right.\\
                &+\left. \frac{2^{\frac{(p+1)(p-1)^3}{p^3}-2}p}{(p-1)}\max\{\|\partial_{x_1}y\|_{C(\Omega)},\|\partial_{x_1}y_{\theta}\|_{C(\Omega)},\|\partial_{x_2}y\|_{C(\Omega)},\|\partial_{x_2}y_{\theta}\|_{C(\Omega)}\}\|\mathcal{R}_{bn}\|^{p-1}_{L^p((-1;1);\mathbb{R}^4)}\right),
                \end{aligned}
            \end{equation*}
            where $\pi_{2,tr}$ is a constant of $L^2$-Poincare inequality with trace term. 

            Furthermore, if residuals are piecewise at least of $C^2$-class, then one can estimate the total error in terms of training error:
            \begin{equation*}
                \begin{aligned}
                &\mathcal{E}\leq \frac{\pi_{2,tr}^pp^{2p}}{2^p(p-1)^p}\left(\mathcal{E}_{eq,T} + \frac{4p^2}{3}\left\|\mathcal{R}_{eq}\right\|^p_{W^{2,\infty}(\Omega)}M^{-1}_{eq}\right)+\\
                &+2\pi_{2,tr}p\left(2^{p-1}\left(\mathcal{E}_{bn,T} + \frac{4p^2}{3}\|\mathcal{R}_{bn}\|_{W^{2,\infty}((-1;1);\mathbb{R}^4)}^pM_{bn}^{-2}\right)+\right.\\
                &+\left. \frac{2^{\frac{(p+1)(p-1)^3}{p^3}-2}p}{(p-1)}\max\{\|\partial_{x_1}y\|_{C(\Omega)},\|\partial_{x_1}y_{\theta}\|_{C(\Omega)},\|\partial_{x_2}y\|_{C(\Omega)},\|\partial_{x_2}y_{\theta}\|_{C(\Omega)}\}\cdot\right.\\
                &\left.\cdot\left(\mathcal{E}_{bn,T} + \frac{4p^2}{3}\|\mathcal{R}_{bn}\|_{W^{2,\infty}((-1;1);\mathbb{R}^4)}^pM_{bn}^{-2}\right)^{\frac{p-1}{p}}\right)
                \end{aligned}
            \end{equation*}
            That is asymptotically
            \begin{equation*}
                \mathcal{E}=\mathcal{O}\left(\mathcal{E}_{T,eq}+M_{eq}^{-1}+\left(\mathcal{E}_{T,bn}+M_{bn}^{-2}\right)^{\frac{p-1}{p}}\right)
            \end{equation*}
        \end{statement}
        \begin{proof}
            We refer to Example \ref{ellipticop} and Remark \ref{remclos}. For $\psi$, we have
            \begin{equation*}
                \begin{aligned}
                    &\int_{\Gamma}\left(\nabla \hat{y}\cdot\mathbf{n}_\Gamma\right)\hat{y}|\hat{y}|^{p-2}d\Gamma + \int_{\Gamma}|\hat{y}|^pd\Gamma\leq 2^{p-1}\|\mathcal{R}_{bn}\|^p_{L^p(\Omega;\mathbb{R}^4)}+\\
                    & +2^{\frac{(p+1)(p-1)^3}{p^3}}\max\{\|y_{x_1}\|_{C(\Omega)},\|y_{\theta, x_1}\|_{C(\Omega)},\|y_{x_2}\|_{C(\Omega)},\|y_{\theta, x_2}\|_{C(\Omega)}\}\|\mathcal{R}_{bn}\|^{p-1}_{L^p((-1;1);\mathbb{R}^4)}
                \end{aligned}
            \end{equation*}
            Then, for estimation in terms of residuals, we apply Theorem \ref{thhyp}. We cannot explicitly apply ideas similar to Corollary \ref{corpar} and Lemmas \ref{midpointrule}, \ref{pc2norm} since residuals are not of $C^2$-class. However, we can consider $\Omega^{*}\subset\Omega$, where residuals are of $C^2$-class. Since integration doesn't depend on a null set, we can deal with residuals on $\Omega^{*}$, where we can apply Lemmas \ref{midpointrule}, \ref{pc2norm}. 
        \end{proof}
        Now, we turn to the exact problem. Let $\Omega^{*}=\left([-1;0)\cup(0;1]\right)\times\left([-1;0)\cup(0;1]\right)$, $\phi=2{\rm sgn}(x_1)+2{\rm sgn}(x_2)$ and 
        \begin{equation*}
            \begin{aligned}
                &f_{1}(x_2)=1-|x_2|x_2,~f_{2}(x_2)=-1-|x_2|x_2\\
                &f_{3}(x_1)=1-|x_1|x_1,~f_{4}(x_1)=-1-|x_1|x_1
            \end{aligned}
        \end{equation*}
        Then the exact solution is 
        \begin{equation*}
            y(x_1,x_2)=-|x_1|x_1-|x_2|x_2
        \end{equation*}
        
        Again, the neural network is of 2 hidden layers of width 80, taking $(x_1,x_2,|x_1|,|x_2|)$ as input. The count of epochs is 20000, and training samples are of size $M_{eq}=2^{18}$ and $M_{bn}=2^{12}$. Also, we make experiments for the cases $p\in\{2,3,4,5\}$. 
        \begin{figure}[t]
            \centering
            \includegraphics[scale=0.4]{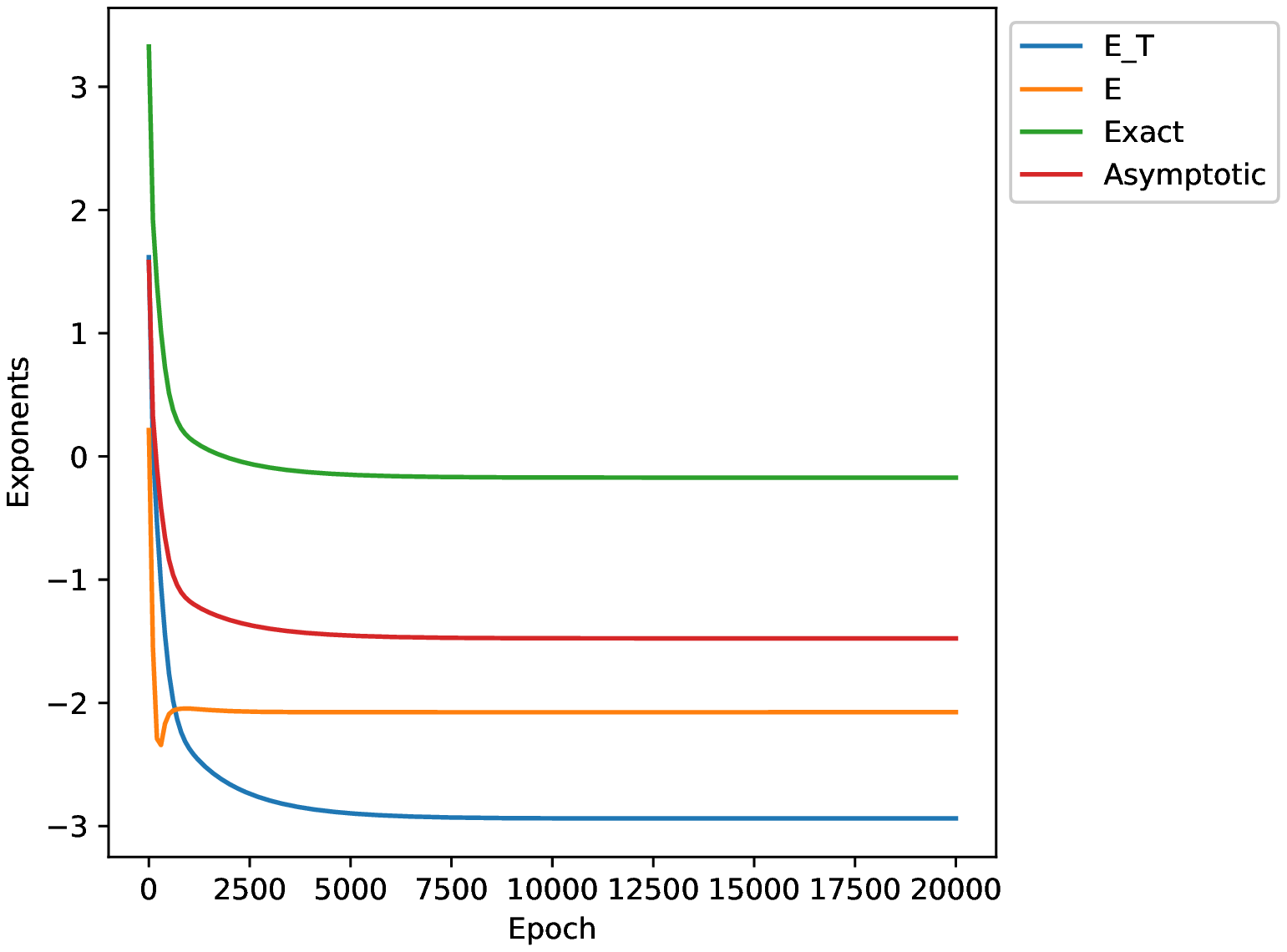}
            \includegraphics[scale=0.4]{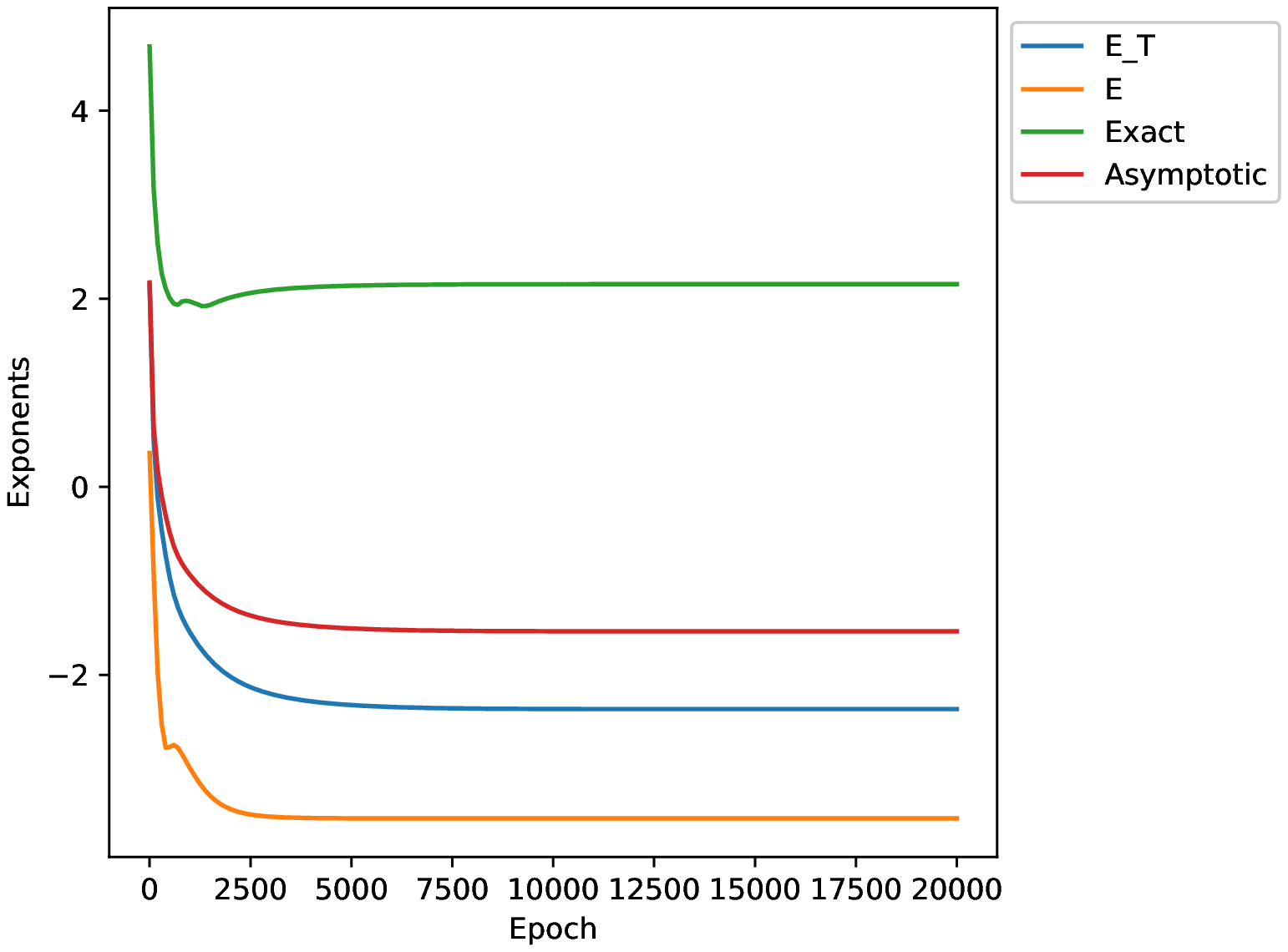}
            \includegraphics[scale=0.4]{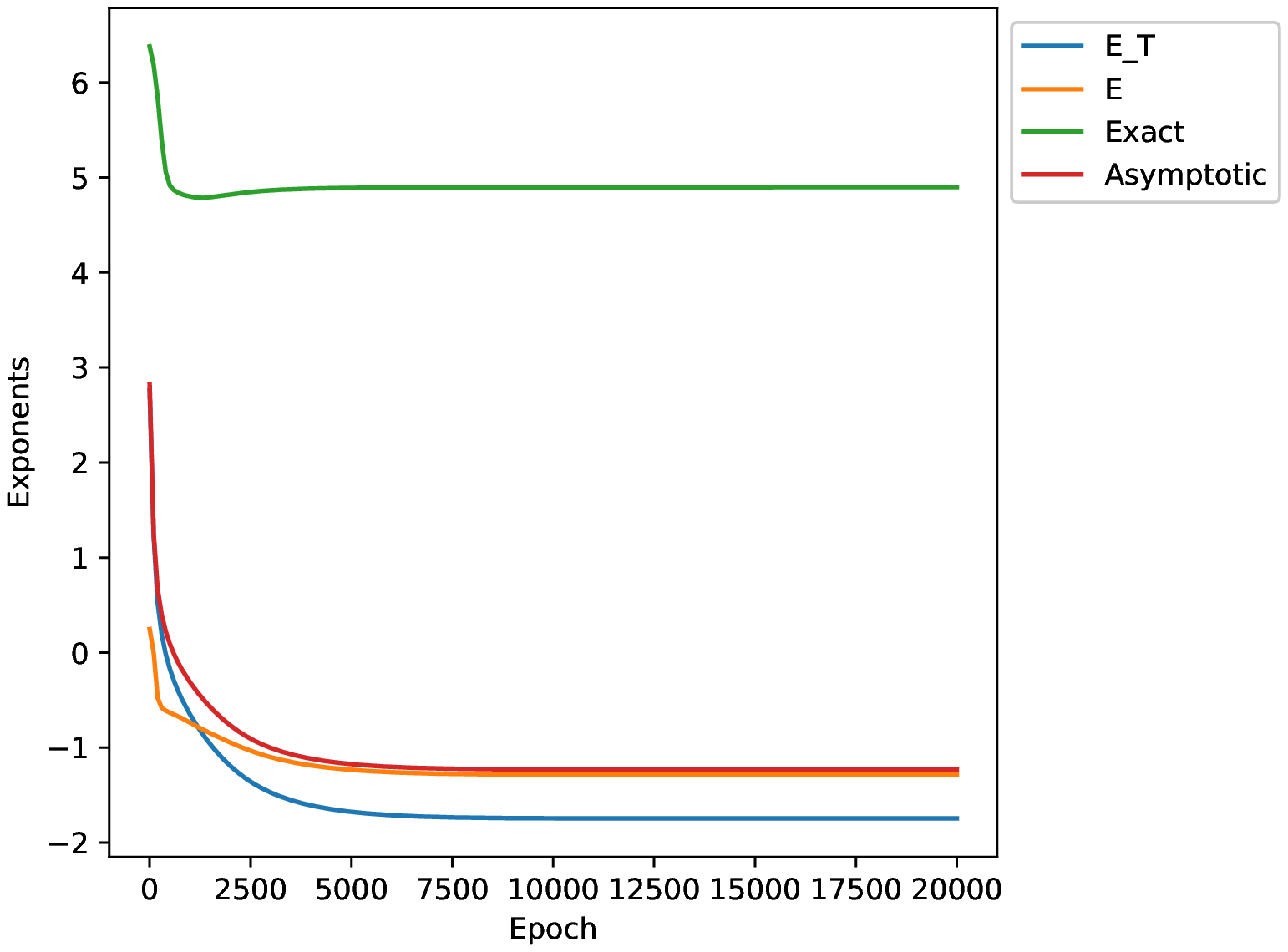}
            \includegraphics[scale=0.4]{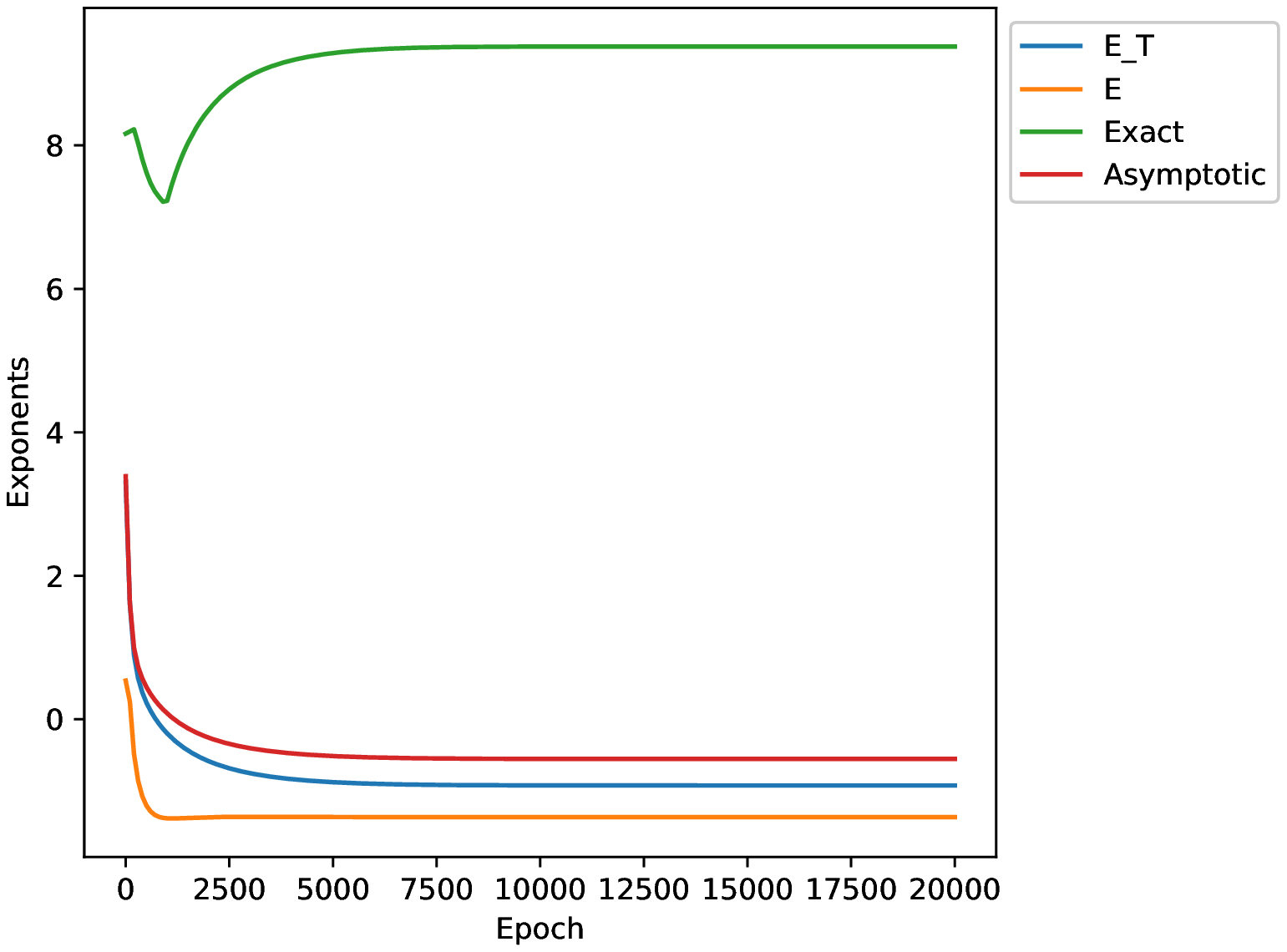}
            \caption{Poisson equation: $\lg(\mathcal{E})$, $\lg(\mathcal{E}_T)$, $\lg(\mathcal{E}_{exact})$, $\lg(\mathcal{E}_{asymp})$ for $L^p$, $p\in\{2,3,4,5\}$}\label{elliptic1}
        \end{figure} 
        \begin{figure}[t]
            \centering
            \includegraphics[scale=0.4]{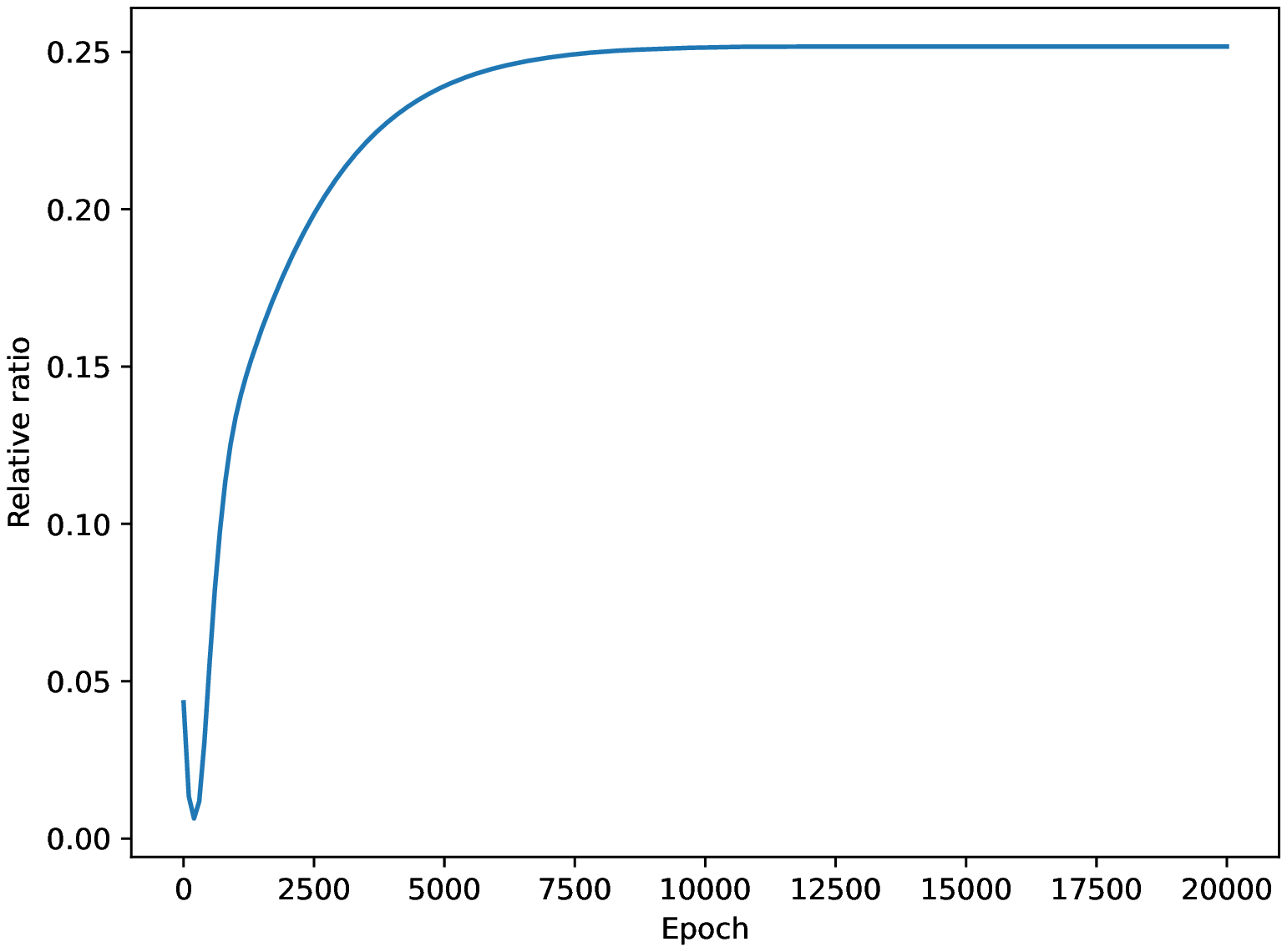}
            \includegraphics[scale=0.4]{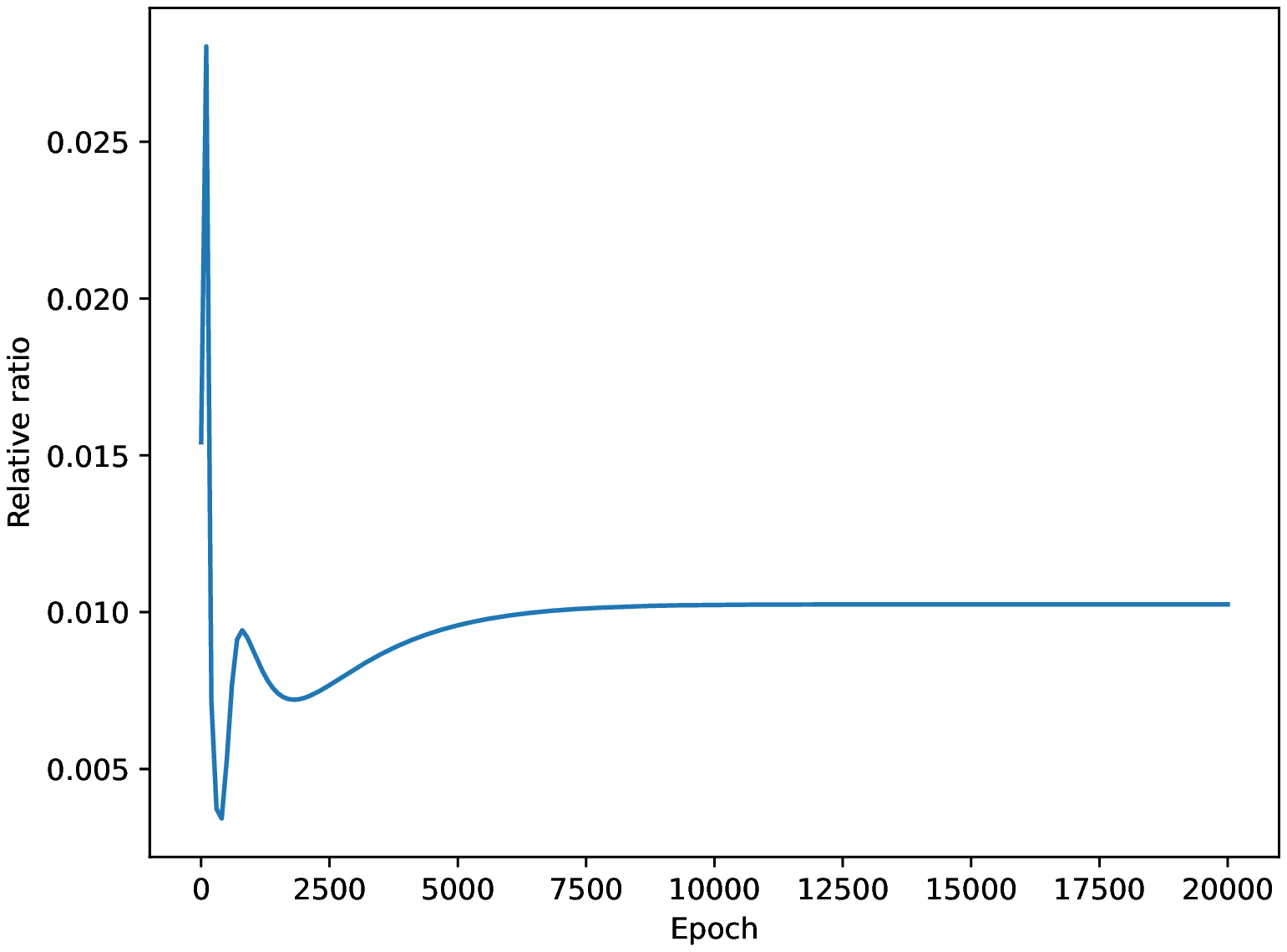}
            \includegraphics[scale=0.4]{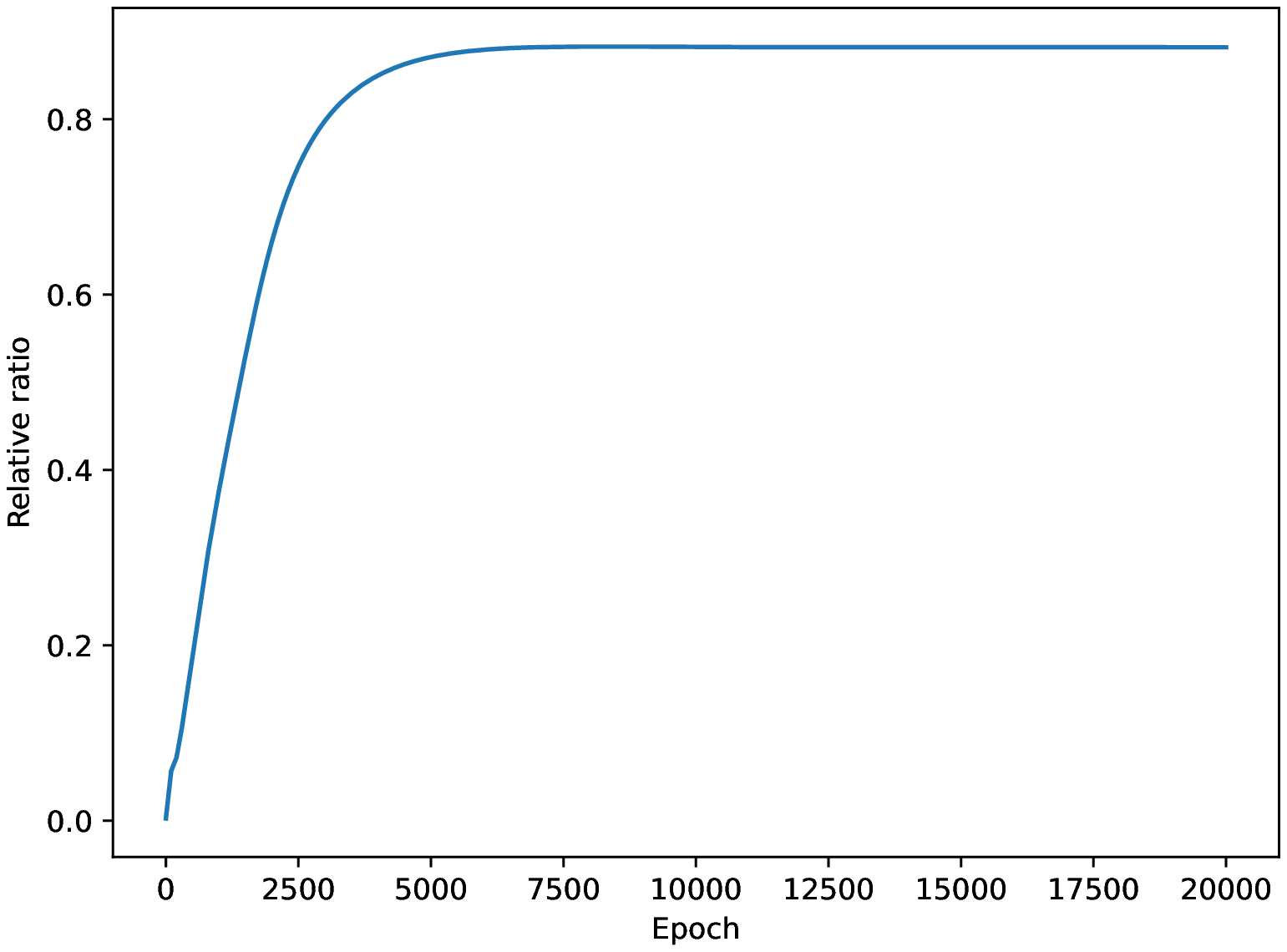}
            \includegraphics[scale=0.4]{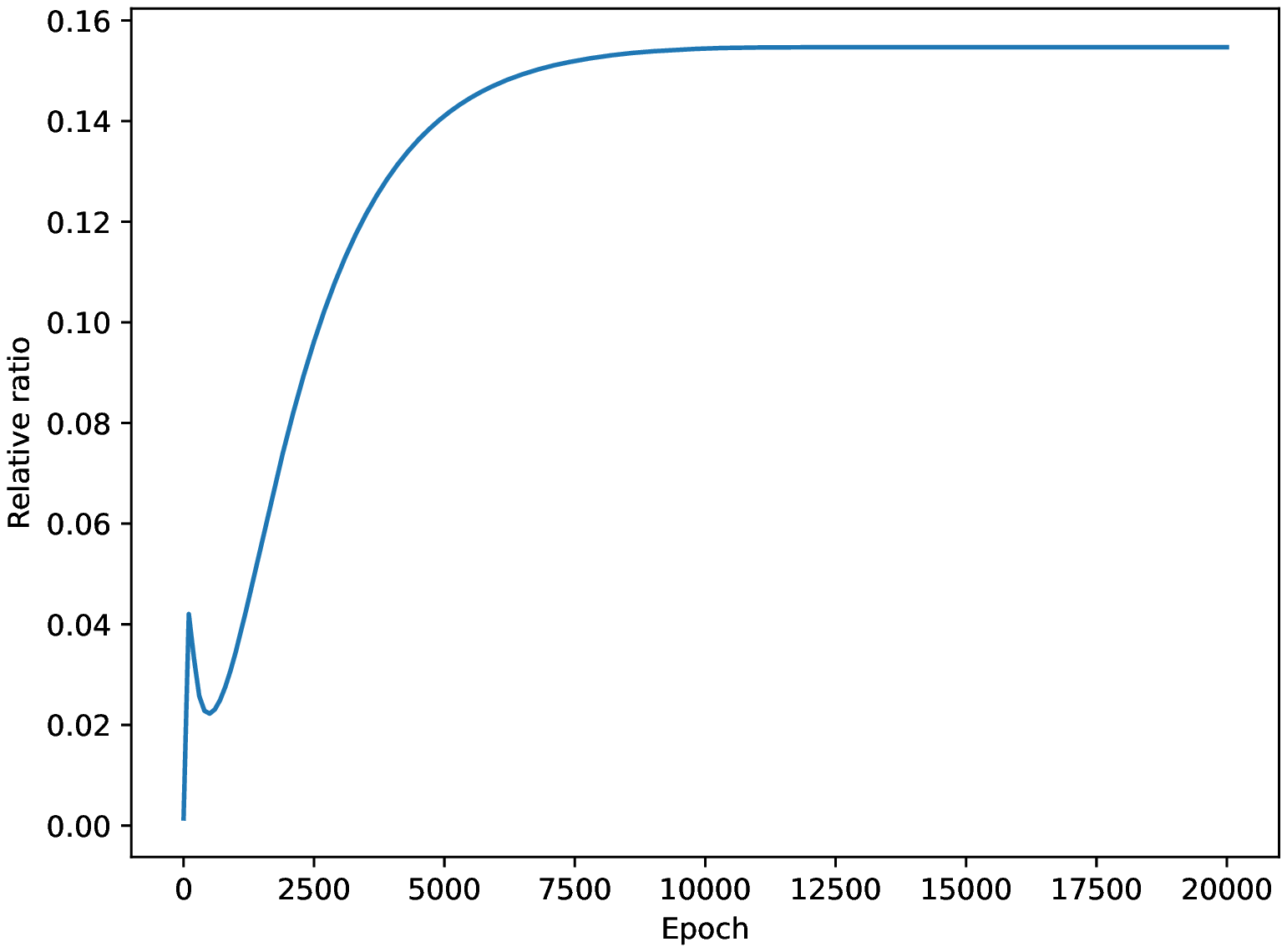}
            \caption{Poisson equation: $\frac{\mathcal{E}_T}{\mathcal{E}_{asymp}}$ for $L^p$, $p\in\{2,3,4,5\}$}\label{elliptic2}
        \end{figure} 
        Again, we see the correlation between asymptotic estimate and total error behaviour, and that exact estimate overestimates total error (Figures \ref{elliptic1}, \ref{elliptic2}). Thus, even for equations with singularities, we can apply the estimation. 

\section{Conclusion}
    Hence, we gave an operator description of three PDE classes to which the error estimation, due to S. Mishra et al., can be applied. These classes cover a wide range of equations. Furthermore, we dealt with a more general real $p$-form instead of the inner product. For each class, we obtained an exact error estimate. Also, replacing the mean value with a projection, we extended the Bramble-Hilbert lemma to the non-Hilbert Sobolev spaces. Finally, we stated the theorem on neural network approximation existence in such a space. 

    However, obtained estimates remain a posteriori since we cannot guarantee that the training error will be small. It is still an open question under what conditions the gradient descent algorithm converges to a global minimum in the context of the PINN training.

\begin{appendices}
    \section{Functional spaces and auxiliary lemmas}
        For any space $\mathcal{Y}$ of functions $\Omega\to\mathcal{Z}$, we use denotation $\mathcal{Y}(\Omega;\mathcal{Z})$. If $\mathcal{Z}=\mathbb{R}$, we simply denote $\mathcal{Y}(\Omega)$. For the conventional functional spaces, we use the following norms:
        \begin{equation*}
            \|\mathbf{y}\|_{L^p(\Omega;\mathbb{C}^{n})}=\left(\sum_{i=1}^{n}\|y_i\|^p_{L^p(\Omega;\mathbb{C})}\right)^{\frac{1}{p}}
        \end{equation*}
        \begin{equation*}
            \|f\|_{W^{\sigma,p}(\Omega;\mathbb{R}^n)}=\left(\sum_{i=1}^{n}\sum_{\nu=0}^{k}\sum_{|\iota|=\nu}\|\partial^\iota f_i\|^p_{L^p(\Omega)} \right)^{\frac{1}{p}}
        \end{equation*}
        \begin{equation*}
            \|f\|_{W^{\sigma,\infty}(\Omega;\mathbb{R}^n)}=\max_{1\leq i\leq n}\max_{0\leq \nu\leq \sigma}\max_{|\iota|=\nu}\|\partial^{\iota}f\|_{L^{\infty}(\Omega)}
        \end{equation*}
        \begin{equation*}
            \|f\|_{C^{\sigma}(\Omega;\mathbb{R}^n)}=\max_{1\leq i\leq n}\max_{0\leq \nu\leq \sigma}\max_{|\iota|=\nu}\|\partial^{\iota}f\|_{C(\Omega)}
        \end{equation*}

        \begin{lemma}\label{1dpoincare} (One-dimensional Poincare inequality with trace term)
            The following inequality holds
            \begin{equation*}
                \|y\|^p_{L^p((a;b))}\leq \frac{b-a}{2}\left(y(a)+y(b)\right)^p+2^{p-2}(b-a)^p\|y'\|^{p}_{L^p((a;b))}
            \end{equation*} 
        \end{lemma}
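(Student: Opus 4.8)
The plan is to reduce the inequality to a pointwise bound on $|y(x)|$ obtained by applying the fundamental theorem of calculus from \emph{both} endpoints, and then to integrate in $x$. First I would record that it suffices to treat $y\in C^1([a;b])$ and pass to the general case $y\in W^{1,p}((a;b))$ by density, since in one dimension such a function has an absolutely continuous representative, the endpoint values $y(a),y(b)$ are controlled by the Sobolev embedding $W^{1,p}((a;b))\hookrightarrow C([a;b])$, and both sides of the claimed inequality are continuous with respect to the $W^{1,p}$ norm. For $y\in C^1([a;b])$ and every $x\in[a;b]$ one has the two representations
\begin{equation*}
    y(x)=y(a)+\int_a^x y'(s)\,ds=y(b)-\int_x^b y'(s)\,ds .
\end{equation*}

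Adding these gives $2y(x)=\bigl(y(a)+y(b)\bigr)+\bigl(\int_a^x y'-\int_x^b y'\bigr)$, and since $\bigl|\int_a^x y'-\int_x^b y'\bigr|\le\int_a^b|y'(s)|\,ds$, this yields the pointwise estimate
\begin{equation*}
    |y(x)|\le\Bigl|\tfrac{y(a)+y(b)}{2}\Bigr|+\tfrac12\int_a^b|y'(s)|\,ds .
\end{equation*}
Next I would raise this to the power $p$ using the elementary convexity bound $(\alpha+\beta)^p\le 2^{p-1}(\alpha^p+\beta^p)$ for $\alpha,\beta\ge0$, which attaches a factor $2^{p-1}\cdot 2^{-p}=\tfrac12$ to each of the two resulting terms, and then bound the derivative term by H\"older's inequality, $\bigl(\int_a^b|y'|\bigr)^p\le(b-a)^{p-1}\|y'\|_{L^p((a;b))}^p$. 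The outcome is
\begin{equation*}
    |y(x)|^p\le\tfrac12\,|y(a)+y(b)|^p+\tfrac12\,(b-a)^{p-1}\|y'\|_{L^p((a;b))}^p .
\end{equation*}

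Finally I would integrate this over $x\in(a;b)$: the left-hand side becomes $\|y\|_{L^p((a;b))}^p$, the first term on the right contributes $\tfrac{b-a}{2}|y(a)+y(b)|^p$, and the second contributes $\tfrac{(b-a)^p}{2}\|y'\|_{L^p((a;b))}^p$. Since $\tfrac12\le 2^{p-2}$ for every $p\ge1$, the stated inequality follows (when $y$ is nonnegative, as in the applications, the absolute value around $y(a)+y(b)$ may simply be dropped). I do not anticipate any genuine obstacle here: the proof is essentially a one-line computation once the two-sided representation is written down. The only points that require a little care are the constant bookkeeping in the convexity and H\"older steps so as to land on precisely the coefficients in the statement, and the a priori regularity, i.e.\ justifying the endpoint representations for a merely $W^{1,p}$ function, which is handled by the absolutely continuous representative together with the density of $C^1([a;b])$.
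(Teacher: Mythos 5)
Your proof is correct. The paper states Lemma \ref{1dpoincare} in the appendix without any proof, so there is nothing to compare against; your argument --- writing $y(x)$ via the fundamental theorem of calculus from both endpoints, averaging the two representations, applying the convexity bound $(\alpha+\beta)^p\le 2^{p-1}(\alpha^p+\beta^p)$ and H\"older's inequality, then integrating in $x$ --- is complete and standard, and the density/absolute-continuity remarks handle the regularity issue adequately. In fact you obtain the sharper coefficient $\tfrac{1}{2}(b-a)^p$ on the gradient term, which dominates the stated $2^{p-2}(b-a)^p$ only at $p=1$ and is strictly better for $p>1$, so the lemma as stated follows a fortiori. The one caveat, which you correctly flag, is that the right-hand side of the statement should be read with $|y(a)+y(b)|^p$ (or with $y\ge 0$, as in the paper's only invocation of the lemma in Example \ref{thirdderiv}, where it is applied to $\Xi(y)=|y|^{p/3}$); for sign-changing $y$ and non-integer $p$ the literal expression $(y(a)+y(b))^p$ is not even well defined.
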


        \begin{lemma}\label{traceinequality} (Trace inequality on parallelepiped)
            Let $\Omega=\prod_{j=1}^{m}[a_j;b_j]$, then 
            \begin{equation*}
                \left\|y\Big|_{x_j=x_{0,j}}\right\|_{L^p(\Omega\setminus[a_j;b_j])}\leq \frac{2^{\frac{p-1}{p}}}{(b_j-a_j)^{\frac{1}{p}}}\max\{1,(b_j-a_j)\}\|y\|_{W^{1,p}(\Omega)}
            \end{equation*}
        \end{lemma}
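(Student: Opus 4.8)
The plan is to reduce the estimate to a one-dimensional inequality along the $x_j$-axis and then integrate over the remaining variables. Write $x=(x',x_j)$ where $x'\in\Omega':=\Omega\setminus[a_j;b_j]=\prod_{i\neq j}[a_i;b_i]$ ranges over the slice, and for fixed $x'$ set $g(s):=y(x',s)$ on $[a_j;b_j]$. By a standard density argument it suffices to prove the bound for $y\in C^1(\overline{\Omega})$, the general case following by approximation in $W^{1,p}(\Omega)$; this approximation is also what makes the trace $y|_{x_j=x_{0,j}}$ well-defined as an element of $L^p(\Omega')$.

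First I would apply the fundamental theorem of calculus: for every $t\in[a_j;b_j]$ we have $g(x_{0,j})=g(t)+\int_t^{x_{0,j}}g'(s)\,ds$, hence $|g(x_{0,j})|\le|g(t)|+\|g'\|_{L^1((a_j;b_j))}$. Raising to the $p$-th power with the elementary inequality $(A+B)^p\le 2^{p-1}(A^p+B^p)$, and then averaging the (constant in $t$) left-hand side over $t\in[a_j;b_j]$, gives
$$|g(x_{0,j})|^p\le 2^{p-1}\left(\frac{1}{b_j-a_j}\int_{a_j}^{b_j}|g(t)|^p\,dt+\|g'\|^p_{L^1((a_j;b_j))}\right).$$
The Hölder inequality bounds $\|g'\|^p_{L^1((a_j;b_j))}\le(b_j-a_j)^{p-1}\int_{a_j}^{b_j}|g'(s)|^p\,ds$, converting the $L^1$ term into an $L^p$ term at the cost of a power of the edge length.

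Next I would integrate this pointwise bound over $x'\in\Omega'$ and invoke Fubini to recognize $\int_{\Omega'}\int_{a_j}^{b_j}|g|^p=\|y\|^p_{L^p(\Omega)}$ and $\int_{\Omega'}\int_{a_j}^{b_j}|g'|^p=\|\partial_{x_j}y\|^p_{L^p(\Omega)}$, yielding
$$\left\|y|_{x_j=x_{0,j}}\right\|^p_{L^p(\Omega')}\le\frac{2^{p-1}}{b_j-a_j}\left(\|y\|^p_{L^p(\Omega)}+(b_j-a_j)^p\|\partial_{x_j}y\|^p_{L^p(\Omega)}\right).$$
Finally, bounding both $1$ and $(b_j-a_j)^p$ by $\max\{1,(b_j-a_j)\}^p$, dropping the remaining nonnegative derivative terms present in $\|y\|^p_{W^{1,p}(\Omega)}$, and taking the $p$-th root produces exactly the constant $2^{(p-1)/p}(b_j-a_j)^{-1/p}\max\{1,(b_j-a_j)\}$.

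The argument is essentially routine; the only points requiring care are the constant bookkeeping and the meaning of the trace. Specifically, isolating the factor $(b_j-a_j)^{-1}$ and then collecting both the unit term and $(b_j-a_j)^p$ under a single $\max\{1,(b_j-a_j)\}^p$ is precisely what produces the stated nonstandard constant, and one must not prematurely optimize (e.g.\ via Young's inequality), since that would spoil the clean form. The well-definedness of the slice restriction is handled by the density of $C^1(\overline{\Omega})$ in $W^{1,p}(\Omega)$. I do not anticipate a substantive obstacle beyond these.
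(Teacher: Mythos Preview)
Your argument is correct and is the standard route to such a trace bound: apply the fundamental theorem of calculus along the $x_j$-direction, average in the free point $t$, use H\"older to pass from $L^1$ to $L^p$ on the derivative term, integrate over the transverse variables via Fubini, and then crudely bound the coefficients $1$ and $(b_j-a_j)^p$ by $\max\{1,(b_j-a_j)\}^p$. The constant bookkeeping checks out and yields exactly $2^{(p-1)/p}(b_j-a_j)^{-1/p}\max\{1,(b_j-a_j)\}$.

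Note that the paper itself states Lemma~\ref{traceinequality} in the appendix as an auxiliary result \emph{without proof}, so there is no ``paper's own proof'' to compare against; your derivation is precisely the natural one and would serve as the omitted argument.
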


        \begin{lemma}\label{midpointrule}(Midpoint rule)
            Let $M\in\mathbb{N}$ and $M^{\frac{1}{m}}\in\mathbb{N}$, $\Omega=\prod_{j=1}^{m}[a_j;b_j]$, and dividing each $[a_j;b_j]$ into $M^{\frac{1}{m}}$ equal parts, we obtain set of centers $\mathcal{X}\subset\Omega$. Then 
            \begin{equation*}
                \left|\int_{\Omega}y(x)dx-\frac{\prod_{j=1}^{m}(b_j-a_j)}{M}\sum_{x_0\in\mathcal{X}}y(x_0)\right|\leq \frac{\prod_{j=1}^{m}(b_j-a_j)}{24M^{\frac{2}{m}}}\sum_{j=1}^{m}(b_j-a_j)^2\left\|\partial^2_{x_j}y\right\|_{C(\Omega)}
            \end{equation*}
        \end{lemma}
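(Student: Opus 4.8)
The plan is to reduce the $m$‑dimensional quadrature error to the classical one‑dimensional midpoint estimate, applied one coordinate at a time on each of the $M$ congruent sub‑boxes of the partition, and then to sum the resulting local bounds.

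First I would record the one‑dimensional fact: if $g\in C^2([\alpha,\beta])$, $h=\beta-\alpha$, and $\mu=\tfrac{\alpha+\beta}{2}$, then writing $g(x)=g(\mu)+g'(\mu)(x-\mu)+\tfrac12 g''(\xi_x)(x-\mu)^2$ and integrating over $[\alpha,\beta]$, the linear term vanishes by symmetry of the interval about $\mu$, whence
\[
\Bigl|\int_\alpha^\beta g\,dx-h\,g(\mu)\Bigr|\le \tfrac12\|g''\|_{C([\alpha,\beta])}\int_\alpha^\beta(x-\mu)^2dx=\tfrac{h^3}{24}\|g''\|_{C([\alpha,\beta])}.
\]
Next, fix one sub‑box $Q=\prod_{j=1}^m[\alpha_j;\beta_j]$ with center $c=(c_1,\dots,c_m)$ and edge lengths $h_j=\beta_j-\alpha_j$. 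For $0\le k\le m$ set
\[
I_k:=\Bigl(\prod_{j=1}^k h_j\Bigr)\int_{\alpha_{k+1}}^{\beta_{k+1}}\!\!\cdots\!\!\int_{\alpha_m}^{\beta_m} y(c_1,\dots,c_k,x_{k+1},\dots,x_m)\,dx_m\cdots dx_{k+1},
\]
so that $I_0=\int_Q y\,dx$ and $I_m=|Q|\,y(c)$. The difference $I_{k-1}-I_k$ is, under the integral over the remaining variables $x_{k+1},\dots,x_m$, exactly the one‑dimensional midpoint error in the variable $x_k$ with $x_1,\dots,x_{k-1}$ frozen at $c_1,\dots,c_{k-1}$; applying the 1D estimate gives $|I_{k-1}-I_k|\le \tfrac{1}{24}\bigl(\prod_{j\ne k}h_j\bigr)h_k^3\,\|\partial^2_{x_k}y\|_{C(Q)}=\tfrac{1}{24}h_k^2\bigl(\prod_j h_j\bigr)\|\partial^2_{x_k}y\|_{C(Q)}$. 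Telescoping,
\[
\Bigl|\int_Q y\,dx-|Q|\,y(c)\Bigr|\le\sum_{k=1}^m|I_{k-1}-I_k|\le\frac{\prod_j h_j}{24}\sum_{k=1}^m h_k^2\,\|\partial^2_{x_k}y\|_{C(Q)}.
\]

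Finally I would sum over the $M=(M^{1/m})^m$ sub‑boxes. Since each $[a_j;b_j]$ is cut into $M^{1/m}$ equal pieces, every sub‑box has edge $h_j=(b_j-a_j)/M^{1/m}$, hence $h_j^2=(b_j-a_j)^2/M^{2/m}$ and $\prod_j h_j=\prod_j(b_j-a_j)/M$; moreover $\sum_Q|Q|\,y(c_Q)=\frac{\prod_j(b_j-a_j)}{M}\sum_{x_0\in\mathcal X}y(x_0)$. Bounding $\|\partial^2_{x_k}y\|_{C(Q)}\le\|\partial^2_{x_k}y\|_{C(\Omega)}$ and using that there are exactly $M$ boxes collapses the factor $M$ and yields the claimed inequality. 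The computation is essentially routine; the only point needing care is the bookkeeping in the telescoping sum — tracking which coordinates are frozen at box centers versus still being integrated — together with the mesh‑size accounting that produces the factor $M^{-2/m}$. No genuine obstacle is expected.
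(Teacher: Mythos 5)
Your proof is correct: the one-dimensional midpoint estimate with constant $h^3/24$, the coordinate-wise telescoping on each sub-box (which correctly produces only the pure second partials $\partial^2_{x_j}y$, matching the statement), and the summation over the $M$ congruent cells with $h_j=(b_j-a_j)/M^{1/m}$ reproduce the stated bound exactly. The paper states this lemma in its appendix of auxiliary results without proof, so there is nothing to compare against; your argument is the standard one and fills that gap.
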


        \begin{lemma}\label{pc2norm}
            Let $p\geq2$, then
            \begin{equation*}
                \left\|(|y|^p)''\right\|_{C([a;b])}\leq  p^2\|y\|^p_{C^2([a;b])}
            \end{equation*}
        \end{lemma}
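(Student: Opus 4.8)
The plan is to compute $(|y|^p)''$ explicitly on the set where $y\neq0$, bound it there by elementary inequalities, and then verify that the bound persists — together with continuity of the second derivative — across the zeros of $y$. Put $K:=\|y\|_{C^2([a;b])}$; if $K=0$ then $y\equiv0$ and there is nothing to prove, so assume $K>0$. The function $t\mapsto|t|^p$ is $C^2$ away from $t=0$, so on the open subset of $[a;b]$ where $y\neq0$ the chain rule gives
\begin{equation*}
    (|y|^p)'=p|y|^{p-2}yy',\qquad (|y|^p)''=p|y|^{p-2}\bigl((p-1)(y')^2+yy''\bigr).
\end{equation*}
Since $p\geq2$ we have $|y|^{p-2}\leq K^{p-2}$ everywhere, together with $|y'|\leq K$ and $|y|\,|y''|\leq K^2$; hence on $\{y\neq0\}$,
\begin{equation*}
    \bigl|(|y|^p)''\bigr|\leq pK^{p-2}\bigl((p-1)K^2+K^2\bigr)=p^2K^p=p^2\|y\|^p_{C^2([a;b])}.
\end{equation*}

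Next I would deal with the zeros of $y$. For $p=2$ this is immediate: $|y|^2=y^2\in C^2([a;b])$ and $(y^2)''=2(y')^2+2yy''$ is precisely the formula above, which already obeys the stated bound at every point. For $p>2$ I would show that $|y|^p\in C^2([a;b])$ with $(|y|^p)''$ extended by $0$ on $\{y=0\}$. Indeed, at a zero $x_0$ of $y$ one has $|y(x)|=|y(x)-y(x_0)|\leq K|x-x_0|$, so $\bigl||y(x)|^{p-2}y(x)y'(x)\bigr|\leq K^{p-1}|x-x_0|^{p-1}$; dividing by $|x-x_0|$ and using $p>2$ shows the difference quotient of $(|y|^p)'$ at $x_0$ tends to $0$, i.e.\ $(|y|^p)'$ is differentiable at $x_0$ with derivative $0$. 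Moreover, as $x\to x_0$ with $y(x)\neq0$, the expression $p|y|^{p-2}\bigl((p-1)(y')^2+yy''\bigr)$ tends to $0$ because $|y(x)|^{p-2}\to0$ while the bracket stays bounded; hence $(|y|^p)''$ is continuous on $[a;b]$. At zeros it vanishes, so it trivially satisfies the bound, and at non-zeros it satisfies it by the first paragraph. Taking the supremum over $[a;b]$ gives the claim.

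I expect the only genuine subtlety to be the $C^2$-regularity of $|y|^p$ at the zeros of $y$ — and this is also where $p\geq2$ is needed, since for $1<p<2$ the second derivative of $|t|^p$ is already unbounded near $t=0$; the elementary estimate $|y(x)|\leq K|x-x_0|$ near a zero, combined with $p>2$, handles it cleanly. Alternatively one can regularise $|t|^p$ by $(t^2+\varepsilon)^{p/2}$, obtain a $C^2$-bound uniform in $\varepsilon\in(0,1]$ via $(p-1)y^2+\varepsilon\leq(p-1)(y^2+\varepsilon)$, and pass to the limit $\varepsilon\to0$. Everything else is a routine chain-rule computation followed by a triangle inequality.
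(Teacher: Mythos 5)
Your argument is correct. The paper states Lemma~\ref{pc2norm} in the appendix without any proof, so there is nothing to compare against; your computation $(|y|^p)''=p|y|^{p-2}\bigl((p-1)(y')^2+yy''\bigr)$ on $\{y\neq0\}$ together with the bound $pK^{p-2}\cdot pK^2=p^2K^p$ is surely the intended argument. Two cosmetic remarks: in the estimate near a zero the bound should read $\bigl||y(x)|^{p-2}y(x)y'(x)\bigr|\leq K^{p}|x-x_0|^{p-1}$ (you dropped the factor $K$ coming from $|y'|$), which changes nothing; and the whole discussion of the zeros of $y$ can be shortcut by observing that for $p\geq2$ the function $g(t)=|t|^p$ is itself $C^2(\mathbb{R})$ with $g''(t)=p(p-1)|t|^{p-2}$ continuous (and vanishing at $0$ when $p>2$), so $|y|^p=g\circ y\in C^2([a;b])$ and your formula holds at every point by the chain rule alone.
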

\end{appendices}

\bibliographystyle{unsrtnat}

\end{document}